\documentclass[leqno,12pt]{article} 
\setlength{\textheight}{23cm} \setlength{\textwidth}{16cm}
\setlength{\oddsidemargin}{0cm} \setlength{\evensidemargin}{0cm}
\setlength{\topmargin}{0cm}
\usepackage{amsmath, amssymb}
\usepackage{amsthm} 
\usepackage{pictexwd,dcpic}
\usepackage{xfrac}
\usepackage{xypic}

%
%
%
\theoremstyle{plain} 
\newtheorem{theorem}{\indent\sc Theorem}[section]
\newtheorem{lemma}[theorem]{\indent\sc Lemma}
\newtheorem{corollary}[theorem]{\indent\sc Corollary}
\newtheorem{proposition}[theorem]{\indent\sc Proposition}

\theoremstyle{definition} 

\newtheorem{remark}[theorem]{\indent\sc Remark}
\newtheorem{example}[theorem]{\indent\sc Example}
\newtheorem{notation}[theorem]{\indent\sc Notation}

%

%
\numberwithin{equation}{section}

\def\F{\mathfrak{f}}
\def\P{\mathfrak{p}}

\def\o{\omega}

\def\G{\Gamma}
\def\O{\mathcal{O}}

\makeatletter
\def\address#1#2{\begingroup
\noindent\parbox[t]{7.8cm}{%
\small{\scshape\ignorespaces#1}\par\vskip1ex
\noindent\small{\itshape E-mail address}%
\/: #2\par\vskip4ex}\hfill%
\endgroup}%

\makeatother
%
\title{\uppercase{Construction of ray class fields by smaller generators and applications}} 
\author{
\textsc{Ja Kyung Koo and Dong Sung Yoon} 
}

\date{} 
%

\begin{document}

\maketitle

\footnote{ 
2010 \textit{Mathematics Subject Classification}. 11R37,  11G15, 11G16. }
\footnote{ 
\textit{Key words and phrases}. class field theory, complex multiplication, elliptic and modular units} \footnote{
\thanks{
The first named author was supported by the NRF of Korea grant funded by the Korea government (MISP) (No.2014001824).
The second named author was supported by the National Institute for Mathematical Sciences, Republic of Korea.
} }

\begin{abstract}
We first generate ray class fields over imaginary quadratic fields in terms of Siegel-Ramachandra invariants, which would be an extension of Schertz's result \cite{Schertz}.
And, by making use of quotients of Siegel-Ramachandra invariants we also construct ray class invariants over imaginary quadratic fields whose minimal polynomials have relatively small coefficients, from which we are able to solve certain quadratic Diophantine equations.

\end{abstract}

\tableofcontents

\maketitle

\section{Introduction}

Let $K$ be an imaginary quadratic field, $\mathfrak{f}$ be a nonzero integral ideal of $K$ and 
$\mathrm{Cl}(\F)$ be the ray class group of $K$ modulo $\mathfrak{f}$.
Then by class field theory there exists a unique abelian extension of $K$ whose Galois group is isomorphic to $\mathrm{Cl}(\F)$ via the Artin map
\begin{equation}\label{artin map}
\sigma:\mathrm{Cl}(\F)\xrightarrow{~\sim~} \mathrm{Gal}(K_{\F}/K).
\end{equation}
We call it the \textit{ray class field} of $K$ modulo $\mathfrak{f}$ which is denoted by $K_\mathfrak{f}$.
Since any abelian extension of $K$ is contained in some ray class field $K_\mathfrak{f}$, 
generation of ray class fields of $K$ is the key step toward the Hilbert's 12th problem.
In 1964, Ramachandra (\cite[Theorem 10]{Ramachandra}) constructed a primitive generator of $K_\F$ over $K$ by applying the Kronecker limit formula.
However, his invariants involve products of high powers of singular values of the Klein forms and the discriminant $\Delta$-function, which are quite complicated to use in practice.
On the other hand, Schertz tried to find rather simpler generators of $K_\F$ over $K$ for practical use.
And, he conjectured that the Siegel-Ramachandra invariants would be the right answer and gave a conditional proof (\cite[Theorem 3 and 4]{Schertz}).
\par
In this paper we shall first generate ray class fields $K_\F$ over $K$ via Siegel-Ramachandra invariants by improving Schertz's idea (Theorem \ref{main theorem}).
And, by making use of quotient of Siegel-Ramachandra invariants we shall also construct a primitive generator of $K_\F$ over $K$ whose minimal polynomial has relatively small coefficients and present several examples 
(Theorem \ref{main theorem2}, Remark \ref{additional case} and Example \ref{first example}, \ref{even example}).
This ray class invariant becomes a real algebraic integer with certain conditions (Lemma \ref{real generator} and Theorem \ref{real ray class invariant}).
Lastly,  we will apply the real ray class invariant to solving certain quadratic Diophantine equations 
(Theorem \ref{real ray class invariant} and Example \ref{last example}, \ref{last example2}).

\begin{notation}
For $z\in\mathbb{C}$ we denote by $\overline{z}$ the complex conjugate of $z$ and by Im$(z)$ the imaginary part of $z$, and put $q_z=e^{2\pi iz}$.
If $G$ is a group and $g_1,g_2,\ldots,g_r$ are elements of $G$, let $\langle g_1,g_2,\ldots,g_r \rangle$ be the subgroup of $G$ generated by $g_1,g_2,\ldots,g_r$, and $G^n$ be the subgroup $\{g^n~|~g\in G \}$ of G for $n\in\mathbb{Z}_{>0}$.
Moreover, if $H$ is a subgroup of $G$ and $g\in G$, we mean by $[g]$ the coset $gH$ of $H$ in $G$.
The transpose of a matrix $\alpha$ is denoted by ${^t}\alpha$.
If $R$ is a ring with identity, $R^\times$ indicates the group of all invertible elements of $R$.
For a number field $K$, let $\O_K$ be the ring of algebraic integers of $K$ and $d_K$ be the discriminant of $K$.
If $a\in\O_K$, we denote by $(a)$ the principal ideal of $K$ generated by $a$.
When $\mathfrak{a}$ is an integral ideal of $K$, we mean by $\mathcal{N}(\mathfrak{a})$ the absolute norm of an ideal $\mathfrak{a}$.
For a positive integer $N$, we let $\zeta_N=e^{2\pi i/N}$ be a primitive $N$-th root of unity.
\end{notation}

\section{Shimura's reciprocity law}

We shall review an algorithm for finding all conjugates of the special value of a modular function over an imaginary quadratic field by utilizing Shimura's reciprocity law.
\par
For a lattice $L$ in $\mathbb{C}$, the \textit{Weierstrass $\P$-function} is defined by
\begin{equation*}
\P(z;L)=\frac{1}{z^2}+\sum_{\o\in L\setminus\{0\}}\left(\frac{1}{(z-w)^2}-\frac{1}{w^2} \right)\quad(z\in\mathbb{C}) .
\end{equation*}
Let 
$\mathbb{H}=\{z\in \mathbb{C}~|~\mathrm{Im}(z)>0\}$ be the complex upper half-plane.
For $\tau\in\mathbb{H}$, we let
\begin{eqnarray*}
g_2(\tau)=60\sum_{\o\in[\tau,1]\setminus\{0\}}\frac{1}{w^4},\quad g_3(\tau)=140\sum_{\o\in[\tau,1]\setminus\{0\}}\frac{1}{w^6},\quad
\Delta(\tau)=g_2(\tau)^3-27g_3(\tau)^2.
\end{eqnarray*}
Then the \textit{$j$-invariant} is defined by
\begin{equation*}
j(\tau)=1728\frac{g_2(\tau)}{\Delta(\tau)}\quad (\tau\in\mathbb{H}).
\end{equation*}
For a rational vector $\left[\begin{matrix}r_1\\r_2\end{matrix}\right]\in\mathbb{Q}^2\setminus\mathbb{Z}^2$, we define the \textit{Fricke function} by
\begin{equation*}
f_{\left[\begin{smallmatrix}r_1\\r_2\end{smallmatrix}\right]}(\tau)=-2^7 3^5\frac{g_2(\tau)g_3(\tau)}{\Delta(\tau)}\P(r_1\tau+r_2;[\tau,1])\qquad(\tau\in\mathbb{H}).
\end{equation*}
And, for a positive integer $N$, let
\begin{eqnarray*}
\G(N)&=&\left\{\left[
\begin{matrix}
a&b\\
c&d
\end{matrix}\right] \in SL_{2}(\mathbb{Z})~\big|~
\left[\begin{matrix}
a&b\\
c&d
\end{matrix}\right]
\equiv 
\left[\begin{matrix}
1&0\\
0&1
\end{matrix}\right]
(\bmod{N}) \right\}\\
\mathcal{F}_N&=&\mathbb{Q}\left(j(\tau),f_{\left[\begin{smallmatrix}r_1\\r_2\end{smallmatrix}\right]}(\tau) : \left[\begin{matrix}r_1\\r_2\end{matrix}\right]\in\frac{1}{N}\mathbb{Z}^2\setminus\mathbb{Z}^2\right).
\end{eqnarray*}
We call $\mathcal{F}_N$ the \textit{modular function field of level $N$} over $\mathbb{Q}$.
Then the function field $\mathbb{C}(X(N))$ on the modular curve $X(N)=\G(N)\backslash(\mathbb{H}\cup \mathbb{P}^1(\mathbb{Q}))$ is equal to $\mathbb{C}\mathcal{F}_N$, and $\mathcal{F}_N$ consists of all functions in $\mathbb{C}(X(N))$ whose Fourier coefficients lie in the cyclotomic field $\mathbb{Q}(\zeta_N)$ (\cite[Chapter 6 \S3]{Lang}).
As is well-known, $\mathcal{F}_N$ is a Galois extension of $\mathcal{F}_1=\mathbb{Q}(j(\tau))$ and 
\begin{equation}\label{decomposition}
\mathrm{Gal}(\mathcal{F}_N/\mathcal{F}_1)\cong \mathrm{GL}_2(\mathbb{Z}/N\mathbb{Z})/\{\pm I_2\}\cong G_N\cdot \mathrm{SL}_2(\mathbb{Z}/N\mathbb{Z})/\{\pm I_2\},
\end{equation}
where 
\begin{equation*}
G_N=\left\{\left[
\begin{matrix}
1&0\\
0&d
\end{matrix}\right]~|~d\in(\mathbb{Z}/N\mathbb{Z})^\times
\right\} .
\end{equation*}
More precisely, the element 
$\left[
\begin{matrix}
1&0\\
0&d
\end{matrix}\right]\in G_N$ acts on $\mathcal{F}_N$ by
\begin{equation}\label{diagonal action}
\sum_{n\gg-\infty}c_n q_\tau^{\frac{n}{N}}\longmapsto \sum_{n\gg-\infty}c_n^{\sigma_d} q_\tau^{\frac{n}{N}},
\end{equation}
where $\sum_{n\gg-\infty}c_n q_\tau^{\frac{n}{N}}$ is the Fourier expansion of a function in $\mathcal{F}_N$ and 
$\sigma_d\in\mathrm{Gal}(\mathbb{Q}(\zeta_N)/\mathbb{Q})$ satisfies $\zeta_N^{\sigma_d}=\zeta_N^d$.
And, $\gamma\in \mathrm{SL}_2(\mathbb{Z}/N\mathbb{Z})/\{\pm I_2\}$ acts on $h\in\mathcal{F}_N$ by 
\begin{equation*}
h^\gamma(\tau)=h(\widetilde{\gamma}\tau), 
\end{equation*}
where $\widetilde{\gamma}$ is a preimage of $\gamma$ of the reduction $\mathrm{SL}_2(\mathbb{Z})\rightarrow \mathrm{SL}_2(\mathbb{Z}/N\mathbb{Z})/\{\pm I_2\}$
(\cite[Chapter 6, Theorem 3]{Lang}).
\par

Now let $K$ be an imaginary quadratic field of discriminant $d_K$ and set
\begin{equation}\label{theta}
\theta=\left\{
\begin{array}{ll}
\displaystyle\frac{\sqrt{d_K}}{2}&\textrm{if $d_K\equiv 0\pmod{4}$}\vspace{0.2cm}\\
\displaystyle\frac{-1+\sqrt{d_K}}{2}&\textrm{if $d_K\equiv 1\pmod{4}$}
\end{array}\right.
\end{equation}
so that $\O_K=\mathbb{Z}[\theta]$.
Then its minimal polynomial over $\mathbb{Q}$ is
\begin{equation*}
\textrm{min}(\theta,\mathbb{Q})=X^2+B_\theta X+C_\theta=\left\{
\begin{array}{ll}
X^2-d_K/4&\textrm{if $d_K\equiv 0\pmod{4}$}\vspace{0.2cm}\\
X^2+X+(1-d_K)/4&\textrm{if $d_K\equiv 1\pmod{4}$}.
\end{array}
\right.
\end{equation*}

\begin{proposition}\label{complex multiplication}
When $\F=N\O_K$ for a positive integer $N$, we have
\begin{equation*}
K_{\F}=K_{(N)}=K(h(\theta):~h\in\mathcal{F}_{N,\theta})
\end{equation*}
where $\mathcal{F}_{N,\theta}=\{h\in\mathcal{F}_N~|~ \textrm{$h$ is defined and finite at $\theta$}\} $.
If $N=1$, then $K_{(1)}$ is nothing but the Hilbert class field of $K$.
\end{proposition}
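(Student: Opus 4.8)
The plan is to derive the proposition from the main theorem of complex multiplication. First, the equality $K_\F=K_{(N)}$ is a triviality: $\F=N\O_K=(N)$ as ideals, and the ray class group — hence the ray class field — depends only on the ideal. It remains to prove that $L:=K\bigl(h(\theta):h\in\mathcal{F}_{N,\theta}\bigr)$ equals $K_{(N)}$ and, for $N=1$, to identify $K_{(1)}$ with the Hilbert class field of $K$. The relevant geometry: since $\O_K=[\theta,1]$, the elliptic curve $E=\mathbb{C}/\O_K$ has complex multiplication by $\O_K$, its $j$-invariant is $j(\theta)$, and its $N$-torsion is $E[N]=\tfrac1N\O_K/\O_K$, whose points are represented by $r_1\theta+r_2$ with $(r_1,r_2)\in\tfrac1N\mathbb{Z}^2\setminus\mathbb{Z}^2$. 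Note also that $j$ and the Fricke functions lie in $\mathcal{F}_{N,\theta}$: none of them has a pole at $\theta$, since $\theta\in\mathbb{H}$ is not a cusp, $\Delta(\theta)\neq0$, and $r_1\theta+r_2\notin[\theta,1]$ whenever $(r_1,r_2)\notin\mathbb{Z}^2$ (because $1,\theta$ are linearly independent over $\mathbb{Q}$).

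For the inclusion $L\subseteq K_{(N)}$ I would invoke Shimura's reciprocity law (reviewed below; see also \cite{Lang}): the absolute Galois group of $K$ acts on $h(\theta)$, for each $h\in\mathcal{F}_{N,\theta}$, through its quotient $\mathrm{Gal}(K_{(N)}/K)$, so every such value lies in $K_{(N)}$ and $L/K$ is abelian. For the reverse inclusion it suffices to show that $\mathrm{Gal}(K_{(N)}/L)$ is trivial, i.e. that an integral ideal $\mathfrak c$ of $\O_K$ prime to $N$ whose Artin automorphism $\sigma(\mathfrak c)$ fixes $L$ pointwise must lie in the subgroup $P_{(N)}$ of principal ideals admitting a generator $\equiv1\pmod{N\O_K}$. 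Since $j(\theta)\in L$, the first main theorem gives $j(\theta)^{\sigma(\mathfrak c)}=j(\mathfrak c^{-1})$ ($j$ evaluated at the fractional ideal $\mathfrak c^{-1}$), and $j(\mathfrak c^{-1})=j(\theta)$ forces $\mathfrak c^{-1}$, hence $\mathfrak c=(\mu)$, to be principal. A Weber function $h_E$ of $E$ (a generator of the function field of $E/\mathrm{Aut}(E)$ over $K(j(\theta))$) is, up to a nonzero constant, realized by Fricke functions — and, in the exceptional cases noted below, by other members of $\mathcal{F}_{N,\theta}$ — so the values of $h_E$ on $E[N]$ lie in $L$ and are fixed by $\sigma(\mathfrak c)$; the second main theorem then shows that multiplication by $\mu$ on $E[N]\cong\O_K/N\O_K$ agrees, modulo $\mathrm{Aut}(E)=\O_K^\times$, with the identity. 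As $E[N]$ is a cyclic $\O_K$-module this means $\mu\equiv\epsilon\pmod{N\O_K}$ for some $\epsilon\in\O_K^\times$; replacing $\mu$ by $\epsilon^{-1}\mu$, which generates the same ideal, we get $\mathfrak c=(\epsilon^{-1}\mu)\in P_{(N)}$, whence $\sigma(\mathfrak c)=\mathrm{id}$. Thus $L=K_{(N)}$. When $N=1$, $P_{(1)}$ is the full group of principal ideals, so the relevant class group is the ordinary ideal class group of $K$ and $K_{(1)}=K(j(\theta))$ is the Hilbert class field of $K$.

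The deep ingredient — both the assertion that the entire Galois orbit of $h(\theta)$ stays inside $K_{(N)}$ and the explicit formulas for the action of $\sigma(\mathfrak c)$ on $j(\theta)$ and on the values of $h_E$ at $E[N]$ — is the main theorem of complex multiplication, which I would quote rather than reprove; this is the step I expect to dominate the argument. The one genuinely delicate bookkeeping point is the pair of exceptional fields $d_K=-4$ and $d_K=-3$: there $g_3(\theta)$, resp. $g_2(\theta)$, vanishes, so all Fricke functions vanish at $\theta$ and carry no information about $E[N]$, and one must instead exhibit in $\mathcal{F}_{N,\theta}$ modular functions whose values at $\theta$ realize a $\P^2$-, resp. $\P^3$-type Weber function of $E$ — for example $f^2/(j-1728)$ for a Fricke function $f$ when $d_K=-4$ — and then rerun the second-main-theorem step with that Weber function, using that it separates $E[N]$ modulo the full group $\O_K^\times$. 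It is exactly this degeneration that forces the proposition to quantify over all of $\mathcal{F}_{N,\theta}$ rather than merely over $j$ and the Fricke functions.
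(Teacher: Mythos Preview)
Your proposal is correct and is essentially the argument that sits behind the paper's proof, which is nothing more than the citation \cite[Chapter 10 \S1, Corollary]{Lang}: you have unpacked the two main theorems of complex multiplication (as in Lang or Shimura) to obtain both inclusions, and you correctly flag the $d_K\in\{-3,-4\}$ degenerations of the Fricke functions that force the statement to range over all of $\mathcal{F}_{N,\theta}$. There is nothing to compare beyond the level of detail---the paper defers entirely to the reference, while you sketch its content.
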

\begin{proof}
\cite[Chapter 10 \S1, Corollary]{Lang}.
\end{proof}

For a positive integer $N$ we define a subgroup $W_{N,\theta}$ of $\mathrm{GL}_2(\mathbb{Z}/N\mathbb{Z})$ by
\begin{equation*}
W_{N,\theta}=\left\{\left[
\begin{matrix}
t-B_\theta s& -C_\theta s\\
s& t
\end{matrix}\right]
\in\mathrm{GL}_2(\mathbb{Z}/N\mathbb{Z})~\big|~t,s\in\mathbb{Z}/N\mathbb{Z}   \right\} .
\end{equation*}
Then we have the following proposition.

\begin{proposition}\label{ray class conjugate}
We attain a surjective homomorphism
\begin{eqnarray*}
\varphi_{N,\theta}: W_{N,\theta}&\longrightarrow& \mathrm{Gal}(K_{(N)}/K_{(1)})\\
\alpha&\longmapsto& (h(\theta)\mapsto h^\alpha(\theta))_{h\in\mathcal{F}_{N,\theta}},
\end{eqnarray*}
whose kernel is
\begin{equation*}
\begin{array}{ll}
\left\{\pm\left[
\begin{matrix} 
1&0\\
0&1
\end{matrix}\right]
\right\}&\textrm{if $K\neq \mathbb{Q}(\sqrt{-1}), \mathbb{Q}(\sqrt{-3})$},\vspace{0.2cm}\\
\left\{\pm\left[
\begin{matrix} 
1&0\\
0&1
\end{matrix}\right],
\pm\left[\begin{matrix} 
0&-1\\
1&0
\end{matrix}\right]
\right\}&\textrm{if $K= \mathbb{Q}(\sqrt{-1})$},\vspace{0.2cm}\\
\left\{\pm\left[\begin{matrix} 
1&0\\
0&1
\end{matrix}\right],
\pm\left[\begin{matrix} 
-1&-1\\
1&0
\end{matrix}\right],
\pm\left[\begin{matrix} 
0&1\\
-1&-1
\end{matrix}\right]
\right\}&\textrm{if $K=\mathbb{Q}(\sqrt{-3})$}.\vspace{0.2cm}
\end{array}
\end{equation*}
\end{proposition}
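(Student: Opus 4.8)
The plan is to build the map $\varphi_{N,\theta}$ from Shimura's reciprocity law as recalled above, and then identify $W_{N,\theta}$ with the relevant decomposition group. First I would recall the general form of Shimura's reciprocity law: for a primitive $\tau\in\mathbb{H}$ generating $K$ over $\mathbb{Q}$ with minimal polynomial $X^2+B_\theta X+C_\theta$, and for an idele $x$ of $K$ lying over $\tau$, the Artin automorphism $(x,K)$ acts on $h(\theta)$ (for $h\in\mathcal{F}_{N,\theta}$) by $h(\theta)^{(x,K)}=h^{g_\theta(x)}(\theta)$, where $g_\theta\colon (\O_K/N\O_K)^\times\hookrightarrow \mathrm{GL}_2(\mathbb{Z}/N\mathbb{Z})$ is the map sending the class of $\alpha=t+s\theta$ to the matrix of multiplication by $\alpha$ on the basis $\{\theta,1\}$ of $\O_K/N\O_K$, which is precisely $\left[\begin{smallmatrix} t-B_\theta s & -C_\theta s\\ s & t\end{smallmatrix}\right]$. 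Thus the image of $g_\theta$ is exactly $W_{N,\theta}$. To make this well-defined I would first check that $W_{N,\theta}$ actually lands in $\mathrm{GL}_2(\mathbb{Z}/N\mathbb{Z})$: the determinant of the above matrix is $t^2+B_\theta ts+C_\theta s^2 = \mathcal{N}(t+s\theta)\bmod N$, which is a unit mod $N$ exactly when $t+s\theta\in(\O_K/N\O_K)^\times$, so $g_\theta$ is a well-defined group isomorphism onto $W_{N,\theta}$.

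Next I would produce the homomorphism $\varphi_{N,\theta}$. By Proposition \ref{complex multiplication} we have $K_{(N)}=K(h(\theta):h\in\mathcal{F}_{N,\theta})$, and the main theorem of complex multiplication gives that $(x,K)$ restricted to $K_{(N)}$ depends only on the image of $x$ in $(\O_K/N\O_K)^\times$ together with its archimedean/global principal part; composing the Artin map on ray classes with $g_\theta$ (suitably interpreted, so that $\mathrm{Gal}(K_{(N)}/K_{(1)})$ corresponds to classes trivial at the infinite places and at primes not dividing $N$) yields a surjection $W_{N,\theta}\twoheadrightarrow\mathrm{Gal}(K_{(N)}/K_{(1)})$, $\alpha\mapsto(h(\theta)\mapsto h^\alpha(\theta))$. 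Surjectivity comes from surjectivity of the Artin map together with the fact that every ideal class prime to $N$ has a representative that is an invertible element mod $N$; well-definedness (independence of the choice of representative) is exactly the content of Shimura's reciprocity applied to principal ideles.

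The remaining, and most delicate, step is the computation of $\ker\varphi_{N,\theta}$. An element $\alpha=t+s\theta\in(\O_K/N\O_K)^\times$ lies in the kernel iff $h^\alpha(\theta)=h(\theta)$ for all $h\in\mathcal{F}_{N,\theta}$, i.e.\ iff $\alpha$ acts trivially on all special values $h(\theta)$. Tracing through the isomorphism \eqref{decomposition} and the actions \eqref{diagonal action} and the $\mathrm{SL}_2$-action, this forces $g_\theta(\alpha)$ to fix $\theta$ under the fractional linear action up to $\pm I_2$ and to act trivially on the cyclotomic part; concretely $\alpha$ must come from a unit in $\O_K$ whose image mod $N$ is covered, i.e.\ $\alpha\in\O_K^\times\cdot(1+N\O_K)/(1+N\O_K)$, reflecting that principal ideles generated by roots of unity act trivially. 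For $K\neq\mathbb{Q}(\sqrt{-1}),\mathbb{Q}(\sqrt{-3})$ we have $\O_K^\times=\{\pm1\}$, giving the kernel $\{\pm I_2\}$; for $K=\mathbb{Q}(\sqrt{-1})$, $\O_K^\times=\langle i\rangle$ contributes the extra $\pm\left[\begin{smallmatrix}0&-1\\1&0\end{smallmatrix}\right]$ (the matrix of multiplication by $\theta=i$, since here $B_\theta=0$, $C_\theta=1$); and for $K=\mathbb{Q}(\sqrt{-3})$, $\O_K^\times=\langle\zeta_6\rangle$ with $\theta=\zeta_6-1$ a primitive cube root issue—here $B_\theta=1$, $C_\theta=1$—giving the two extra nontrivial matrices $\pm\left[\begin{smallmatrix}-1&-1\\1&0\end{smallmatrix}\right]$ and $\pm\left[\begin{smallmatrix}0&1\\-1&-1\end{smallmatrix}\right]$ as the images of multiplication by the primitive sixth and remaining units. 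The main obstacle is making this kernel computation rigorous: one must carefully verify, via Shimura's reciprocity for the principal idele attached to a root of unity $\epsilon\in\O_K^\times$, that $h^{g_\theta(\epsilon)}(\theta)=h(\theta)$ for all $h\in\mathcal{F}_{N,\theta}$—equivalently that $g_\theta(\epsilon)$ acts on $\mathcal{F}_N$ in a way that fixes every value at $\theta$—and conversely that no other classes do, which amounts to checking that the stabilizer of $\theta$ in the image of $W_{N,\theta}$ inside $\mathrm{GL}_2(\mathbb{Z}/N\mathbb{Z})/\{\pm I_2\}$, intersected with the condition of trivial cyclotomic action, consists precisely of the listed elements. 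I would handle this by reducing to the known description of $\mathrm{Gal}(\mathcal{F}_N/\mathcal{F}_1)$ in \eqref{decomposition} and the structure of $\O_K^\times$, treating the three cases separately.
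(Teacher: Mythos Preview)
The paper does not give its own proof of this proposition; it simply cites \cite[\S3]{Gee}, \cite[\S3]{Stevenhagen}, and \cite[Proposition 2.3]{Jung}. Your sketch is essentially the argument found in those references: identify $W_{N,\theta}$ with $(\O_K/N\O_K)^\times$ via the regular representation $t+s\theta\mapsto\left[\begin{smallmatrix}t-B_\theta s&-C_\theta s\\ s&t\end{smallmatrix}\right]$, invoke Shimura's reciprocity to get the action on $h(\theta)$, and read off the kernel as the image of $\O_K^\times$. This is correct in outline and matches the cited sources.

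Two small points worth tightening. First, your surjectivity remark (``every ideal class prime to $N$ has a representative that is an invertible element mod $N$'') is not quite right as stated: only \emph{principal} ray classes do, but this is exactly what is needed since the target is $\mathrm{Gal}(K_{(N)}/K_{(1)})$, which under the Artin map corresponds precisely to the subgroup $\{[(\omega)]:\omega\in\O_K\text{ prime to }N\}\subset\mathrm{Cl}(N\O_K)$. Second, in the $\mathbb{Q}(\sqrt{-3})$ case your description is slightly garbled: here $\theta=(-1+\sqrt{-3})/2$ is itself a primitive cube root of unity (so $\O_K^\times=\{\pm1,\pm\theta,\pm\theta^2\}$), and applying $g_\theta$ to $\theta$ and $\theta^2=-1-\theta$ gives exactly the two listed extra matrices. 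With these clarifications your argument is the standard one.
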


\begin{proof}
\cite[\S3]{Gee}, \cite[\S3]{Stevenhagen} or \cite[Proposition 2.3]{Jung}.
\end{proof}

On the other hand, observe that the subgroup
\begin{equation*}
\big\{[(\o)]\in \mathrm{Cl}(N\O_K)~|~ \textrm{$\o\in\O_K$ is prime to $N$} \big\}
\end{equation*}
is isomorphic to $\mathrm{Gal}(K_{(N)}/K_{(1)})$ via the Artin map $\sigma$ in (\ref{artin map}).

\begin{proposition}\label{principal class action}
Let $N$ be a positive integer and $\omega\in\O_K$  which is prime to $N$.
Write $\o=s\theta+t$ with $s,t\in\mathbb{Z}$.
Then, for $h\in\mathcal{F}_{N,\theta}$ we obtain
\begin{equation*}
h(\theta)^{\sigma([(\omega)])}=h^\alpha(\theta),
\end{equation*}
where $\alpha=\left[\begin{matrix}
t-B_\theta s& -C_\theta s\\
s& t
\end{matrix}\right]\in W_{N,\theta}$.
\end{proposition}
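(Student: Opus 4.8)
The plan is to deduce the proposition from the main theorem of complex multiplication (Shimura's reciprocity law) at the CM point $\theta$, the crux being that $\alpha$ is precisely the matrix describing multiplication by $\o$ on the ordered $\Q$-basis $(\theta,1)$ of $K$. Concretely, from $\theta^{2}=-B_\theta\theta-C_\theta$ one checks at once that, with $\o=s\theta+t$,
\[
\o\left[\begin{matrix}\theta\\1\end{matrix}\right]=\left[\begin{matrix}t-B_\theta s&-C_\theta s\\ s&t\end{matrix}\right]\left[\begin{matrix}\theta\\1\end{matrix}\right]=\alpha\left[\begin{matrix}\theta\\1\end{matrix}\right],
\]
so that the assignment $\xi\mapsto g_\theta(\xi)$, where $g_\theta(\xi)\in M_2(\Q)$ is the matrix determined by $\xi\left[\begin{smallmatrix}\theta\\1\end{smallmatrix}\right]=g_\theta(\xi)\left[\begin{smallmatrix}\theta\\1\end{smallmatrix}\right]$, is a ring embedding $g_\theta\colon K\hookrightarrow M_2(\Q)$ with $g_\theta(\o)=\alpha$. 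The map $(\O_K/N\O_K)^{\times}\to\mathrm{GL}_2(\mathbb Z/N\mathbb Z)$ induced by $g_\theta$ has image exactly $W_{N,\theta}$; and since $\o$ is prime to $N$ we have $\det\alpha=\o\overline{\o}=\mathcal N((\o))\in(\mathbb Z/N\mathbb Z)^{\times}$, so $\alpha\in W_{N,\theta}$ and $\varphi_{N,\theta}(\alpha)$ is a well-defined element of $\mathrm{Gal}(K_{(N)}/K_{(1)})$. Thus the content of the proposition is the identity $\varphi_{N,\theta}(\alpha)=\sigma([(\o)])$ in $\mathrm{Gal}(K_{(N)}/K_{(1)})$.

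I would then invoke Shimura's reciprocity law in adelic form (see \cite{Gee}, \cite{Stevenhagen}): extending $g_\theta$ to $g_\theta\colon K_{\mathbb A,f}^{\times}\to\mathrm{GL}_2(\mathbb A_{\Q,f})$ by $\widehat{\Q}$-linearity, one has
\[
h(\theta)^{(x^{-1},\,K^{\mathrm{ab}}/K)}=h^{g_\theta(x)}(\theta)\qquad(x\in K_{\mathbb A,f}^{\times},\ h\in\mathcal F_{N,\theta}),
\]
where $(\,\cdot\,,K^{\mathrm{ab}}/K)$ is the idelic Artin map; this is exactly the statement behind Proposition~\ref{ray class conjugate}, with $\varphi_{N,\theta}$ obtained from $g_\theta$ by reduction modulo $N$. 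It remains to exhibit an idele $x$ with $(x,K^{\mathrm{ab}}/K)|_{K_{(N)}}=\sigma([(\o)])$ and to compute $g_\theta(x^{-1})\bmod N$. Since $\o$ is prime to $N$, the ideal $(\o)$ is prime to $N\O_K$, and --- with the normalization that makes Proposition~\ref{ray class conjugate} valid --- its class in $\mathrm{Cl}(N\O_K)$ is represented by the finite idele $x$ whose component at every prime of $K$ dividing $N$ equals $\o$ and which is $1$ at all other primes: indeed $x$ differs from the principal idele $\o$ only at the primes above $N$, where $\o$ is a unit, so $x$ lies over $(\o)$, while the principal idele has trivial image under the Artin map. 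Feeding $x^{-1}$ into the displayed identity and using $g_\theta(x^{-1})_{\mathfrak p}=I$ for $\mathfrak p\nmid N$ together with $g_\theta(x^{-1})_{\mathfrak p}\equiv g_\theta(\o)=\alpha\pmod N$ for $\mathfrak p\mid N$, one obtains $g_\theta(x^{-1})\equiv\alpha\pmod N$, hence $h(\theta)^{\sigma([(\o)])}=h^{\alpha}(\theta)$ for all $h\in\mathcal F_{N,\theta}$, which is the assertion.

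The whole argument is carried by the cited reciprocity law; the part that needs care is the normalization bookkeeping. The reciprocity law, the idelic Artin map, and the map $g_\theta$ each involve a choice --- arithmetic versus geometric Frobenius, $g_\theta$ versus its transpose-inverse, $x$ versus $x^{-1}$ --- and these must be fixed coherently with the conventions that make Proposition~\ref{ray class conjugate} hold, so that the exponent produced is $\alpha$ and not $\alpha^{-1}$ or ${^t}\alpha$. A useful consistency check is the restriction to $\Q(\zeta_N)\subseteq K_{(N)}$: there $\sigma([(\o)])$ acts by $\zeta_N\mapsto\zeta_N^{\mathcal N((\o))}$, while by \eqref{decomposition} and \eqref{diagonal action} the matrix $\alpha$ acts on Fourier coefficients through $\sigma_{\det\alpha}$ with $\det\alpha=\o\overline{\o}=\mathcal N((\o))$, so the two sides agree on $\Q(\zeta_N)$ automatically, the remaining discrepancy living in $\mathrm{SL}_2$ --- precisely the part governed by complex multiplication. (No separate treatment of $\Q(\sqrt{-1})$ or $\Q(\sqrt{-3})$ is required: the extra elements of $\ker\varphi_{N,\theta}$ appearing in Proposition~\ref{ray class conjugate} merely express that $[(\o)]$ depends on $\o$ only modulo units.) Alternatively, one may avoid ideles entirely and work with the finite form of Shimura's reciprocity as in \cite{Stevenhagen}, \cite{Jung}, where Proposition~\ref{ray class conjugate} already encodes the dictionary between $W_{N,\theta}$ and $\mathrm{Gal}(K_{(N)}/K_{(1)})$, and the proposition reduces to identifying the image of the class $[(\o)]$ under that dictionary with the matrix of multiplication by $\o$.
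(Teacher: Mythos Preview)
The paper's own proof is the single citation \cite[Theorem 6.31]{Shimura}; no argument is written out. Your proposal supplies precisely the argument that this citation encodes---identify $\alpha$ as the matrix of multiplication by $\omega$ on the ordered basis $(\theta,1)$, and read off the Galois action from the adelic form of Shimura's reciprocity---so there is no divergence of method to compare.

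There is, however, a bookkeeping slip in your idele paragraph. Your idele $x$ (equal to $\omega$ at primes above $N$ and $1$ elsewhere) \emph{agrees} with the principal idele $\omega$ at the primes above $N$ and differs from it at the remaining primes, contrary to what you write. Tracing this through, the class $[(\omega)]\in\mathrm{Cl}(N\O_K)$ is represented by $x^{-1}$, not by $x$; and in the next line $g_\theta(x^{-1})_{\mathfrak p}=g_\theta(\omega^{-1})=\alpha^{-1}$ for $\mathfrak p\mid N$, not $\alpha$. The two inversions cancel and the conclusion $h(\theta)^{\sigma([(\omega)])}=h^{\alpha}(\theta)$ survives, but the justification as written is incorrect. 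A clean way to phrase it: take $s=x$ directly in the reciprocity law $h(\theta)^{(s^{-1},K)}=h^{g_\theta(s)}(\theta)$. Writing $y$ for the idele with $y_{\mathfrak p}=\omega$ for $\mathfrak p\nmid N$ and $y_{\mathfrak p}=1$ for $\mathfrak p\mid N$, one has $xy=\omega$ (the principal idele) and $y$ visibly represents $[(\omega)]$ in $\mathrm{Cl}(N\O_K)$, so $(x^{-1},K)|_{K_{(N)}}=\sigma([(\omega)])$; and $g_\theta(x)\equiv\alpha\pmod N$ since $x_{\mathfrak p}=\omega$ at every $\mathfrak p\mid N$. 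Your cautionary remarks about normalization and the $\Q(\zeta_N)$ sanity check are well placed; only this one passage needs to be corrected.
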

\begin{proof}
\cite[Theorem 6.31]{Shimura}.
\end{proof}

\par
We denote for convenience a quadratic form $aX^2+bXY+cY^2\in\mathbb{Z}[X,Y]$ by $[a,b,c]$.
Let $C(d_K)$ be the form class group of discriminant $d_K$.
Then we identify $C(d_K)$ with the set of all \textit{reduced quadratic forms}, namely
\begin{equation*}
C(d_K)=\left\{~[a,b,c]\in\mathbb{Z}[X,Y]~~\bigg|~
\begin{array}{c}
\textrm{$\gcd(a,b,c)=1$,~~$b^2-4ac=d_K$}\\
\textrm{$-a<b\leq a<c$ ~or~ $0\leq b\leq a=c$}
\end{array}\right\}
\end{equation*}
(\cite[Theorem 2.8]{Cox}).
Here we note that if $[a,b,c]\in C(d_K)$, then $a\leq\sqrt{{-d_K}/{3}}$ (\cite[p.29]{Cox}) and
 $C(d_K)$ is isomorphic to $\mathrm{Gal}(K_{(1)}/K)$ (\cite[Theorem 7.7]{Cox}).
For $Q=[a,b,c]\in C(d_K)$, we let
\begin{equation*}
\theta_Q=\frac{-b+\sqrt{d_K}}{2a}\in\mathbb{H}.
\end{equation*} 
Further, we define $\beta_Q=(\beta_p)_p\in\prod_p\mathrm{GL}_2(\mathbb{Z}_p)$ as follows:
\begin{itemize}
\item[] \textbf{Case 1} : $d_K\equiv 0 \pmod{4}$
\begin{equation*}
\beta_p=\left\{
\begin{array}{ll}
\left[\begin{matrix}
a&{b}/{2}\\
0&1
\end{matrix}\right]&\textrm{if $p\nmid a$}\vspace{0.2cm}\\
\left[\begin{matrix}
-{b}/{2}&-c\\
1&0
\end{matrix}\right]&\textrm{if $p\,|\,a$ and $p\nmid c$}\vspace{0.2cm}\\
\left[\begin{matrix}
-a-{b}/{2}&-c-{b}/{2}\\
1&-1
\end{matrix}\right]&\textrm{if $p\,|\,a$ and $p\,|\, c$},
\end{array}\right.
\end{equation*}
\item[] \textbf{Case 2} : $d_K\equiv 1 \pmod{4}$
\begin{equation*}
\beta_p=\left\{
\begin{array}{ll}
\left[\begin{matrix}
a&{(b-1)}/{2}\\
0&1
\end{matrix}\right]&\textrm{if $p\nmid a$}\vspace{0.2cm}\\
\left[\begin{matrix}
-{(b+1)}/{2}&-c\\
1&0
\end{matrix}\right]&\textrm{if $p\,|\,a$ and $p\nmid c$}\vspace{0.2cm}\\
\left[\begin{matrix}
-a-{(b+1)}/{2}&-c+{(1-b)}/{2}\\
1&-1
\end{matrix}\right]&\textrm{if $p\,|\,a$ and $p\,|\, c$}.
\end{array}\right.
\end{equation*}
\end{itemize}

\begin{proposition}\label{shimura reciprocity law}
For a positive integer $N$, we achieve a one-to-one correspondence
\begin{equation*}
\begin{array}{ccc}
W_{N,\theta}/\ker(\varphi_{N,\theta})\times C(d_K)&\longrightarrow&\mathrm{Gal}(K_{(N)}/K)\\
(\alpha,Q)&\longmapsto&(h(\theta)\mapsto h^{\widetilde{\alpha}\beta_Q}(\theta_Q))_{h\in\mathcal{F}_{N,\theta}}.
\end{array}
\end{equation*}
Here, $\widetilde{\alpha}$ is the preimage of $\alpha$ of the reduction 
\begin{equation*}
W_{N,\theta}/ \{\pm I_2\}\rightarrow W_{N,\theta}/\ker(\varphi_{N,\theta})
\end{equation*}
and the action of $\beta_Q$ on $\mathcal{F}_N$ is described by the action of $\beta\in\mathrm{GL}_2(\mathbb{Z}/N\mathbb{Z})/ \{\pm I_2\}$
so that $\beta\equiv \beta_p\pmod{N\mathbb{Z}_p}$ for all primes $p\,|\,N$.
\end{proposition}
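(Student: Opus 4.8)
The plan is to assemble the correspondence out of the two descriptions already in hand: that of $\mathrm{Gal}(K_{(N)}/K_{(1)})$ by $W_{N,\theta}/\ker(\varphi_{N,\theta})$ from Proposition~\ref{ray class conjugate}, and that of $\mathrm{Gal}(K_{(1)}/K)$ by the form class group $C(d_K)$. Since $K_{(1)}/K$ and $K_{(N)}/K_{(1)}$ are both Galois, there is an exact sequence
\[
1\longrightarrow \mathrm{Gal}(K_{(N)}/K_{(1)})\longrightarrow \mathrm{Gal}(K_{(N)}/K)\longrightarrow \mathrm{Gal}(K_{(1)}/K)\longrightarrow 1,
\]
so it suffices to attach to each $Q\in C(d_K)$ an automorphism $\sigma_Q\in\mathrm{Gal}(K_{(N)}/K)$ lifting the element of $\mathrm{Gal}(K_{(1)}/K)$ attached to $Q$, with a computable action on $h(\theta)$, and then to check that the rule sending $(\alpha,Q)$ to the composite of $\sigma_Q$ with $\varphi_{N,\theta}(\alpha)$ is exactly $h(\theta)\mapsto h^{\widetilde\alpha\beta_Q}(\theta_Q)$. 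Because
\[
\big|W_{N,\theta}/\ker(\varphi_{N,\theta})\big|\cdot\big|C(d_K)\big|=[K_{(N)}:K_{(1)}]\cdot[K_{(1)}:K]=[K_{(N)}:K],
\]
once the rule is shown to be well defined on cosets and to take values in $\mathrm{Gal}(K_{(N)}/K)$, bijectivity follows from injectivity: two images coincide only if their restrictions to $K_{(1)}$ coincide, which forces $Q=Q'$ via $C(d_K)\cong\mathrm{Gal}(K_{(1)}/K)$, and then $\widetilde\alpha=\widetilde{\alpha'}$ by injectivity of the homomorphism induced by $\varphi_{N,\theta}$ on $W_{N,\theta}/\ker(\varphi_{N,\theta})$.

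For $Q=[a,b,c]$ I would take the proper $\O_K$-ideal $\mathfrak{a}_Q=\mathbb{Z}a\theta_Q+\mathbb{Z}a$, so that $[\theta_Q,1]=a^{-1}\mathfrak{a}_Q$ as lattices and the class of $\mathfrak{a}_Q$ corresponds to $Q$ under $C(d_K)\cong\mathrm{Gal}(K_{(1)}/K)$. The main theorem of complex multiplication (\cite[Theorem 6.31]{Shimura}, as already invoked for Proposition~\ref{principal class action}) computes, for $h\in\mathcal{F}_{N,\theta}$ and a ray class $[\mathfrak{b}]$, the value $h(\theta)^{\sigma([\mathfrak{b}])}$ as $h^{q_\theta(s)}(\theta)$, where $s$ is a finite idele of $K$ attached to $\mathfrak{b}$ and $q_\theta\colon K_{\mathbb{A},f}^\times\to\mathrm{GL}_2(\mathbb{A}_{\mathbb{Q},f})$ is the embedding given by the regular representation of $K$ on the $\mathbb{Q}$-basis $\{\theta,1\}$ --- the very representation producing the matrices of $W_{N,\theta}$, which is why the $\alpha$-action of Proposition~\ref{principal class action} and the $Q$-action splice into the single formula $h^{\widetilde\alpha\beta_Q}(\theta_Q)$. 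Concretely, writing the idele of $\mathfrak{a}_Q$ so that $q_\theta(s)=\gamma\,\beta_Q$ with $\gamma\in\mathrm{GL}_2^+(\mathbb{Q})$ and $\beta_Q\in\prod_p\mathrm{GL}_2(\mathbb{Z}_p)$, and using that the rational factor $\gamma$ acts on a modular function purely by the substitution $\tau\mapsto\gamma\tau$ while $\gamma\theta=\theta_Q$, one gets $h(\theta)^{\sigma([\mathfrak{a}_Q])}=h^{\beta_Q}(\theta_Q)$; combining with the $W_{N,\theta}$-part as in Proposition~\ref{principal class action} yields the formula $h^{\widetilde\alpha\beta_Q}(\theta_Q)$, and its independence of the chosen preimage $\widetilde\alpha$ since $\ker(\varphi_{N,\theta})$ acts trivially on the relevant modular values (Proposition~\ref{ray class conjugate}).

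It remains to pin down $\beta_Q$ explicitly, and here the case distinctions of the statement come from an unavoidable local choice: at each prime $p$ one must exhibit a $\mathbb{Z}_p$-basis of $\mathfrak{a}_Q\otimes\mathbb{Z}_p$ whose transition matrix to $\{\theta,1\}$ lies in $\mathrm{GL}_2(\mathbb{Z}_p)$, and the obvious basis $\{a\theta_Q,a\}$ is $\mathbb{Z}_p$-integral and invertible only when $p\nmid a$; when $p\mid a$ one passes to a basis adapted to the generator $c$ of the norm, with a further modification when $p\mid a$ and $p\mid c$. Carrying out this local linear algebra in the two parity cases for $d_K$ (using $\theta^2=-B_\theta\theta-C_\theta$, the relation $b^2-4ac=d_K$, and the congruence $b\equiv B_\theta\pmod 2$) produces precisely the three matrices displayed under Case~1 and Case~2; setting $\beta_Q=(\beta_p)_p$, and then $\beta\in\mathrm{GL}_2(\mathbb{Z}/N\mathbb{Z})/\{\pm I_2\}$ with $\beta\equiv\beta_p\pmod{N\mathbb{Z}_p}$ for $p\mid N$, records exactly the data relevant to the action on $\mathcal{F}_N$.

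The genuine obstacle is the middle step: choosing the idele representing $\mathfrak{a}_Q$ and verifying that $q_\theta(s)$ really factors as $\gamma\beta_Q$ with $\beta_Q$ equal to the explicit $\mathbb{Z}_p$-integral recipe, i.e.\ reconciling the idelic and lattice bookkeeping of the main theorem of complex multiplication with the matrix-theoretic definition of $\beta_Q$. In particular one must normalize $\beta_Q$ so that $\det\beta_Q\bmod N$ induces the automorphism of $\mathbb{Q}(\zeta_N)$ prescribed by $\mathcal{N}(\mathfrak{a}_Q)$, thereby matching the $G_N$-part of the decomposition \eqref{decomposition}; once this is in place the remaining verifications are the routine local computations indicated above. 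Alternatively, since this is precisely the algorithm worked out in \cite[\S3]{Gee}, \cite[\S3]{Stevenhagen} and \cite[\S2]{Jung}, one may simply refer to those sources.
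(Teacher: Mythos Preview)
Your proposal is correct and takes essentially the same approach as the paper: the paper's proof consists of the single sentence ``It is immediate from Proposition~\ref{ray class conjugate} and \cite[\S4]{Gee},'' and your sketch is precisely an unpacking of what lies behind those two citations---combining the $W_{N,\theta}$-description of $\mathrm{Gal}(K_{(N)}/K_{(1)})$ with the form-class-group lift to $\mathrm{Gal}(K_{(N)}/K)$ via the idelic Shimura reciprocity. Your closing remark that one may ``simply refer to those sources'' is in fact all the paper does.
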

\begin{proof}
It is immediate from Proposition \ref{ray class conjugate} and \cite[\S4]{Gee}.
\end{proof}

\section{Siegel-Ramachandra invariants}

In this section we shall introduce the arithmetic properties of Siegel functions and describe some necessary facts about Siegel-Ramachandra invariants for later use.
\par

For a rational vector $\mathbf{r}=\left[\begin{matrix}r_1\\r_2\end{matrix}\right]\in\mathbb{Q}^2\setminus\mathbb{Z}^2$, we define the \textit{Siegel function} $g_{\mathbf{r}}(\tau)$ on $\mathbb{H}$ by the following infinite product
\begin{equation*}
g_{\mathbf{r}}(\tau)=-q_{\tau}^{\frac{1}{2}\mathbf{B}_2(r_1)}e^{\pi i r_2(r_1-1)}(1-q_z)\prod_{n=1}^\infty(1-q_\tau^n q_z)(1-q_\tau^n q_z^{-1}),
\end{equation*}
where $\mathbf{B}_2(X)=X^2-X+1/6$ is the second Bernoulli polynomial and $z=r_1\tau+r_2$.
Then it has no zeros and poles on $\mathbb{H}$ (\cite{Siegel} or \cite[p.36]{Kubert}).
The following proposition describe the modularity criterion for Siegel functions.

\begin{proposition}\label{quad relation}
Let $N\geq 2$ be a positive integer and $\{m(\mathbf{r})\}_{\mathbf{r}=\left[\begin{smallmatrix}r_1\\r_2\end{smallmatrix}\right]\in\frac{1}{N}\mathbb{Z}^2\setminus\mathbb{Z}^2}$ be a family of integers such that $m(\mathbf{r})=0$ except for finitely many $\mathbf{r}$.
A finite product of Siegel functions
\begin{equation*}
\zeta\prod_{\mathbf{r}\in \frac{1}{N}\mathbb{Z}^2\setminus\mathbb{Z}^2}g_{\mathbf{r}}(\tau)^{m(\mathbf{r})}
\end{equation*}
belongs to $\mathcal{F}_N$ if 
\begin{equation*}
\begin{array}{l}
\displaystyle\sum_{\mathbf{r}}m(\mathbf{r})(Nr_1)^2\equiv\sum_{\mathbf{r}}m(\mathbf{r})(Nr_2)^2\equiv 0\pmod{\mathrm{gcd}(2,N)\cdot N},\vspace{0.2cm}\\
\displaystyle\sum_{\mathbf{r}}m(\mathbf{r})(Nr_1)(Nr_2)\equiv 0\pmod{N},\vspace{0.2cm}\\
\displaystyle\sum_{\mathbf{r}}m(\mathbf{r})\cdot\mathrm{gcd}(12,N)\equiv 0\pmod{12}.
\end{array}
\end{equation*}
Here, 
\begin{equation*}
\zeta=\prod_{\mathbf{r}}e^{\pi i r_2(1-r_1)m(\mathbf{r})}\in\mathbb{Q}(\zeta_{2N^2}).
\end{equation*}
\end{proposition}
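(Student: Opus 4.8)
The plan is to reduce the assertion to the transformation theory of Klein forms and to the description of $\mathcal{F}_N$ recalled in Section~2. Recall the relation $g_{\mathbf{r}}(\tau)=\mathfrak{k}_{\mathbf{r}}(\tau)\,\eta(\tau)^{2}$ connecting the Siegel function with the Klein form $\mathfrak{k}_{\mathbf{r}}$ (a form of weight $-1$ attached to $\mathbf{r}$) and the Dedekind eta function (see \cite{Kubert}). Writing $M=\sum_{\mathbf{r}}m(\mathbf{r})$, the function under consideration is
\begin{equation*}
F(\tau)=\zeta\prod_{\mathbf{r}}g_{\mathbf{r}}(\tau)^{m(\mathbf{r})}=\zeta\Big(\prod_{\mathbf{r}}\mathfrak{k}_{\mathbf{r}}(\tau)^{m(\mathbf{r})}\Big)\eta(\tau)^{2M}.
\end{equation*}
Since $g_{\mathbf{r}}$ is holomorphic and nowhere vanishing on $\mathbb{H}$, and since $g_{\mathbf{r}}(\gamma\tau)=\varepsilon(\gamma)\,g_{{}^t\gamma\,\mathbf{r}}(\tau)$ for every $\gamma\in SL_2(\mathbb{Z})$ with $\varepsilon(\gamma)$ a root of unity, the same holds for $F$, and at every cusp the $q_\tau$-expansion of $F$ is again, up to a root of unity, a product of Siegel functions, hence of the shape $q_\tau^{c}(1+\cdots)$ with $c\in\mathbb{Q}$; so $F$ has a zero or pole of finite order at each cusp. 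Consequently $F\in\mathbb{C}(X(N))$ once $F$ is $\G(N)$-invariant, and, by the characterization of $\mathcal{F}_N$ in Section~2, $F\in\mathcal{F}_N$ once in addition all Fourier coefficients of $F$ lie in $\mathbb{Q}(\zeta_N)$. Hence there are two things to prove: $\G(N)$-invariance, and rationality of the Fourier coefficients over $\mathbb{Q}(\zeta_N)$.

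For $\G(N)$-invariance I would invoke the standard transformation laws of Klein forms: for $\gamma=\left[\begin{smallmatrix}a&b\\c&d\end{smallmatrix}\right]\in SL_2(\mathbb{Z})$ one has $\mathfrak{k}_{\mathbf{r}}(\gamma\tau)=(c\tau+d)^{-1}\mathfrak{k}_{{}^t\gamma\,\mathbf{r}}(\tau)$; for $\mathbf{n}={}^t(n_1,n_2)\in\mathbb{Z}^2$ one has $\mathfrak{k}_{\mathbf{r}+\mathbf{n}}(\tau)=(-1)^{n_1n_2+n_1+n_2}e^{-\pi i(n_1r_2-n_2r_1)}\mathfrak{k}_{\mathbf{r}}(\tau)$; and $\eta(\gamma\tau)^{2}=\varepsilon(\gamma)(c\tau+d)\,\eta(\tau)^{2}$, where $\varepsilon$ is the multiplier system of $\eta^{2}$, a character of $SL_2(\mathbb{Z})$ of order $12$. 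For $\gamma\in\G(N)$ and $\mathbf{r}\in\frac1N\mathbb{Z}^2$ we have ${}^t\gamma\,\mathbf{r}\equiv\mathbf{r}\pmod{\mathbb{Z}^2}$; write ${}^t\gamma\,\mathbf{r}=\mathbf{r}+\mathbf{n}_\gamma(\mathbf{r})$ with $\mathbf{n}_\gamma(\mathbf{r})={}^t(n_1,n_2)\in\mathbb{Z}^2$ (depending on $\mathbf{r}$). Substituting, the powers $(c\tau+d)^{-M}$ and $(c\tau+d)^{M}$ cancel and we get $F(\gamma\tau)=\chi(\gamma)F(\tau)$ with
\begin{equation*}
\chi(\gamma)=\varepsilon(\gamma)^{M}\prod_{\mathbf{r}}\Big((-1)^{n_1n_2+n_1+n_2}e^{-\pi i(n_1r_2-n_2r_1)}\Big)^{m(\mathbf{r})}.
\end{equation*}
It remains to show $\chi(\gamma)=1$ for all $\gamma\in\G(N)$, and for this it is enough to test $\chi$ on a fixed finite generating set of $\G(N)$. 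On such generators one computes $\mathbf{n}_\gamma(\mathbf{r})$ explicitly in terms of the entries of $\gamma$ (all $\equiv$ those of $I_2$ modulo $N$) and of $N\mathbf{r}\in\mathbb{Z}^2$, so that the exponents appearing in $\chi(\gamma)$ become bilinear and quadratic expressions in the integers $Nr_1,Nr_2$; a direct reduction modulo the relevant root-of-unity orders then shows that the two congruences $\sum_{\mathbf{r}}m(\mathbf{r})(Nr_1)^2\equiv\sum_{\mathbf{r}}m(\mathbf{r})(Nr_2)^2\equiv0\pmod{\gcd(2,N)\,N}$ and $\sum_{\mathbf{r}}m(\mathbf{r})(Nr_1)(Nr_2)\equiv0\pmod N$ force the Klein-form part of $\chi(\gamma)$ to be $1$, while the congruence $\gcd(12,N)\sum_{\mathbf{r}}m(\mathbf{r})\equiv0\pmod{12}$ --- equivalently $M\equiv0\pmod{12/\gcd(12,N)}$, and $12/\gcd(12,N)$ is precisely the exponent of the subgroup $\varepsilon(\G(N))\subseteq\mu_{12}$ --- forces $\varepsilon(\gamma)^{M}=1$.

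For the Fourier coefficients, observe that the prefactor $\zeta=\prod_{\mathbf{r}}e^{\pi ir_2(1-r_1)m(\mathbf{r})}$ is exactly the reciprocal of the product of the elementary factors $e^{\pi ir_2(r_1-1)}$ occurring in the defining products of the $g_{\mathbf{r}}$, so that
\begin{equation*}
F(\tau)=(-1)^{M}\prod_{\mathbf{r}}\Big(q_\tau^{\frac12\mathbf{B}_2(r_1)}(1-q_z)\prod_{n\ge1}(1-q_\tau^{n}q_z)(1-q_\tau^{n}q_z^{-1})\Big)^{m(\mathbf{r})},\qquad z=r_1\tau+r_2.
\end{equation*}
Here $q_z=\zeta_N^{Nr_2}q_\tau^{r_1}$ with $r_1\in\frac1N\mathbb{Z}$, so each factor $1-q_\tau^{n}q_z^{\pm1}$ expands into a power series in $q_\tau^{1/N}$ with coefficients in $\mathbb{Z}[\zeta_N]$; and once $\G(N)$-invariance is established, $F$ is periodic of period $N$ in $\tau$, so its $q$-expansion is genuinely in $q_\tau^{1/N}$, which in particular forces the leading exponent $\tfrac12\sum_{\mathbf{r}}m(\mathbf{r})\mathbf{B}_2(r_1)$ into $\tfrac1N\mathbb{Z}$. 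Hence all Fourier coefficients of $F$ lie in $\mathbb{Q}(\zeta_N)$, and together with the second paragraph this gives $F\in\mathcal{F}_N$. I expect the main obstacle to be the computation of $\chi(\gamma)$ on a generating set of $\G(N)$ together with the verification that the three congruences --- with their somewhat delicate moduli $\gcd(2,N)\,N$, $N$ and $12$, arranged so that the cases $N$ even/odd and $3\mid N$ or $3\nmid N$ are treated uniformly --- are precisely what is needed to kill the roots of unity coming from the Klein-form cocycle and the $\eta$-multiplier; the reduction to these two claims and the rationality argument are routine and follow the treatment in \cite{Kubert}.
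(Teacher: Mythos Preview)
Your proposal is correct and follows precisely the standard argument in Kubert--Lang \cite[Chapter 3, Theorems 5.2 and 5.3]{Kubert}, which is exactly what the paper cites as its proof without further elaboration. In other words, the paper gives no independent argument here, and what you have written is a faithful outline of the proof in the cited reference.
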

\begin{proof}
\cite[Chapter 3, Theorem 5.2 and 5.3]{Kubert}.
\end{proof}

\begin{proposition}\label{Siegel property}
Let $\mathbf{r}, \mathbf{s}\in({1}/{N})\mathbb{Z}^2\setminus\mathbb{Z}^2$ for a positive integer $N\geq 2$. 
\begin{itemize}
\item[\textup{(i)}] $g_{\mathbf{r}}(\tau)^{12N}$ satisfies the relation
\begin{equation*}
g_{\mathbf{r}}(\tau)^{12N}=g_{-\mathbf{r}}(\tau)^{12N}=g_{\langle\mathbf{r}\rangle}(\tau)^{12N}
\end{equation*}
where $\langle X\rangle$ is the fractional part of $X\in\mathbb{R}$ such that $0\leq \langle X\rangle<1$ and
$\langle\mathbf{r}\rangle=\left[\begin{matrix}\langle r_1\rangle\\ \langle r_2\rangle\end{matrix}\right]$.
\item[\textup{(ii)}] $g_{\mathbf{r}}(\tau)^{12N}$ belongs to $\mathcal{F}_N$.
Moreover, $\alpha\in \mathrm{GL}_2(\mathbb{Z}/N\mathbb{Z})/\{\pm I_2\}$ acts on it by
\begin{equation*}
\left(g_{\mathbf{r}}(\tau)^{12N}\right)^\alpha=g_{{^t}\alpha\mathbf{r}}(\tau)^{12N}.
\end{equation*}
\item[\textup{(iii)}] For $\gamma=\left[\begin{matrix}
a&b\\
c&d
\end{matrix}\right]\in\mathrm{SL}_2(\mathbb{Z})$ we have
\begin{equation*}
\left(\frac{g_{\mathbf{r}}(\tau)}{g_{\mathbf{s}}(\tau)}\right)\circ\gamma=\frac{g_{{}^t\gamma\mathbf{r}}(\tau)}{g_{{}^t\gamma\mathbf{s}}(\tau)}.
\end{equation*}

\end{itemize}
\end{proposition}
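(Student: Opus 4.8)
The three statements are, in the notation of this section, the classical functional equations of Siegel functions, and the plan is to deduce them from two facts recorded in \cite[Chapter 2]{Kubert}. First, for $\mathbf{r}\in\mathbb{Q}^2\setminus\mathbb{Z}^2$ one has $g_{-\mathbf{r}}(\tau)=-g_{\mathbf{r}}(\tau)$, and for $\mathbf{n}\in\mathbb{Z}^2$ one has $g_{\mathbf{r}+\mathbf{n}}(\tau)=\xi(\mathbf{r},\mathbf{n})\,g_{\mathbf{r}}(\tau)$ for a root of unity $\xi(\mathbf{r},\mathbf{n})$ of the shape $\pm e^{\pi i(n_2r_1-n_1r_2)}$; both are read off directly from the infinite product, since $\mathbf{r}\mapsto-\mathbf{r}$ only interchanges $q_z$ with $q_z^{-1}$ (and shifts the leading $q_\tau$-power), while $\mathbf{r}\mapsto\mathbf{r}+\mathbf{n}$ reindexes the product. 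Second, for $\gamma=\left[\begin{smallmatrix}a&b\\c&d\end{smallmatrix}\right]\in\mathrm{SL}_2(\mathbb{Z})$ one has $g_{\mathbf{r}}(\gamma\tau)=\varepsilon(\gamma)\,g_{{}^t\gamma\mathbf{r}}(\tau)$, where $\varepsilon(\gamma)$ is a $12$th root of unity \emph{independent of $\mathbf{r}$}; this comes from writing $g_{\mathbf{r}}=\mathfrak{k}_{\mathbf{r}}\cdot\Delta^{1/12}$ with $\mathfrak{k}_{\mathbf{r}}$ the Klein form, combining $\mathfrak{k}_{\mathbf{r}}(\gamma\tau)=(c\tau+d)^{-1}\mathfrak{k}_{{}^t\gamma\mathbf{r}}(\tau)$ (immediate from the homogeneity of $\mathfrak{k}$ and $[\gamma\tau,1]=(c\tau+d)^{-1}[\tau,1]$) with $\Delta(\gamma\tau)^{1/12}=\varepsilon(\gamma)(c\tau+d)\,\Delta(\tau)^{1/12}$.

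Granting these, (i) is immediate: for $\mathbf{r}\in\frac1N\mathbb{Z}^2$ the exponent $n_2r_1-n_1r_2$ lies in $\frac1N\mathbb{Z}$, so $\xi(\mathbf{r},\mathbf{n})$ is a $2N$-th root of unity up to sign, and this together with the sign in $g_{-\mathbf{r}}=-g_{\mathbf{r}}$ is killed by the $12N$-th power; since $\langle\mathbf{r}\rangle-\mathbf{r}\in\mathbb{Z}^2$ we conclude $g_{\mathbf{r}}^{12N}=g_{-\mathbf{r}}^{12N}=g_{\langle\mathbf{r}\rangle}^{12N}$. Part (iii) is equally quick, since $\varepsilon(\gamma)$ occurs identically in numerator and denominator:
\[
\left(\frac{g_{\mathbf{r}}}{g_{\mathbf{s}}}\right)\circ\gamma=\frac{\varepsilon(\gamma)\,g_{{}^t\gamma\mathbf{r}}(\tau)}{\varepsilon(\gamma)\,g_{{}^t\gamma\mathbf{s}}(\tau)}=\frac{g_{{}^t\gamma\mathbf{r}}(\tau)}{g_{{}^t\gamma\mathbf{s}}(\tau)}.
\]

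For (ii), I would first obtain $g_{\mathbf{r}}^{12N}\in\mathcal{F}_N$ from Proposition \ref{quad relation} with the single exponent $m(\mathbf{r})=12N$: the three congruences hold trivially because $Nr_1,Nr_2\in\mathbb{Z}$ and $12\mid 12N$, while the accompanying constant $\zeta=e^{\pi i r_2(1-r_1)\cdot 12N}$ equals $\zeta_N^{-6(Nr_1)(Nr_2)}\in\mathbb{Q}(\zeta_N)\subseteq\mathcal{F}_N$, so that $g_{\mathbf{r}}^{12N}=\zeta^{-1}\bigl(\zeta\,g_{\mathbf{r}}^{12N}\bigr)\in\mathcal{F}_N$. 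For the Galois action I would factor $\alpha\in\mathrm{GL}_2(\mathbb{Z}/N\mathbb{Z})/\{\pm I_2\}$ via the decomposition (\ref{decomposition}) as a product of a matrix in $\mathrm{SL}_2(\mathbb{Z}/N\mathbb{Z})/\{\pm I_2\}$ and a diagonal matrix $\left[\begin{smallmatrix}1&0\\0&d\end{smallmatrix}\right]\in G_N$, and handle the two separately. On the $\mathrm{SL}_2$-part, lift to $\widetilde{\gamma}\in\mathrm{SL}_2(\mathbb{Z})$ and use the second functional equation: $(g_{\mathbf{r}}^{12N})^{\gamma}(\tau)=g_{\mathbf{r}}(\widetilde{\gamma}\tau)^{12N}=\varepsilon(\widetilde{\gamma})^{12N}\,g_{{}^t\widetilde{\gamma}\mathbf{r}}(\tau)^{12N}=g_{{}^t\gamma\mathbf{r}}(\tau)^{12N}$, the $12$th root of unity dying. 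On the diagonal part, write out the $q_\tau^{1/N}$-expansion of $g_{\mathbf{r}}^{12N}$ from the product formula: its prefactor is $q_\tau^{6N\mathbf{B}_2(r_1)}\zeta_N^{6(Nr_1)(Nr_2)}$ and the remaining factors are polynomials in $q_\tau^{1/N}$ and $\zeta_N^{Nr_2}$ with integer coefficients (since $q_z=q_\tau^{r_1}\zeta_N^{Nr_2}$); applying the substitution $\zeta_N\mapsto\zeta_N^{d}$ of (\ref{diagonal action}) replaces $\zeta_N^{Nr_2}$ throughout by $\zeta_N^{N(dr_2)}$, which is exactly the expansion of $g_{(r_1,\,dr_2)}^{12N}=g_{{}^t\alpha\mathbf{r}}^{12N}$.

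The main obstacle is the bookkeeping concealed in (ii): one must check that the $12N$-th power, and no smaller one, is what simultaneously clears the $\mathrm{SL}_2$-cocycle $\varepsilon(\gamma)$ (a $12$th root of unity), the translation roots of unity (only $2N$-th roots of unity) and the fractional part of the leading $q_\tau$-exponent, while keeping all Fourier coefficients in $\mathbb{Q}(\zeta_N)$ rather than merely $\mathbb{Q}(\zeta_{2N^2})$; and matching the $G_N$-action with the substitution $\mathbf{r}\mapsto{}^t\alpha\mathbf{r}$ requires carrying the prefactor $-q_\tau^{\mathbf{B}_2(r_1)/2}e^{\pi i r_2(r_1-1)}$ through that power carefully. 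Everything else is a direct substitution once the two functional equations are available.
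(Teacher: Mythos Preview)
Your argument is correct. The paper does not give a proof here at all: it simply cites \cite[Proposition~1.1]{Jung} and \cite[p.~27, K\,1]{Kubert}, so there is no ``paper's approach'' to compare against beyond those references. What you have written is essentially the standard derivation found there---the decomposition $g_{\mathbf{r}}=\mathfrak{k}_{\mathbf{r}}\cdot\eta^{2}$ into Klein form and eta-square, giving the $\mathrm{SL}_2(\mathbb{Z})$-transformation law $g_{\mathbf{r}}\circ\gamma=\varepsilon(\gamma)\,g_{{}^t\gamma\mathbf{r}}$ with an $\mathbf{r}$-independent $12$th root of unity, together with the translation formula $g_{\mathbf{r}+\mathbf{n}}=\xi\,g_{\mathbf{r}}$ for a $2N$-th root of unity $\xi$---and your handling of the three parts is sound. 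In particular, your use of Proposition~\ref{quad relation} with the single exponent $m(\mathbf{r})=12N$ to obtain membership in $\mathcal{F}_N$, and your treatment of the $G_N$-action by tracking $\zeta_N^{Nr_2}$ through the Fourier expansion, are both clean and accurate; the bookkeeping you flag as the ``main obstacle'' is genuinely the only nontrivial content, and you have it right.
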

\begin{proof}
\cite[Proposition 1.1]{Jung} and \cite[p.27, K 1]{Kubert}.
\end{proof}

Let $L$ be a lattice in $\mathbb{C}$ and $t\in\mathbb{C}\setminus L$ be a point of finite order with respect to $L$.
We choose a basis $[\o_1,\o_2]$ of $L$ such that $z={\o_1}/{\o_2}\in\mathbb{H}$ and write
\begin{equation*}
t=r_1\o_1+r_2\o_2
\end{equation*}
for some $\left[\begin{matrix}r_1\\r_2\end{matrix}\right]\in\mathbb{Q}^2\setminus\mathbb{Z}^2$.
And, we define a function
\begin{equation*}
g(t,[\o_1,\o_2])=g_{\left[\begin{smallmatrix}r_1\\r_2\end{smallmatrix}\right]}(z),
\end{equation*}
which depends on the choice of $\o_1, \o_2$.
However, by raising $12$-th power we obtain a function $g^{12}(t,L)$ of $t$ and $L$ (\cite[p.31]{Kubert}).
\par
Now let $K$ be an imaginary quadratic field of discriminant $d_K$, $\F$ be a nontrivial proper integral ideal of $K$ and $N$ be the smallest positive integer in $\F$.
For $C\in\mathrm{Cl}(\F)$, we define the \textit{Siegel-Ramachandra invariant} of conductor $\F$ at $C$  by 
\begin{equation*}
g_\F(C)=g^{12N}(1,\F\mathfrak{c}^{-1}),
\end{equation*}
where $\mathfrak{c}$ is any integral ideal in $C$.
This value depends only on $\F$ and the class $C$, not on the choice of $\mathfrak{c}$.
\par

\begin{proposition}\label{Galois action}
Let $C, C'\in\mathrm{Cl}(\F)$ with $\F\neq \O_K$.
\begin{itemize}
\item[\textup{(i)}] $g_\F(C)$ lies in $K_\F$ as an algebraic integer. 
If $N$ is composite, $g_\F(C)$ is a unit in $K_\F$.
\item[\textup{(ii)}]
We have the transformation formula
\begin{equation*}
g_\F(C)^{\sigma(C')}=g_\F(CC'),
\end{equation*}
where $\sigma$ is the Artin map stated in \textup{(\ref{artin map})}.
\item[\textup{(iii)}]
${g_\F(C')}/{g_\F(C)}$ is a unit in $K_\F$.
\end{itemize}
\end{proposition}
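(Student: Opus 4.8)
The plan is to realize $g_\F(C)$ as the value at a CM point of a modular function of level $N$ and then to read off (i)--(iii) from Shimura's reciprocity law (Propositions \ref{complex multiplication}--\ref{shimura reciprocity law}), the transformation rule of Proposition \ref{Siegel property}, and the known arithmetic of modular units. Note first that $N\ge 2$ because $\F\neq\O_K$. I would choose an integral ideal $\mathfrak{c}\in C$ with $\gcd(\mathfrak{c},\F)=1$ and a $\mathbb{Z}$-basis $\F\mathfrak{c}^{-1}=[\o_1,\o_2]$ with $z:=\o_1/\o_2\in\mathbb{H}$. Since $N\O_K\subseteq\F$ and $1\in\mathfrak{c}^{-1}$, one can write $1=r_1\o_1+r_2\o_2$ with $\mathbf{r}={}^t[r_1,r_2]\in\tfrac1N\mathbb{Z}^2$, and $\mathbf{r}\notin\mathbb{Z}^2$ since $1\in\F\mathfrak{c}^{-1}$ would force $\F\mid\mathfrak{c}$. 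By definition $g_\F(C)=g_{\mathbf{r}}(z)^{12N}$. As $\F\mathfrak{c}^{-1}$ is a proper fractional $\O_K$-ideal, it is homothetic to $[\theta_Q,1]$ for the reduced form $Q\in C(d_K)$ it represents, so $z=\gamma\theta_Q$ for some $\gamma\in\mathrm{SL}_2(\mathbb{Z})$; Proposition \ref{Siegel property}(ii) then gives $g_\F(C)=g_{\mathbf{s}}(\theta_Q)^{12N}$ with $\mathbf{s}={}^t\gamma\mathbf{r}\in\tfrac1N\mathbb{Z}^2\setminus\mathbb{Z}^2$. Since Siegel functions are holomorphic and nowhere vanishing on $\mathbb{H}$, $g_{\mathbf{s}}^{12N}\in\mathcal{F}_N$ is finite and nonzero at $\theta_Q$, and moving $\theta_Q$ to $\theta$ by the matrix $\beta_Q$ exhibits $g_\F(C)$ as a $K$-conjugate of a value at $\theta$ of a function in $\mathcal{F}_{N,\theta}$; hence $g_\F(C)\in K_{(N)}$ by Propositions \ref{complex multiplication} and \ref{shimura reciprocity law}.

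Next I would prove the transformation formula of (ii) first at the level of $K_{(N)}$ and use it to descend to $K_\F$. Given $C'\in\mathrm{Cl}(\F)$, choose an integral ideal $\mathfrak{c}'$ prime to $N\F$ whose class modulo $\F$ is $C'$ and set $D'=[\mathfrak{c}']\in\mathrm{Cl}(N\O_K)$. Applying Shimura's reciprocity law in idelic form (Proposition \ref{principal class action} and \cite[Theorem 6.31]{Shimura}) together with Proposition \ref{Siegel property}(ii), the conjugate of $g_\F(C)$ under the automorphism of $K_{(N)}$ attached to $D'$ is again a value $g^{12N}(1,\cdot)$ of a Siegel function, and, tracking the lattice, the matrix attached to $\mathfrak{c}'$ carries $(1,\F\mathfrak{c}^{-1})$ to $(1,\F(\mathfrak{c}\mathfrak{c}')^{-1})$ up to homothety; thus $g_\F(C)^{\sigma(D')}=g^{12N}(1,\F(\mathfrak{c}\mathfrak{c}')^{-1})=g_\F(CC')$, the last equality because $g_\F$ depends only on the class modulo $\F$. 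Since $\mathrm{Gal}(K_{(N)}/K_\F)$ corresponds under the Artin map to the kernel of $\mathrm{Cl}(N\O_K)\twoheadrightarrow\mathrm{Cl}(\F)$, taking $D'$ in that kernel yields $g_\F(C)^{\sigma(D')}=g_\F(C)$, so $g_\F(C)\in K_\F$; the displayed identity then restricts to $g_\F(C)^{\sigma(C')}=g_\F(CC')$ for the Artin map of $K_\F/K$, which is (ii).

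For the remaining assertions of (i), I would use that $g_{\mathbf{s}}^{12N}$ is a modular unit on $X(N)$ whose Fourier expansion at $\infty$ has coefficients in $\mathbb{Z}[\zeta_{2N^2}]$. Evaluating at the CM point $\theta_Q$, at which $j$ is an algebraic integer, shows that $g_\F(C)$ is an algebraic integer of $K_\F$ and that the only rational primes that can divide it are the prime divisors of $N$. A counting argument with the distribution relations for Siegel functions --- equivalently, with the second Kronecker limit formula --- then rules these out whenever $N$ is composite. This unit/factorization statement is the one genuinely non-formal input, and it is the step I expect to be the main obstacle; for it I would simply invoke \cite[Chapter 11]{Kubert}.

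Finally, (iii). When $N$ is composite it is immediate from the unit statement above. In general it is enough to show that $g_\F(C')/g_\F(C)$ is an algebraic integer for all $C,C'$: by (ii) one has $N_{K_\F/K}\!\left(g_\F(C')/g_\F(C)\right)=\prod_{D\in\mathrm{Cl}(\F)}g_\F(C'D)/g_\F(CD)=1$, so if the quotient is an algebraic integer then so is its inverse (by symmetry in $C$ and $C'$), whence it is a unit. The integrality of the quotient amounts to saying that the ideal $(g_\F(C))$ of $\O_{K_\F}$ is independent of $C$, which follows again from the explicit factorization of Siegel--Ramachandra invariants into primes above the divisors of $N$ derived from the Kronecker limit formula in \cite[Chapter 11]{Kubert}.
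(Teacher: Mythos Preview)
The paper does not actually prove this proposition: its entire proof is the two citations \cite[Chapter~19, Theorem~3]{Lang} and \cite[Chapter~11, Theorem~1.2]{Kubert}. Your proposal is therefore not so much an alternative route as an attempt to unpack what those references contain, and as such the outline is broadly sound. Your strategy of first landing in $K_{(N)}$ via Proposition~\ref{complex multiplication} and then descending to $K_\F$ by showing invariance under $\mathrm{Gal}(K_{(N)}/K_\F)$ via the transformation formula is exactly the structure of the argument in \cite{Lang}; and for the unit statements in (i) and (iii) you correctly locate the real content in \cite[Chapter~11]{Kubert} rather than trying to reprove it.

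Two minor technical caveats on the sketch. First, knowing that the Fourier expansion of $g_{\mathbf{s}}^{12N}$ at $\infty$ has algebraic integer coefficients is not by itself enough to conclude integrality of the CM value: one needs integrality of $g_{\mathbf{s}}^{12N}$ over $\mathbb{Z}[j]$, which requires controlling the expansions at \emph{all} cusps (equivalently, the $q$-expansion principle applied to the full $\mathrm{SL}_2(\mathbb{Z})$-orbit). This is of course in \cite{Kubert}, so your invocation of that source covers it. Second, in your norm argument for (iii) you state that integrality of $g_\F(C')/g_\F(C)$ ``amounts to'' the ideal $(g_\F(C))$ being independent of $C$; strictly speaking integrality of the quotient only needs $(g_\F(C'))\mid(g_\F(C))$, and it is the symmetry in $C,C'$ (which you do note) that upgrades divisibility to equality. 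Either way the substantive input---the explicit prime factorization of $(g_\F(C))$ above primes dividing $N$---is again \cite[Chapter~11]{Kubert}, so your reduction is efficient and correct.
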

\begin{proof}
\cite[Chapter 19, Theorem 3]{Lang} and \cite[Chapter 11, Theorem 1.2]{Kubert}.
\end{proof}

Further, we let $\mathfrak{a}$ be an integral ideal of $K$ which is not divisible by $\F$.
For a class $C\in\mathrm{Cl}(\F)$, we define the \textit{Robert invariant} by
\begin{equation*}
u_\mathfrak{a}(C)=\frac{g^{12}(1,\F\mathfrak{c}^{-1})^{\mathcal{N}(\mathfrak{a})}}{g^{12}(1,\F\mathfrak{a}^{-1}\mathfrak{c}^{-1})},
\end{equation*}
where $\mathfrak{c}$ is any integral ideal in $C$.
This depends only on the class $C$.
Note that $u_\mathfrak{a}(C)$ belongs to $K_\F$, but it is not necessarily a unit (\cite[Chapter 11, Theorem 4.1]{Kubert}).
We shall take products of such invariants with a linear condition in order to get units. 
Let $C_0$ be the unit class in Cl$(\F)$ and $\mathfrak{R}_\F^*$ be the group of all finite products
\begin{equation*}
\prod_{\mathfrak{a}} u_\mathfrak{a}(C_0)^{m(\mathfrak{a})}\quad(m(\mathfrak{a})\in\mathbb{Z})
\end{equation*}
taken with all integral ideal $\mathfrak{a}$ of $K$ prime to $6N$ satisfying $\sum_\mathfrak{a} m(\mathfrak{a})(\mathcal{N}(\mathfrak{a})-1)=0$.
Then $\mathfrak{R}_\F^*$ becomes a subgroup of the units in $K_\F$ (\cite[Chapter 11, Theorem 4.2]{Kubert}).
\par
Let $\o_{K_\F}$ be the number of roots of unity in $K_\F$ and define the group
\begin{equation*}
\Phi_\F(\o_{K_\F})=\left\{ \prod_{C\in\mathrm{Cl}(\F)}g_\F(C)^{n(C)}~\Big|~\sum_C n(C)=0~\textrm{and}~\sum_C n(C)\mathcal{N}(\mathfrak{a}_C)\equiv 0 \pmod{\o_{K_\F}}\right\},
\end{equation*}
where $\mathfrak{a}_C$ is any integral ideal in $C$ prime to $6N$ and the exponents $n(C)$ are integers.
The value $\mathcal{N}(\mathfrak{a}_C) \pmod{\o_{K_\F}}$ does not depend on the choice of $\mathfrak{a}_C$ (\cite[Chapter 9, Lemma 4.1]{Kubert}), and so the group is well-defined. 

\begin{proposition}\label{unit group}
We have
\begin{equation*}
(\mathfrak{R}_\F^*)^N=\Phi_\F(\o_{K_\F}).
\end{equation*}
\end{proposition}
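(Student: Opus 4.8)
The plan is to reduce everything to a single identity linking the Robert and Siegel--Ramachandra invariants, together with bookkeeping over $\mathrm{Cl}(\F)$. The identity is
\[
u_\mathfrak{a}(C_0)^N=g_\F(C_0)^{\mathcal{N}(\mathfrak{a})}\,g_\F([\mathfrak{a}])^{-1}\qquad(\mathfrak{a}\text{ integral, prime to }6N),
\]
obtained at once from the definitions by taking $\mathfrak{c}=\O_K$ in $u_\mathfrak{a}(C_0)$, using $g_\F(C)=g^{12N}(1,\F\mathfrak{c}^{-1})=\bigl(g^{12}(1,\F\mathfrak{c}^{-1})\bigr)^N$, and observing that an ideal prime to $6N$ is automatically prime to $\F$ (every prime divisor of $\F$ divides the least positive integer $N$ of $\F$), so that $[\mathfrak{a}]\in\mathrm{Cl}(\F)$ and $g_\F([\mathfrak{a}])=g^{12N}(1,\F\mathfrak{a}^{-1})$. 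Both groups in the statement lie in $\O_{K_\F}^\times$ (the $g_\F(C)$ are nonzero with unit ratios by Proposition~\ref{Galois action}, and $\mathfrak{R}_\F^*$ is a unit group), so all manipulations happen in $K_\F^\times$.

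For the inclusion $(\mathfrak{R}_\F^*)^N\subseteq\Phi_\F(\o_{K_\F})$ I would take a typical element $\prod_\mathfrak{a}u_\mathfrak{a}(C_0)^{Nm(\mathfrak{a})}$ with $\sum_\mathfrak{a}m(\mathfrak{a})(\mathcal{N}(\mathfrak{a})-1)=0$, substitute the identity, and regroup by classes: with $M_C=\sum_{[\mathfrak{a}]=C}m(\mathfrak{a})$ and the relation $\sum_\mathfrak{a}m(\mathfrak{a})\mathcal{N}(\mathfrak{a})=\sum_CM_C$, it becomes $\prod_Cg_\F(C)^{n(C)}$ with $n(C_0)=\sum_{C\neq C_0}M_C$ and $n(C)=-M_C$ for $C\neq C_0$, so $\sum_Cn(C)=0$. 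For the congruence $\sum_Cn(C)\mathcal{N}(\mathfrak{a}_C)\equiv0\pmod{\o_{K_\F}}$ I would invoke the invariance of $\mathcal{N}(\cdot)\bmod\o_{K_\F}$ on classes quoted just before the statement, in particular that $\mathcal{N}(\mathfrak{a})\equiv1\pmod{\o_{K_\F}}$ whenever $\mathfrak{a}\in C_0$ is prime to $6N$ (since $\O_K\in C_0$); a few lines then reduce $\sum_Cn(C)\mathcal{N}(\mathfrak{a}_C)$ modulo $\o_{K_\F}$ to $-\sum_{[\mathfrak{a}]\neq C_0}m(\mathfrak{a})(\mathcal{N}(\mathfrak{a})-1)\equiv\sum_{[\mathfrak{a}]=C_0}m(\mathfrak{a})(\mathcal{N}(\mathfrak{a})-1)\equiv0$.

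For the reverse inclusion, start from $\xi=\prod_Cg_\F(C)^{n(C)}\in\Phi_\F(\o_{K_\F})$, fix for each $C\neq C_0$ an integral $\mathfrak{a}_C\in C$ prime to $6N$, and use the identity as $g_\F(C)=g_\F(C_0)^{\mathcal{N}(\mathfrak{a}_C)}u_{\mathfrak{a}_C}(C_0)^{-N}$ to write $\xi=g_\F(C_0)^{e}\prod_{C\neq C_0}u_{\mathfrak{a}_C}(C_0)^{-Nn(C)}$ with $e=\sum_{C\neq C_0}n(C)(\mathcal{N}(\mathfrak{a}_C)-1)$; the two defining conditions of $\Phi_\F(\o_{K_\F})$ force $e\equiv0\pmod{\o_{K_\F}}$. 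The product over $C\neq C_0$ already has the form of an $N$th power of an element of $\mathfrak{R}_\F^*$, except that its ``linear value'' is $-e\neq0$; to correct this, note that for $\mathfrak{b}$ prime to $6N$ with $[\mathfrak{b}]=C_0$ the identity gives $u_\mathfrak{b}(C_0)^N=g_\F(C_0)^{\mathcal{N}(\mathfrak{b})-1}$, so once $e$ is written as $\sum_i m_i(\mathcal{N}(\mathfrak{b}_i)-1)$ with the $\mathfrak{b}_i\in C_0$ prime to $6N$, we automatically get $\prod_iu_{\mathfrak{b}_i}(C_0)^{Nm_i}=g_\F(C_0)^{e}$, and the combined exponent datum on the disjoint sets $\{\mathfrak{b}_i\}$ and $\{\mathfrak{a}_C\}$ satisfies $\sum m(\mathfrak{a})(\mathcal{N}(\mathfrak{a})-1)=e-e=0$, whence $\xi\in(\mathfrak{R}_\F^*)^N$. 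As $e$ is a multiple of $\o_{K_\F}$, it suffices to know that $\o_{K_\F}$ itself lies in $\bigl\langle\mathcal{N}(\mathfrak{b})-1:\mathfrak{b}\in C_0,\ (\mathfrak{b},6N)=1\bigr\rangle$; equivalently, writing $d=\gcd\{\mathcal{N}(\mathfrak{b})-1:\mathfrak{b}\in C_0,\ (\mathfrak{b},6N)=1\}$, that $d\mid\o_{K_\F}$.

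The inclusion $\o_{K_\F}\mid d$ is once more just the quoted invariance, so the heart of the matter — and the step I expect to be the main obstacle — is $d\mid\o_{K_\F}$. The plan there is to show that $\mathcal{N}(\cdot)\bmod d$ is in fact constant on every class of $\mathrm{Cl}(\F)$: given $\mathfrak{a},\mathfrak{a}'$ prime to $6N$ in a common class $C$, pick an integral $\mathfrak{c}\in C^{-1}$ prime to $6N$; then $\mathfrak{a}\mathfrak{c},\mathfrak{a}'\mathfrak{c}\in C_0$ are integral and prime to $6N$, so $\mathcal{N}(\mathfrak{a})\mathcal{N}(\mathfrak{c})\equiv\mathcal{N}(\mathfrak{a}')\mathcal{N}(\mathfrak{c})\equiv1\pmod d$, whence $\mathcal{N}(\mathfrak{c})$ is a unit mod $d$ and $\mathcal{N}(\mathfrak{a})\equiv\mathcal{N}(\mathfrak{a}')\pmod d$. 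Thus $[\mathfrak{a}]\mapsto\mathcal{N}(\mathfrak{a})\bmod d$ is a well-defined homomorphism $\mathrm{Cl}(\F)\to(\mathbb{Z}/d\mathbb{Z})^\times$; since the Artin map of $K(\zeta_d)/K$ sends $[\mathfrak{p}]$ (for $\mathfrak{p}\nmid d$) to $(\zeta_d\mapsto\zeta_d^{\mathcal{N}(\mathfrak{p})})$, this forces the conductor of $K(\zeta_d)/K$ to divide $\F$, so $K(\zeta_d)\subseteq K_\F$, hence $\zeta_d\in K_\F$ and $d\mid\o_{K_\F}$. Keeping the coprimality conditions to $6N$ consistent while passing between ideal classes is the only genuinely delicate bookkeeping point; alternatively, the gcd identity $d=\o_{K_\F}$ can be cited directly from Kubert--Lang's analysis of the Robert units in \cite{Kubert}.
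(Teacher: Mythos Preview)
The paper does not prove this proposition at all: its ``proof'' is a one-line citation to \cite[Chapter~11, Theorem~4.3]{Kubert}. Your proposal, by contrast, reconstructs the argument behind that theorem. The key identity $u_\mathfrak{a}(C_0)^N=g_\F(C_0)^{\mathcal{N}(\mathfrak{a})}g_\F([\mathfrak{a}])^{-1}$ is exactly the engine Kubert--Lang use, and your bookkeeping for both inclusions is correct. The one substantive step you isolate --- that $d=\gcd\{\mathcal{N}(\mathfrak{b})-1:\mathfrak{b}\in C_0,\ (\mathfrak{b},6N)=1\}$ divides $\o_{K_\F}$ --- is indeed the nontrivial ingredient; your class-field-theoretic sketch for it (well-definedness of $[\mathfrak{a}]\mapsto\mathcal{N}(\mathfrak{a})\bmod d$ on $\mathrm{Cl}(\F)$, hence $K(\zeta_d)\subseteq K_\F$) is the right idea, and the coprimality bookkeeping you flag is handled in \cite[Chapter~9, Lemma~4.1]{Kubert}, the same lemma the paper already invokes when declaring $\Phi_\F(\o_{K_\F})$ well-defined. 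So your approach is correct and strictly more informative than the paper's, which simply outsources the whole statement.
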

\begin{proof}
\cite[Chapter 11, Theorem 4.3]{Kubert}
\end{proof}

\begin{lemma}\label{Nth root}
Let $\F=N\O_K$ for a positive integer $N\geq 2$ and $C'$ be an element of $\mathrm{Cl}(\F)$ satisfying $\mathcal{N}(\mathfrak{a}_{C'})\equiv 1\pmod{\o_{K_\F}}$ for some integral ideal $\mathfrak{a}_{C'}$ in $C'$ prime to $6N$.
Then any $N$-th root of the value ${g_\F(C')}/{g_\F(C_0)}$ is a unit in $K_\F$.
\end{lemma}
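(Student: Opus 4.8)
The plan is to realize the quotient $g_\F(C')/g_\F(C_0)$ (already known to be a unit of $K_\F$ by Proposition \ref{Galois action}\,(iii)) as an element of the group $\Phi_\F(\o_{K_\F})$, to apply Proposition \ref{unit group} so as to write it as an $N$-th power of a unit of $K_\F$, and finally to pass to arbitrary $N$-th roots by invoking $\zeta_N\in K_\F$.

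First I would write $g_\F(C')/g_\F(C_0)=\prod_{C\in\mathrm{Cl}(\F)}g_\F(C)^{n(C)}$ with $n(C')=1$, $n(C_0)=-1$ and $n(C)=0$ for every other class $C$, and then check the two congruences in the definition of $\Phi_\F(\o_{K_\F})$. The first, $\sum_C n(C)=1-1=0$, is trivial. For the second I would take $\O_K$ as a representative of the unit class $C_0$: it is prime to $6N$ and $\mathcal{N}(\O_K)=1$, so $\mathcal{N}(\mathfrak{a}_{C_0})\equiv 1\pmod{\o_{K_\F}}$, this residue being independent of the chosen representative. Combined with the hypothesis $\mathcal{N}(\mathfrak{a}_{C'})\equiv 1\pmod{\o_{K_\F}}$ this gives $\sum_C n(C)\mathcal{N}(\mathfrak{a}_C)=\mathcal{N}(\mathfrak{a}_{C'})-\mathcal{N}(\mathfrak{a}_{C_0})\equiv 0\pmod{\o_{K_\F}}$, and hence $g_\F(C')/g_\F(C_0)\in\Phi_\F(\o_{K_\F})$.

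By Proposition \ref{unit group}, $\Phi_\F(\o_{K_\F})=(\mathfrak{R}_\F^*)^N$, so $g_\F(C')/g_\F(C_0)=v^N$ for some $v\in\mathfrak{R}_\F^*$; in particular $v$ is a unit of $K_\F$. Every $N$-th root of $g_\F(C')/g_\F(C_0)$ then has the form $\zeta v$ with $\zeta^N=1$. Since $\F=N\O_K$, Proposition \ref{complex multiplication} gives $K_\F=K_{(N)}=K(h(\theta):h\in\mathcal{F}_{N,\theta})$, and the constant $\zeta_N$, regarded as a function, belongs to $\mathcal{F}_{N,\theta}$ (it lies in $\mathbb{C}(X(N))$, its single Fourier coefficient lies in $\mathbb{Q}(\zeta_N)$, and it is finite at $\theta$); therefore $\zeta_N\in K_\F$ and so $\zeta v\in K_\F$. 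Finally $\zeta v$ is a unit of $K_\F$, being a product of the unit $v$ and the root of unity $\zeta$, both of which are algebraic integers with algebraic-integer inverses. This completes the argument.

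There is no genuine obstacle here: the only point to get right is that the hypothesis $\mathcal{N}(\mathfrak{a}_{C'})\equiv 1\pmod{\o_{K_\F}}$ is exactly the condition required to place the quotient in $\Phi_\F(\o_{K_\F})$, and that the choice $\F=N\O_K$ makes $N$ the correct exponent in Proposition \ref{unit group}; the remaining checks (that $\mathcal{N}(\O_K)=1$ and that $\zeta_N\in K_\F$) are routine.
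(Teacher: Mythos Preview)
Your proof is correct and follows essentially the same approach as the paper: show $g_\F(C')/g_\F(C_0)\in\Phi_\F(\o_{K_\F})$, apply Proposition~\ref{unit group} to write it as $u^N$ with $u\in\mathfrak{R}_\F^*$ a unit of $K_\F$, and then use $\zeta_N\in K_\F$ to conclude. The only difference is cosmetic: the paper invokes $\zeta_N\in\mathbb{Q}(\zeta_N)\subset K_{(N)}=K_\F$ directly rather than your route through $\mathcal{F}_{N,\theta}$, and it leaves the verification of membership in $\Phi_\F(\o_{K_\F})$ implicit where you spell it out.
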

\begin{proof}
Observe that ${g_\F(C')}/{g_\F(C_0)}\in \Phi_\F(\o_{K_\F})$.
It then follows from Proposition \ref{unit group} that 
\begin{equation*}
\frac{g_\F(C')}{g_\F(C_0)}=u^N
\end{equation*}
for some $u\in \mathfrak{R}_\F^*$.
Since $u$ is a unit in $K_\F$ and $\zeta_N\in\mathbb{Q}(\zeta_N)\subset K_{(N)}= K_\F$, any $N$-th root of the value ${g_\F(C')}/{g_\F(C_0)}$ is also a unit in $K_\F$.
\end{proof}

Let $\mathfrak{f}$ be a nontrivial integral ideal of $K$ and $\chi$ be a character of $\mathrm{Cl}(\F)$.
The conductor of $\chi$ is defined by the largest ideal $\mathfrak{g}$ dividing $\F$ such that $\chi$ is obtained by a composition of a character of $\mathrm{Cl}(\mathfrak{g})$ and the natural homomorphism $\mathrm{Cl}(\F)\rightarrow \mathrm{Cl}(\mathfrak{g})$, and we denote it by $\F_\chi$.
Similarly, if $\chi'$ is a character of $(\O_K /\mathfrak{f})^\times$ then 
we define the conductor of $\chi'$  by the largest ideal $\mathfrak{g}$ dividing $\F$ for which $\chi'$ is induced by a composition of a character of $(\O_K /\mathfrak{g})^\times$ with the natural homomorphism $(\O_K /\mathfrak{f})^\times\rightarrow (\O_K /\mathfrak{g})^\times$, and denote it by $\F_{\chi'}$.
The map 
\begin{equation}\label{map}
\begin{array}{ccc}
(\O_K /\mathfrak{f})^\times&\longrightarrow &\mathrm{Cl}(\F)\\
\alpha +\F&\longmapsto& \left[(\alpha)\right]
\end{array}
\end{equation}
is a well-defined homomorphism whose kernel is
\begin{equation*}
\{\alpha+\F\in (\O_K/\F)^\times ~|~\alpha\in \O_K^\times \} .
\end{equation*}
For a character $\chi$ of $\mathrm{Cl}(\F)$, we derive a character $\chi'$ of $(\O_K /\mathfrak{f})^\times$ by composing with the map (\ref{map}). 
Then, by definition $\F_\chi=\F_{\chi'}$ is immediate.

\par

Now let $\chi$ be a nontrivial character of $\mathrm{Cl}(\F)$ with $\F\neq\O_K$ and $\chi_0$ be the primitive character of $\mathrm{Cl}(\F_\chi)$ corresponding to $\chi$.
We define the \textit{Stickelberger element} and the \textit{L-function} for $\chi$ by
\begin{eqnarray*}
S_\F(\chi,g_\F)&=&\sum_{C\in\mathrm{Cl}(\F)}\chi(C)\log|g_\F(C)|,\\
L_\F(s,\chi)&=&\sum_{\substack{(0)\neq \mathfrak{a}\subset \O_K \\ \gcd(\mathfrak{a},\F)=1 }} \frac{\chi(\mathfrak{a})}{\mathcal{N}(\mathfrak{a})^s}\quad (s\in\mathbb{C}),
\end{eqnarray*}
respectively.
The second Kronecker limit formula explains the relation between the Stickelberger element and the L-function as follows:

\begin{proposition}\label{L-function relation}
Let $\chi$ be a nontrivial character of $\mathrm{Cl}(\F)$ with $\F_\chi\neq\O_K$.
Then we have
\begin{equation*}
L_{\F_\chi}(1,\chi_0)\prod_{\substack{\P\,|\,\F \\ \P\,\nmid\,\F_\chi }}(1-\overline{\chi_0}([\P]))
=-\frac{2\pi \chi_0([\gamma\mathfrak{d}_K \F_\chi])}{6N(\F_\chi) \o(\F_\chi)T_\gamma(\overline{\chi_0})\sqrt{-d_K}}  \cdot S_\F(\overline{\chi},g_\F),
\end{equation*}
where $\mathfrak{d}_K$ is the different of $K/\mathbb{Q}$, $\gamma$ is an element of $K$ such that $\gamma\mathfrak{d}_K\F_\chi$ is an integral ideal of $K$ prime to $\F_\chi$,
$N(\F_\chi)$ is the smallest positive integer in $\F_\chi$,
$\o(\F_\chi)$ is the number of roots of unity in $K$ which are $\equiv 1\pmod{\F_\chi}$ and
\begin{equation*}
T_\gamma(\overline{\chi_0})=\sum_{x+\F_\chi\in(\O_K/\F_\chi)^\times}\overline{\chi_0}([x\O_K])e^{2\pi i \mathrm{Tr}_{K/\mathbb{Q}}(\gamma x)}.
\end{equation*}

\end{proposition}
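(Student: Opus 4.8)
The statement in question is the second Kronecker limit formula (Proposition~\ref{L-function relation}), which relates the Stickelberger element built from Siegel--Ramachandra invariants to the value at $s=1$ of the primitive Hecke $L$-function attached to $\chi_0$. The plan is to reduce everything to the classical second limit formula for a single real-analytic Eisenstein--Kronecker series and then bookkeep the character sum, the conductor, and the Euler factors at primes dividing $\F$ but not $\F_\chi$.

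First I would recall the analytic input. For a lattice $L=[\o_1,\o_2]$ with $z=\o_1/\o_2\in\mathbb{H}$ and a torsion point $t=r_1\o_1+r_2\o_2$, the Kronecker--Eisenstein series
\begin{equation*}
K_L(t,s)=\sum_{\o\in L}\,^{\prime}\,\frac{e^{2\pi i\,\mathrm{Im}(t\overline{\o})/\mathrm{Im}(\overline{\o_1}\o_2)}}{|t+\o|^{2s}}
\end{equation*}
has an analytic continuation to $s=1$, and the classical second Kronecker limit formula identifies $K_L(t,1)$ (up to an explicit elementary factor involving $\mathrm{Im}(z)$ and $\mathrm{Im}(t/\o_2)$) with $-\,\tfrac{\pi}{\mathrm{Im}(z)}\log\bigl|g_{\mathbf r}(z)\bigr|^2$, i.e.\ with $\log|g^{12}(t,L)|$ up to a rational multiple --- this is \cite[Chapter~20]{Lang} or \cite[Chapter~11]{Kubert}. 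Next I would expand $L_{\F_\chi}(s,\chi_0)$ over ideal classes in $\mathrm{Cl}(\F_\chi)$: write $\chi_0$ as a character of $(\O_K/\F_\chi)^\times$ via the map \eqref{map}, group the ideals prime to $\F_\chi$ by their class, and use that each such ideal has a generator realizing the relevant $1/\F_\chi$-torsion point on $\F_\chi\mathfrak{c}^{-1}$. This rewrites $L_{\F_\chi}(s,\chi_0)$ as a finite sum over $\mathrm{Cl}(\F_\chi)$ of Kronecker--Eisenstein series, and evaluating at $s=1$ via the limit formula produces $\sum_{C}\chi_0(C)\log|g^{12N(\F_\chi)}(1,\F_\chi\mathfrak{c}_C^{-1})|=S_{\F_\chi}(\overline{\chi_0},g_{\F_\chi})$ after renaming by $\overline{\chi}$. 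The Gauss-sum factor $T_\gamma(\overline{\chi_0})$ and the term $\chi_0([\gamma\mathfrak{d}_K\F_\chi])$ arise from disentangling the additive character $e^{2\pi i\,\mathrm{Im}(t\overline{\o})/\cdots}$: choosing $\gamma$ with $\gamma\mathfrak d_K\F_\chi$ integral and prime to $\F_\chi$ lets one write $\mathrm{Im}(t\overline{\o})/\mathrm{area}$ as $\mathrm{Tr}_{K/\mathbb{Q}}(\gamma x)$ for a suitable representative $x$, and performing the sum over $(\O_K/\F_\chi)^\times$ pulls out exactly $T_\gamma(\overline{\chi_0})$ and the normalizing class factor, together with the constants $6$, $N(\F_\chi)$, $\o(\F_\chi)$ and $\sqrt{-d_K}$ (the last from the covolume of $\O_K$, i.e.\ $\sqrt{|d_K|}$).

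Finally, to pass from $S_{\F_\chi}(\overline{\chi_0},g_{\F_\chi})$ on the conductor to $S_\F(\overline{\chi},g_\F)$ on the full modulus $\F$, I would use the norm-compatibility (distribution relation) of Siegel--Ramachandra invariants: if $\F=\F_\chi\cdot\mathfrak{m}$, then summing $g_\F(C)$ over the fibers of $\mathrm{Cl}(\F)\to\mathrm{Cl}(\F_\chi)$ relates $g_\F$ to $g_{\F_\chi}$ through the primes $\P\mid\F$, $\P\nmid\F_\chi$, and since $\chi$ is inflated from $\chi_0$ the fiberwise sum of $\overline{\chi}(C)$ is constant on each fiber; the extra primes contribute precisely the Euler factors $\prod_{\P\mid\F,\ \P\nmid\F_\chi}(1-\overline{\chi_0}([\P]))$ on the $L$-function side (this is the standard passage between a primitive $L$-function and an imprimitive one). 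Combining the two identities and matching constants gives the asserted formula.

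The main obstacle, and the step requiring the most care, is the precise bookkeeping of the elementary prefactor in the second limit formula and the additive-character/Gauss-sum manipulation: one must track the exact power of $g$ (the $12N$ versus $12N(\F_\chi)$ normalization), the correct representative $\gamma$ and the resulting class $[\gamma\mathfrak d_K\F_\chi]$, and the factor $\o(\F_\chi)$ coming from the kernel of \eqref{map} (units $\equiv 1\bmod\F_\chi$), so that no stray rational constant or root of unity is lost. Everything else --- the analytic continuation, the class-sum reorganization, and the primitive-to-imprimitive descent --- is standard once this normalization is pinned down, and indeed the result is quoted in \cite[Chapter~11]{Kubert} and \cite[Chapter~22]{Lang}, so I would ultimately cite those for the detailed constant-chasing rather than reproduce it.
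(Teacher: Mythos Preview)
Your proposal is correct and aligns with the paper's approach: the paper's entire proof is the citation \cite[Chapter 11 \S2, LF 2]{Kubert}, and you arrive at the same endpoint, explicitly noting you would ultimately cite \cite{Kubert} and \cite{Lang} for the constant-chasing. The expository sketch you supply (Kronecker--Eisenstein series, class-sum reorganization, Gauss-sum extraction, primitive-to-imprimitive descent) is accurate extra context that the paper simply omits.
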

\begin{proof}
\cite[Chapter 11 \S2, LF 2]{Kubert}.
\end{proof}

\begin{remark}\label{Stickremark}
 Since $\chi_0$ is a nontrivial character of $\mathrm{Cl}(\mathfrak{f}_\chi)$, we obtain $L_{\mathfrak{f}_\chi}(1,\chi_0)\neq0$ (\cite[Chapter V, Theorem 10.2]{Janusz}).
Furthermore, the Gauss sum $T_\gamma(\overline{\chi}_0)$ is also nonzero (\cite[Chapter 22 $\S$1, G 3]{Lang}).
If every prime ideal factor of $\mathfrak{f}$ divides $\mathfrak{f}_\chi$ then we understand the Euler factor 
$\prod_{{\P\,|\,\F,~  \P\,\nmid\,\F_\chi }}(1-\overline{\chi_0}([\P]))$ to be $1$, and hence
we conclude $S_\F(\overline{\chi},g_\F)\neq 0$.
\end{remark}

\section{Generation of ray class fields by Siegel-Ramachandra invariants}\label{Generation of ray class fields by Siegel-Ramachandra invariants}

We shall improve in this section the result of Schertz \cite{Schertz} concerning construction of ray class fields by means of
Siegel-Ramachandra invariants.
\par
Let $K$ be an imaginary quadratic field of discriminant $d_K$ and $\o_K$ be the number of roots of unity in $K$.
For a nontrivial integral ideal $\F$ of $K$, we regard $\o(\F)$ as the number of roots of unity in $K$ which are $\equiv 1\pmod{\F}$.
Let $\phi$ be the Euler function for ideals, namely
\begin{equation}\label{Euler function}
\phi(\F)=\left|(\O_K/\F)^\times\right|=\mathcal{N}(\F)\displaystyle\prod_{\substack{\mathfrak{p}\,|\, \F\\ \mathfrak{p}~\textrm{prime} }}\left(1-\frac{1}{\mathcal{N}(\mathfrak{p})}\right).
\end{equation}

\begin{proposition}\label{order of ray class field}
If $\F$ is a nontrivial integral ideal of $K$, then we get
\begin{equation*}
[K_\F :K]=h_K \phi(\F)\frac{\o(\F)}{\o_K}
\end{equation*}
where $h_K$ is the class number of $K$.
\end{proposition}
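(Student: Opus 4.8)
The plan is to compute $[K_\F:K]$ via the exact sequence coming from class field theory that relates the ray class group $\mathrm{Cl}(\F)$ to the ideal class group $\mathrm{Cl}(\O_K)$ of $K$. First I would recall the standard exact sequence
\begin{equation*}
\O_K^\times \longrightarrow (\O_K/\F)^\times \longrightarrow \mathrm{Cl}(\F) \longrightarrow \mathrm{Cl}(\O_K) \longrightarrow 1,
\end{equation*}
where the second map is the one in (\ref{map}) and the third is the natural surjection obtained by forgetting the congruence condition. Exactness on the right holds because every ideal class of $K$ contains an ideal prime to $\F$; exactness in the middle is the statement that the kernel of $\mathrm{Cl}(\F)\to\mathrm{Cl}(\O_K)$ consists precisely of classes of principal ideals $(\alpha)$ with $\alpha$ prime to $\F$, which is the image of $(\O_K/\F)^\times$ under (\ref{map}); and the kernel of that map is exactly the image of $\O_K^\times$, as already noted in the excerpt just after (\ref{map}).

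Next, since $\mathrm{Gal}(K_\F/K)\cong\mathrm{Cl}(\F)$ via the Artin map (\ref{artin map}), taking orders in the exact sequence gives
\begin{equation*}
[K_\F:K] = |\mathrm{Cl}(\F)| = \frac{|(\O_K/\F)^\times|\cdot|\mathrm{Cl}(\O_K)|}{|\,\mathrm{image of }\O_K^\times\,|}.
\end{equation*}
Here $|\mathrm{Cl}(\O_K)| = h_K$ by definition, and $|(\O_K/\F)^\times| = \phi(\F)$ by (\ref{Euler function}). It remains to identify the order of the image of $\O_K^\times$ in $(\O_K/\F)^\times$. That image is the quotient $\O_K^\times/(\O_K^\times\cap(1+\F))$, i.e.\ $\O_K^\times$ modulo the units that are $\equiv 1\pmod\F$; by definition $|\O_K^\times\cap(1+\F)| = \o(\F)$ and $|\O_K^\times| = \o_K$, so the image has order $\o_K/\o(\F)$. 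Substituting yields
\begin{equation*}
[K_\F:K] = \frac{\phi(\F)\,h_K}{\o_K/\o(\F)} = h_K\,\phi(\F)\,\frac{\o(\F)}{\o_K},
\end{equation*}
which is the asserted formula.

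The main point requiring care — really the only non-formal step — is verifying exactness of the four-term sequence, in particular that the kernel of $\mathrm{Cl}(\F)\to\mathrm{Cl}(\O_K)$ is generated by classes of principal ideals prime to $\F$ and that this kernel is the full image of $(\O_K/\F)^\times$ under (\ref{map}); both are standard facts from the ideal-theoretic formulation of class field theory (see e.g.\ Janusz, Chapter IV, or Cox), and the kernel description of (\ref{map}) is already supplied in the excerpt. One should also note that $\o(\F)\mid\o_K$ so the expression is an integer, which is automatic since $\O_K^\times\cap(1+\F)$ is a subgroup of $\O_K^\times$. With these observations the counting argument is immediate and the proof is complete.
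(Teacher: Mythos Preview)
Your argument is correct: the four-term exact sequence
\[
\O_K^\times \longrightarrow (\O_K/\F)^\times \longrightarrow \mathrm{Cl}(\F) \longrightarrow \mathrm{Cl}(\O_K) \longrightarrow 1
\]
is standard, and for an imaginary quadratic field $\O_K^\times$ is the finite group of roots of unity, so the counting goes through exactly as you wrote. The only remark is that the paper does not actually give a proof of this proposition at all --- it simply cites \cite[Chapter VI, Theorem 1]{Lang2} --- so there is no ``paper's approach'' to compare against; what you have supplied is precisely the textbook argument behind that citation.
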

\begin{proof}
\cite[Chapter VI, Theorem 1]{Lang2}.
\end{proof}

\begin{lemma}\label{extension}
Let $H\subset G$ be two finite abelian groups, $g\in G\setminus H$ and $n$ be the order of the coset $[g]$ in $G/H$.
Then for any character $\chi$ of $H$, we can extend it to a character $\psi$ of $G$ such that $\psi(g)$ is any fixed $n$-th root of $\chi(g^n)$.

\end{lemma}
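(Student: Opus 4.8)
The plan is to reduce the statement to a standard fact about finite abelian groups by working one cyclic step at a time. First I would observe that it suffices to extend $\chi$ from $H$ to the intermediate group $H' = \langle H, g\rangle$, since once a character of $H'$ realizing the prescribed value at $g$ is in hand, any character of $H'$ extends to a character of the full group $G$ (finite abelian groups are their own Pontryagin bidual, so restriction of characters $\widehat{G}\to\widehat{H'}$ is surjective); the extension from $H'$ to $G$ imposes no further constraint on the value at $g$. So the real content is the case $G = \langle H, g\rangle$.

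In that case every element of $G$ can be written as $g^i h$ with $h\in H$ and $i\in\mathbb{Z}$, and $n$ is the smallest positive integer with $g^n\in H$. I would first check that a putative formula $\psi(g^i h) = \xi^i \chi(h)$, where $\xi$ is a fixed $n$-th root of $\chi(g^n)$, is well-defined: if $g^i h = g^j h'$ then $g^{i-j} = h' h^{-1}\in H$, so $n \mid i-j$, say $i - j = nk$, whence $h' h^{-1} = g^{nk} = (g^n)^k$ and therefore $\chi(h' h^{-1}) = \chi(g^n)^k = \xi^{nk} = \xi^{i-j}$, which rearranges to $\xi^i\chi(h) = \xi^j\chi(h')$. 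Well-definedness is really the only place anything happens, and it is exactly where the hypothesis that $\xi^n = \chi(g^n)$ gets used. Once well-defined, $\psi$ is visibly a homomorphism $G\to\mathbb{C}^\times$ because both $i\mapsto \xi^i$ and $\chi$ are, and it restricts to $\chi$ on $H$ (take $i=0$) and sends $g$ to $\xi$ (take $i=1$, $h=1$).

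I do not anticipate a serious obstacle here; the mild point to be careful about is that $\psi$ as defined a priori lands in $\mathbb{C}^\times$ rather than in the unit circle, but since $G$ is finite every value is a root of unity, so $\psi$ is automatically a character in the usual sense. Another harmless subtlety is that the representation $g = g^i h$ is far from unique, which is precisely why the well-definedness check above is needed and is not a triviality. I would present the argument in the two stages just described — first the reduction to $G = \langle H, g\rangle$ via surjectivity of character restriction, then the explicit construction of $\psi$ on $\langle H, g\rangle$ with the well-definedness verification — and that completes the proof of Lemma~\ref{extension}.

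Alternatively, and perhaps more cleanly for exposition, one can avoid the explicit formula: note $H' = \langle H, g\rangle \cong (H \times \langle t\rangle)/R$ where $t$ has order $n$ and $R$ is the cyclic subgroup generated by $(g^{-n}, t^n) = (g^{-n}, 1)\cdot$, i.e. realize $H'$ as a pushout; a character $\xi$ of $\langle t\rangle$ with $\xi(t)^n = \xi(t^n)$ matching $\chi(g^n)$ descends to $H'$ because it agrees with $\chi$ on the amalgamated subgroup. Either route works; I would go with whichever the surrounding text makes most natural, and I expect the whole proof to occupy only a few lines.
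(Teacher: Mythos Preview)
Your proof is correct. The paper does not give its own argument for this lemma; it merely cites \cite[Chapter VI, Proposition 1]{Serre}, whose proof is essentially the one you have written out (extend $\chi$ to $\langle H,g\rangle$ by the formula $\psi(g^ih)=\xi^i\chi(h)$, with the well-definedness check hinging on $\xi^n=\chi(g^n)$, then extend further to $G$ by induction on the index, equivalently by surjectivity of character restriction).
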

\begin{proof}
\cite[Chapter VI, Proposition 1]{Serre}.
\end{proof}

Now, let $\F$ be a nontrivial proper integral ideal of $K$ with prime ideal factorization 
\begin{equation*}
\F=\prod_{i=1}^r \mathfrak{p}_i^{n_i}
\end{equation*}
and $C_0$ be the unit class in Cl$(\F)$.
Consider the quotient group 
\begin{equation*}
\mathbf{G}=(\O_K/\mathfrak{f})^\times  / \{\alpha+\mathfrak{f}\in (\O_K/\mathfrak{f})^\times~|~\alpha\in \O_K^\times \}.
\end{equation*}
Then one can view $\mathbf{G}$ as a subgroup of $\mathrm{Cl}(\F)$ via the map (\ref{map}).
For each $i$, we set
\begin{equation*}
\mathbf{G}_i=(\O_K/\mathfrak{p}_i^{n_i})^\times  / \{\alpha+\mathfrak{p}_i^{n_i}\in (\O_K/\mathfrak{p}_i^{n_i})^\times~|~\alpha\in \O_K^\times \}.
\end{equation*}

\begin{proposition}\label{existence of character}
Assume that $|\mathbf{G}_i|>2$ for every $i$.
Then, for any class $C ~(\neq C_0)\in$ $\mathrm{Cl}(\F)$ there exists a character $\chi$ of $\mathrm{Cl}(\F)$ such that $\chi(C)\neq 1$ and $\mathfrak{p}_i\, |\, \mathfrak{f}_\chi$ for all $i$.
\end{proposition}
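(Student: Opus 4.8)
The plan is to build $\chi$ character by character on the pieces $\mathbf{G}_i$ and then assemble, using the fact that $\mathrm{Cl}(\F)$ contains $\mathbf{G}=\prod_i\mathbf{G}_i$ (by the Chinese remainder decomposition of $(\O_K/\F)^\times$ together with the map (\ref{map})) as a subgroup. First I would record that $\mathbf{G}_i$ is exactly the image of $(\O_K/\mathfrak{p}_i^{n_i})^\times$ under the local version of (\ref{map}), so that $\mathbf{G}\cong\prod_{i=1}^r\mathbf{G}_i$ inside $\mathrm{Cl}(\F)$, and that a character $\chi$ of $\mathrm{Cl}(\F)$ has $\mathfrak{p}_i\mid\F_\chi$ precisely when the restriction of the associated character $\chi'$ of $(\O_K/\F)^\times$ to the $i$-th factor $(\O_K/\mathfrak{p}_i^{n_i})^\times$ is nontrivial; equivalently, when the restriction of $\chi$ to $\mathbf{G}_i$ is nontrivial. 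This reduces the problem to: produce a character of $\mathrm{Cl}(\F)$ nontrivial on the fixed class $C$ and nontrivial on each $\mathbf{G}_i$.

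Next I would split into two cases according to the position of $C$. \emph{Case 1: $C\in\mathbf{G}$.} Write $C=(C_i)_i$ with not all $C_i$ trivial, say $C_{i_0}\neq 1$. On the factor $\mathbf{G}_{i_0}$ choose a character $\eta_{i_0}$ with $\eta_{i_0}(C_{i_0})\neq 1$ — possible since $C_{i_0}$ is a nontrivial element of the finite abelian group $\mathbf{G}_{i_0}$. For every other index $i$, since $|\mathbf{G}_i|>2$ the group $\mathbf{G}_i$ admits a nontrivial character $\eta_i$ (a group of order $>2$ is never killed by all its characters, indeed has a non-order-$\le 2$ quotient or at least two distinct nontrivial characters); pick any such $\eta_i$. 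Then $\eta=\prod_i\eta_i$ is a character of $\mathbf{G}$ that is nontrivial on every $\mathbf{G}_i$ and has $\eta(C)=\eta_{i_0}(C_{i_0})\neq 1$. Finally extend $\eta$ from the subgroup $\mathbf{G}$ to all of $\mathrm{Cl}(\F)$ by Lemma \ref{extension} (applied repeatedly, coset by coset, to a chain $\mathbf{G}=H_0\subset H_1\subset\cdots\subset H_m=\mathrm{Cl}(\F)$); any such extension $\chi$ still satisfies $\chi|_{\mathbf{G}_i}=\eta_i\neq 1$ and $\chi(C)\neq 1$.

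\emph{Case 2: $C\notin\mathbf{G}$.} Here $[C]$ is a nontrivial coset in $\mathrm{Cl}(\F)/\mathbf{G}$, of some order $n>1$. Start as in Case 1 by choosing on $\mathbf{G}$ a character $\eta=\prod_i\eta_i$ with each $\eta_i$ nontrivial (using $|\mathbf{G}_i|>2$); this handles the conductor condition. Then apply Lemma \ref{extension} to the pair $\mathbf{G}\subset\langle\mathbf{G},C\rangle$: since $C^n\in\mathbf{G}$, we may extend $\eta$ to $\langle\mathbf{G},C\rangle$ with $\chi(C)$ equal to a chosen $n$-th root of $\eta(C^n)$, and among the $n$ choices of root at least one — in fact, at least $n-1$ — is $\neq 1$, so we may arrange $\chi(C)\neq 1$. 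Finally extend further to all of $\mathrm{Cl}(\F)$ by Lemma \ref{extension} again; the values on $\mathbf{G}_i$ and on $C$ are unchanged, so $\chi$ works.

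The only genuinely delicate point is the interaction between the two requirements in Case 2: we must make $\chi$ nontrivial on $C$ \emph{without} destroying nontriviality on the $\mathbf{G}_i$. The mechanism that makes this painless is that Lemma \ref{extension} lets us fix the $\mathbf{G}$-part first and only afterwards choose the value at $C$, and the $n$-th root of $\eta(C^n)$ we assign to $\chi(C)$ can avoid $1$ whenever $n>1$; the hypothesis $|\mathbf{G}_i|>2$ is used solely to guarantee each local factor has a nontrivial character in the first place. Assembling these observations and invoking Proposition \ref{ray class conjugate}'s description of conductors (or rather the definition of $\F_\chi=\F_{\chi'}$ recalled before (\ref{map})) to translate "$\chi|_{\mathbf{G}_i}\neq 1$" into "$\mathfrak{p}_i\mid\F_\chi$" completes the proof.
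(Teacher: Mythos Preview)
Your argument has a structural gap: the asserted isomorphism $\mathbf{G}\cong\prod_{i=1}^r\mathbf{G}_i$ is false in general. By the Chinese remainder theorem $(\O_K/\F)^\times\cong\prod_i(\O_K/\P_i^{n_i})^\times$, but in forming $\mathbf{G}$ one quotients by the \emph{diagonal} image of $\O_K^\times$, whereas $\prod_i\mathbf{G}_i$ is the quotient by the full product of the local unit images. What one actually obtains is only a surjection $\mathbf{G}\twoheadrightarrow\prod_i\mathbf{G}_i$, whose kernel is nontrivial as soon as $r\ge 2$ (for example, with $\O_K^\times=\{\pm1\}$ and $r=2$ the class of the element congruent to $(-1,1)$ lies in it). Your Case~1 then fails outright for any nontrivial $C\in\mathbf{G}$ lying in this kernel: every component $C_i$ is trivial, so any character pulled back from $\prod_i\mathbf{G}_i$ sends $C$ to $1$. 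Even away from the kernel your formula $\eta(C)=\eta_{i_0}(C_{i_0})$ is wrong---one has $\eta(C)=\prod_i\eta_i(C_i)$, and forcing this product to be $\neq 1$ while keeping every $\eta_i$ nontrivial is exactly the delicate point where the full strength of $|\mathbf{G}_i|>2$ (not merely $|\mathbf{G}_i|>1$, which already guarantees a nontrivial character) is needed.

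The paper avoids any such decomposition. It starts from an arbitrary character $\chi$ with $\chi(C)\neq 1$ and then, for each $\P_i$ missing from $\F_\chi$, multiplies by an auxiliary character $\psi''$ supported at $\P_i$ so that $\P_i\mid\F_{\chi\psi''}$, $\F_\chi\mid\F_{\chi\psi''}$, and $\chi\psi''(C)\neq 1$ simultaneously. The construction splits according to whether $\mathbf{G}_i$ is generated by the image of $\beta$ (where $C^n=[(\beta)]$, $n$ being the order of $C$ modulo $\mathbf{G}$); in the cyclic case one must choose a nontrivial character of $\mathbf{G}_i$ whose value at the generator avoids the two values $1$ and $\chi(C^n)^{-1}$, and this is where $|\mathbf{G}_i|>2$ is genuinely used.
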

\begin{proof}
Since $C\neq C_0$, there is a character $\chi$ of Cl$(\F)$ such that $\chi(C)\neq 1$.
We set $n$ to be the order of $C$ in the quotient group $\mathrm{Cl}(\F)/\mathbf{G}$.
Then $C^n=\left[(\beta)\right]$ for some $\beta\in\O_K$ which is relatively prime to $\F$.
Suppose that $\mathfrak{f}_\chi$ is not divided by $\mathfrak{p}_i$ for some $i$.
If $\mathbf{G}_i\neq\langle\beta+\P_i^{n_i}\rangle$ then we can find a nontrivial character $\psi$ of $(\O_K/\mathfrak{p}_i^{n_i})^\times$ for which $\psi$ is trivial on
$\{\alpha+{\mathfrak{p}_i^{n_i}}\in (\O_K/\mathfrak{p}_i^{n_i})^\times~|~\alpha\in \O_K^\times \}$ and $\psi(\beta+{\mathfrak{p}_i^{n_i}})=1$.
By composing with a natural homomorphism $(\O_K /\mathfrak{f})^\times\rightarrow (\O_K /\mathfrak{p}_i^{n_i})^\times$, 
we are able to extend $\psi$ to a character $\psi'$ of $(\O_K/\F)^\times$ whose conductor is divisible only by $\mathfrak{p}_i$.
Observe that $\psi'$ is trivial on $ \{\alpha+{\mathfrak{f}}\in (\O_K/\mathfrak{f})^\times~|~\alpha\in \O_K^\times \}$ and so $\psi'$ becomes a character of $\mathbf{G}$.
It then follows from Lemma \ref{extension} that we can also extend $\psi'$ to a character $\psi''$ of $\mathrm{Cl}(\F)$ such that $\psi''(C)=1$.
Now assume that $\mathbf{G}_i=\langle\beta+{\P_i^{n_i}}\rangle$.
Since $|\mathbf{G}_i|>2$, there is a nontrivial character $\psi$ of $(\O_K/\mathfrak{p}_i^{n_i})^\times$ so that $\psi$ is trivial on
$\{\alpha+{\mathfrak{p}_i^{n_i}}\in (\O_K/\mathfrak{p}_i^{n_i})^\times~|~\alpha\in \O_K^\times \}$ and $\psi(\beta+{\mathfrak{p}_i^{n_i}})\neq 1, \chi(C^n)^{-1}$.
In a similar way, one can extend $\psi$ to a character $\psi''$ of $\mathrm{Cl}(\F)$ in such a way that $\F_{\psi''}$ is divisible only by $\P_i$ and $\psi''(C)\neq \chi(C)^{-1}$.
Therefore, the character $\chi\psi''$ of $\mathrm{Cl}(\F)$ satisfies $\chi\psi''(C)\neq 1$, 
$\mathfrak{p}_i\, |\, \mathfrak{f}_{\chi\psi''}$ and $\mathfrak{f}_\chi\, |\, \mathfrak{f}_{\chi\psi''}$ in both cases.
By continuing this process for all $i$, we achieve a desired character in the end.

\end{proof}

\begin{lemma}\label{order condition}
With the notations as above,  $|\mathbf{G}_i|=1$ or $2$ if and only if $\mathfrak{p}_i^{n_i}$ satisfies one of the following conditions:
\begin{itemize}
\item[] \textbf{Case 1} : $K\neq \mathbb{Q}(\sqrt{-1}), \mathbb{Q}(\sqrt{-3})$ 
\begin{itemize}
\item[$\bullet$] $2$ is not inert in $K$, $\P_i$ is lying over $2$ and $n_i=1,2$ or $3$.
\item[$\bullet$] $3$ is not inert in $K$, $\P_i$ is lying over $3$ and $n_i=1$.
\item[$\bullet$] $5$ is not inert in $K$, $\P_i$ is lying over $5$ and $n_i=1$.
\end{itemize}
\item[] \textbf{Case 2} : $K=\mathbb{Q}(\sqrt{-1})$ 
\begin{itemize}
\item[$\bullet$] $\P_i$ is lying over $2$ and $n_i=1,2,3$ or $4$.
\item[$\bullet$] $\P_i$ is lying over $3$ and $n_i=1$.
\item[$\bullet$] $\P_i$ is lying over $5$ and $n_i=1$.
\end{itemize}
\item[] \textbf{Case 3} : $K=\mathbb{Q}(\sqrt{-3})$ 
\begin{itemize}
\item[$\bullet$] $\P_i$ is lying over $2$ and $n_i=1$ or $2$.
\item[$\bullet$] $\P_i$ is lying over $3$ and $n_i=1$ or $2$.
\item[$\bullet$] $\P_i$ is lying over $7$ and $n_i=1$.
\item[$\bullet$] $\P_i$ is lying over $13$ and $n_i=1$.
\end{itemize}
\end{itemize}
\end{lemma}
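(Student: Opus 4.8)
The plan is to compute the group $\mathbf{G}_i = (\O_K/\P_i^{n_i})^\times / \{\alpha + \P_i^{n_i} : \alpha \in \O_K^\times\}$ directly and determine exactly when its order is $1$ or $2$. Since $|\O_K^\times| = \o_K$ equals $2$, $4$, or $6$ according to whether $K \neq \Q(\sqrt{-1}), \Q(\sqrt{-3})$, or $K = \Q(\sqrt{-1})$, or $K = \Q(\sqrt{-3})$, and since the image of $\O_K^\times$ in $(\O_K/\P_i^{n_i})^\times$ has order dividing $\o_K$, we have
\begin{equation*}
|\mathbf{G}_i| \geq \frac{\phi(\P_i^{n_i})}{\o_K} = \frac{\mathcal{N}(\P_i)^{n_i - 1}(\mathcal{N}(\P_i) - 1)}{\o_K}.
\end{equation*}
So the first step is: for $|\mathbf{G}_i| \leq 2$ we need $\phi(\P_i^{n_i}) \leq 2\o_K$, which already forces $\mathcal{N}(\P_i)$ and $n_i$ to be small and bounds the rational prime $p$ lying under $\P_i$. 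Enumerating the finitely many candidate pairs $(\mathcal{N}(\P_i), n_i)$ against the relevant value of $\o_K$ produces precisely the list of numerical cases in the statement; the inert case $\mathcal{N}(\P_i) = p^2$ is ruled out immediately since then $\phi(\P_i) = p^2 - 1 \geq 3$ is already too big except for trivial overlaps, explaining why only split or ramified primes appear.

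The second and more delicate step is to verify, for each surviving candidate, whether $|\mathbf{G}_i|$ is genuinely $\leq 2$ — that is, whether the image of $\O_K^\times$ in $(\O_K/\P_i^{n_i})^\times$ is as large as the crude bound hopes. For $K \neq \Q(\sqrt{-1}), \Q(\sqrt{-3})$ this image is just $\{\pm 1\}$, which is trivial in $(\O_K/\P_i^{n_i})^\times$ only when $\P_i^{n_i} \mid (2)$, so one treats $p = 2$ separately (where $-1 \equiv 1$) from odd $p$; a short computation of $\phi(\P_i^{n_i})$ in each case gives the three bullets of Case 1. For $K = \Q(\sqrt{-1})$ the unit group is $\langle i \rangle$ of order $4$, and one must check that $i$ generates a subgroup of order $4$ in $(\O_K/\P_i^{n_i})^\times$ — this holds whenever $\P_i^{n_i} \nmid (1 - i)^k$ for small $k$; the prime above $2$ is $(1-i)$ with $(1-i)^2 = -2i$, so $\phi$ of the relevant powers must be computed carefully, yielding the bound $n_i \leq 4$. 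For $K = \Q(\sqrt{-3})$ the unit group has order $6$ generated by a primitive sixth root of unity $\zeta_6$; here the prime above $3$ is ramified, $(3) = (\sqrt{-3})^2$, and $\zeta_6 = (1 + \sqrt{-3})/2$, so one checks the order of the image of $\zeta_6$ modulo powers of $(\sqrt{-3})$ and modulo $(2)$ — the latter is inert-free only if $2$ splits, but in $\Q(\sqrt{-3})$, $2$ is inert, so $\P_i$ over $2$ has norm $4$ and $\phi(\P_i^{n_i}) = 4^{n_i} - 4^{n_i-1} \cdot$(adjust), giving $n_i \leq 2$; the prime $7$ and $13$ appear because $(\mathcal{N}(\P_i) - 1)/6 \leq 2$ allows $\mathcal{N}(\P_i) \in \{7, 13\}$ with $n_i = 1$.

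I expect the main obstacle to be the bookkeeping in Case 2 and Case 3, where the larger unit group interacts with ramified or inert primes of small norm and one must pin down the exact order of the cyclic image $\langle \zeta_{\o_K} + \P_i^{n_i} \rangle$ in $(\O_K/\P_i^{n_i})^\times$ rather than merely bounding it — a careless estimate would either admit spurious cases or miss genuine ones (e.g.\ deciding whether $n_i = 3$ survives for $p = 2$ in $\Q(\sqrt{-3})$ requires knowing $\zeta_6$ is a primitive root modulo $\P_i^3$ or not). Once the image of $\O_K^\times$ is correctly identified in each small case, comparing $\phi(\P_i^{n_i})$ against twice that order is a finite check, and the two directions of the ``if and only if'' follow simultaneously from the enumeration, since the list is constructed to be exactly the set of pairs for which the quotient has order at most $2$.
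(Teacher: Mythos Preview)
Your approach is essentially the paper's: compute $|\mathbf{G}_i|$ explicitly and enumerate the finitely many small cases. The paper collapses your two-step ``bound, then compute the image of $\O_K^\times$'' into the single exact formula
\[
|\mathbf{G}_i| \;=\; \phi(\P_i^{n_i})\,\frac{\omega(\P_i^{n_i})}{\omega_K},
\]
where $\omega(\P_i^{n_i})$ is the number of roots of unity in $K$ congruent to $1$ modulo $\P_i^{n_i}$; equivalently $\omega_K/\omega(\P_i^{n_i})$ is precisely the order of the image of $\O_K^\times$ that you propose to compute. The paper then simply tabulates $\phi(\P_i^{n_i})$ (split/ramified versus inert) and $\omega(\P_i^{n_i})$ for each $K$, and reads off when the product is $\leq 2$. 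So there is no real difference in method, only in packaging.

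One point does need correction. Your assertion that ``only split or ramified primes appear'' is false, and your own later discussion of inert $2$ in $\mathbb{Q}(\sqrt{-3})$ contradicts it. The bound $\phi(\P_i) = p^2 - 1 \leq 2\omega_K$ for inert $p$ is \emph{not} vacuous when $\omega_K > 2$: with $\omega_K = 4$ it admits $p = 3$, and with $\omega_K = 6$ it admits $p = 2, 3$. Concretely, in $\mathbb{Q}(\sqrt{-1})$ the prime $3$ is inert and belongs on the list with $n_i = 1$ (here $\phi = 8$, the image of $i$ has order $4$, so $|\mathbf{G}_i| = 2$); in $\mathbb{Q}(\sqrt{-3})$ the inert prime $2$ belongs with $n_i = 1,2$, while inert $3$ does not occur only because $3$ happens to be ramified there. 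Once you include the inert candidates in your enumeration and compute the image of $\O_K^\times$ (equivalently $\omega(\P_i^{n_i})$) in each, the argument goes through exactly as the paper's does.
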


\begin{proof}
First, we note that 
\begin{equation}\label{order of G}
|\mathbf{G}_i|=\phi(\P_i^{n_i})\frac{\omega(\P_i^{n_i})}{\omega_K}.
\end{equation}
Let $p_i$ be a prime number such that $\P_i$ is lying over $p_i$.
It then follows from (\ref{Euler function}) that
\begin{equation}\label{order of phi}
\phi(\P_i^{n_i})=\left\{
\begin{array}{ll}
p_i^{2n_i}-p_i^{2n_i-2} & \textrm{if $p_i$ is inert in $K$}\vspace{0.2cm}\\
p_i^{n_i}-p_i^{n_i-1} & \textrm{otherwise}.
\end{array}
\right.
\end{equation}
If $K\neq \mathbb{Q}(\sqrt{-1}), \mathbb{Q}(\sqrt{-3})$, then $\o_K=2$ and
\begin{equation}\label{w1}
\omega(\P_i^{n_i})=\left\{
\begin{array}{ll}
2 & \textrm{if $\P_i^{n_i}\,|\,2\O_K$}\vspace{0.1cm}\\
1 & \textrm{otherwise}.
\end{array}
\right.
\end{equation}
If $K=\mathbb{Q}(\sqrt{-1})$, then $\o_K=4$ and 
\begin{equation}\label{discriminant}
\left(\frac{d_K}{p_i}\right)=\left(\frac{-4}{p_i}\right)=\left\{
\begin{array}{ll}
1 & \textrm{if $p_i\equiv 1\pmod{4}$}\vspace{0.1cm}\\
-1 & \textrm{if $p_i\equiv 3\pmod{4}$}\vspace{0.1cm}\\
0  & \textrm{if $p_i=2$}
\end{array}
\right.
\end{equation}
where $(\frac{d_K}{p_i})$ stands for the Kronecker symbol.
Since $2\O_K=(1-\sqrt{-1})^2\O_K$, we deduce
\begin{equation}\label{w2}
\omega(\P_i^{n_i})=\left\{
\begin{array}{ll}
4 & \textrm{if $\P_i^{n_i}=(1-\sqrt{-1})\O_K$}\vspace{0.1cm}\\
2 & \textrm{if $\P_i^{n_i}\neq(1-\sqrt{-1})\O_K$ and $\P_i^{n_i}\,|\,2\O_K$} \vspace{0.1cm}\\
1 & \textrm{otherwise}.
\end{array}
\right.
\end{equation}
If $K=\mathbb{Q}(\sqrt{-3})$, then $\o_K=6$ and 
\begin{equation}\label{discriminant2}
\left(\frac{d_K}{p_i}\right)=\left(\frac{-3}{p_i}\right)=\left\{
\begin{array}{ll}
1 & \textrm{if $p_i\equiv 1, 7\pmod{12}$}\vspace{0.1cm}\\
-1 & \textrm{if $p_i\equiv 5, 11\pmod{12}$ or $p_i=2$} \vspace{0.1cm}\\
0  & \textrm{if $p_i=3$}.
\end{array}
\right.
\end{equation}
Here we observe that 2 is inert in $K$ and 3 is ramified in $K$, so to speak, 
$3\O_K=\left(\frac{3+\sqrt{-3}}{2}\right)^2\O_K$.
One can then readily show that
\begin{equation}\label{w3}
\omega(\P_i^{n_i})=\left\{
\begin{array}{ll}
3 & \textrm{if $\P_i^{n_i}=\left(\displaystyle\frac{3+\sqrt{-3}}{2}\right)\O_K$}\vspace{0.1cm}\\
2 & \textrm{if $\P_i^{n_i}=2\O_K$} \vspace{0.1cm}\\
1 & \textrm{otherwise}.
\end{array}
\right.
\end{equation}
Therefore, the lemma follows from (\ref{order of G})$\sim$(\ref{w3}).
\end{proof}

\begin{remark}
If 2 is not inert in $K$, $\P_i$ is lying over 2 and $n_i=1$, then $K_\F=K_{\F\P_i^{-n_i}}$.
Indeed, we see from Proposition \ref{order of ray class field} that
\begin{equation}\label{ray class formula}
[K_{\F\P_i^{-n_i}} :K]= \frac{\o(\F\P_i^{-n_i})}{\phi(\P_i^{n_i})\o(\F)}\cdot[K_\F :K].
\end{equation}
Since $\phi(\P_i^{n_i})=1$ and $\o(\F\P_i^{-n_i})=\o(\F)$ in this case, we obtain the conclusion.
\end{remark}

From now on we assume that $K_\F\neq K_{\F\P_i^{-n_i}}$ for every $i$ and let $\mathbf{N}_\F$ be the number of $i$ such that $|G_i|=1$ or $2$.
After reordering prime ideal factors of $\F$ if necessary, we may suppose that $|G_{i}|=1$ or $2$ for $i=1,2,\ldots, \mathbf{N}_\F$.
For any intermediate field $F$ of the extension $K_\mathfrak{f}/K$ we mean
by $\mathrm{Cl}(K_\mathfrak{f}/F)$
the subgroup of $\mathrm{Cl}(\mathfrak{f})$ corresponding to $\mathrm{Gal}(K_\mathfrak{f}/F)$ via the Artin map $\sigma$ stated in (\ref{artin map}).

\begin{theorem}\label{main theorem}
Let $\F$ be a nontrivial proper integral ideal of $K$ with prime ideal factorization $\F=\prod_{i=1}^r \mathfrak{p}_i^{n_i}$
such that $|\mathbf{G}_i|=1$ or $2$ for $i=1,2,\ldots, \mathbf{N}_\F$.
Assume that $K_\F\neq K_{\F\P_i^{-n_i}}$ for every $i$ and 
\begin{equation}\label{bound-assumption}
\sum_{i=1}^{\mathbf{N}_\F}\frac{1}{\phi(\P_i^{n_i})}\leq \frac{1}{2}.
\end{equation}
Then for any class $C\in\mathrm{Cl}(\F)$ and any nonzero integer $n$, we get
\begin{equation*}
K_\F=K\big(g_\F(C)^n\big).
\end{equation*}
\end{theorem}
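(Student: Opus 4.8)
### Proof Plan

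The strategy is to reduce the statement to the nonvanishing of a Stickelberger element and then invoke the second Kronecker limit formula. Fix a class $C\in\mathrm{Cl}(\F)$ and a nonzero integer $n$, and set $F=K\big(g_\F(C)^n\big)$; clearly $K\subseteq F\subseteq K_\F$, so it suffices to show $\mathrm{Cl}(K_\F/F)$ is trivial, i.e. that every $C'\in\mathrm{Cl}(\F)$ fixing $g_\F(C)^n$ is the unit class $C_0$. By Proposition \ref{Galois action}(ii) the action of $\sigma(C')$ sends $g_\F(C)^n$ to $g_\F(CC')^n$, so the condition is $g_\F(CC')^n = g_\F(C)^n$, which forces $|g_\F(CC')| = |g_\F(C)|$. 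So I would argue by contradiction: suppose some $C'\neq C_0$ lies in $\mathrm{Cl}(K_\F/F)$; then $|g_\F(CC'^k)| = |g_\F(C)|$ for all $k\in\mathbb{Z}$ (since $C'$ generates a cyclic subgroup all of whose elements fix $F$), and more generally $|g_\F(CD)| = |g_\F(C)|$ for every $D$ in the subgroup $\mathrm{Cl}(K_\F/F)$, which I will call $\mathbf{H}$.

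Now I would bring in characters. The plan is: choose a nontrivial character $\chi$ of $\mathrm{Cl}(\F)$ that is trivial on $\mathbf{H}$ (possible since $\mathbf{H}\neq\{C_0\}$) and compute the Stickelberger element $S_\F(\overline{\chi},g_\F) = \sum_{D\in\mathrm{Cl}(\F)}\overline{\chi}(D)\log|g_\F(D)|$. Writing $\mathrm{Cl}(\F)$ as a disjoint union of cosets of $\mathbf{H}$ and using that $\log|g_\F(\cdot)|$ is constant on each coset $C\mathbf{H}$ up to... no — the subtlety is that $|g_\F|$ is constant only on $C\mathbf{H}$ with the fixed reference class $C$, not on arbitrary cosets. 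The cleaner route is: since $\chi$ is trivial on $\mathbf{H}$, it descends to a character $\widehat{\chi}$ of the quotient $\mathrm{Cl}(\F)/\mathbf{H}$, and by Proposition \ref{Galois action}(iii) the quotients $g_\F(D)/g_\F(D')$ for $D,D'$ in the same $\mathbf{H}$-coset are units; I would need $|g_\F|$ literally constant on cosets, which is exactly what the hypothesis $C'\in\mathbf{H}$ fixing $g_\F(C)^n$ gives only for the coset $C\mathbf{H}$. The fix is standard: the set of $C'$ fixing $g_\F(C)^n$ for \emph{all} $C$ simultaneously is a subgroup, and a Galois-theoretic argument (conjugating) shows the decomposition/inertia behaviour is uniform — concretely, if $C'$ fixes $F=K(g_\F(C)^n)$ then applying $\sigma(D)$ to the identity $g_\F(CC')^n=g_\F(C)^n$ gives $g_\F(CDC')^n=g_\F(CD)^n$ for every $D$, hence $|g_\F|$ is $\mathbf{H}$-invariant on the entire coset $C\mathbf{H}$, and running $D$ over $\mathrm{Cl}(\F)$ we get $|g_\F(DC')|=|g_\F(D)|$ for all $D$. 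So $\log|g_\F(\cdot)|$ is genuinely $\mathbf{H}$-invariant, and therefore $S_\F(\overline{\chi},g_\F)=|\mathbf{H}|\sum_{\widehat{D}\in\mathrm{Cl}(\F)/\mathbf{H}}\overline{\widehat\chi}(\widehat D)\log|g_\F(\widetilde D)|$, which is fine — but that is not yet zero. To make it vanish I instead want $\chi$ \emph{nontrivial} on $\mathbf{H}$: then grouping the sum over cosets of $\mathbf{H}$, the inner sum $\sum_{E\in\mathbf{H}}\overline{\chi}(CE)$ over a coset where $\log|g_\F|$ is constant equals $\log|g_\F(C)|\cdot\overline\chi(C)\sum_{E\in\mathbf H}\overline\chi(E)=0$. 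Hence $S_\F(\overline\chi,g_\F)=0$ for \emph{every} character $\chi$ that is nontrivial on $\mathbf{H}$.

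The final step uses the arithmetic input. Pick any $C^*\in\mathbf H\setminus\{C_0\}$; I want a character $\chi$ with $\chi(C^*)\neq 1$ (so $\chi$ is nontrivial on $\mathbf H$) and with $\F_\chi$ divisible by every prime $\P_i$ — this is precisely Proposition \ref{existence of character}, \emph{provided} $|\mathbf G_i|>2$ for all $i$. Here the hypotheses enter: for the indices $i=1,\dots,\mathbf N_\F$ we may have $|\mathbf G_i|\le 2$, and the bound \eqref{bound-assumption} together with $[K_\F:K_{\F\P_i^{-n_i}}]>1$ is what lets me first \emph{remove} those bad primes. Concretely, I expect the argument to run: by the index formula \eqref{ray class formula} and the assumption $K_\F\neq K_{\F\P_i^{-n_i}}$, the ray class fields $K_{\F\P_i^{-n_i}}$ ($i\le\mathbf N_\F$) are proper subfields, and the bound $\sum_{i\le\mathbf N_\F}1/\phi(\P_i^{n_i})\le 1/2$ guarantees (via inclusion–exclusion on degrees, Proposition \ref{order of ray class field}) that the compositum of all $K_{\F\P_i^{-n_i}}$ is still a proper subfield of $K_\F$ — so $F$, if it equalled this compositum or lay inside it, would not be all of $K_\F$; turning it around, any $C'\in\mathbf H$ that is "supported on the bad primes" must be trivial, and for the genuinely new part of the extension the relevant local factors satisfy $|\mathbf G_i|>2$. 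Then Proposition \ref{existence of character} supplies $\chi$ with $\chi(C^*)\neq1$ and $\P_i\mid\F_\chi$ for all $i$, so $\F_\chi$ shares all prime factors of $\F$; by Remark \ref{Stickremark} the Euler product in Proposition \ref{L-function relation} is $1$ and $L_{\F_\chi}(1,\chi_0)\neq0$, $T_\gamma(\overline{\chi_0})\neq0$, forcing $S_\F(\overline\chi,g_\F)\neq0$ — contradicting the vanishing derived above. Hence $\mathbf H=\{C_0\}$ and $K_\F=K(g_\F(C)^n)$.

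The main obstacle I anticipate is the bookkeeping in this last step: making precise how the bound \eqref{bound-assumption} forces the compositum $\prod_{i\le\mathbf N_\F}K_{\F\P_i^{-n_i}}$ to be proper in $K_\F$, and then correctly isolating a class $C^*\in\mathbf H$ lying "above" a prime $\P_i$ with $|\mathbf G_i|>2$ (so that Proposition \ref{existence of character} is applicable for that $C^*$). This is a degree/order estimate: one shows $[K_\F : \prod_{i\le\mathbf N_\F}K_{\F\P_i^{-n_i}}] \ge \big(\sum_{i\le\mathbf N_\F}\tfrac{1}{\phi(\P_i^{n_i})}\big)^{-1}\cdot(\text{something})$... more carefully, each $K_{\F\P_i^{-n_i}}$ has index $\ge\phi(\P_i^{n_i})$ in $K_\F$ (by \eqref{ray class formula}, since $\o(\F\P_i^{-n_i})/\o(\F)\le $ the relevant bound and $K_\F\neq K_{\F\P_i^{-n_i}}$), so the corresponding subgroup $\mathrm{Cl}(K_\F/K_{\F\P_i^{-n_i}})$ of $\mathrm{Cl}(\F)$ has order $\ge\phi(\P_i^{n_i})$, hence the union of their complements covers all but at most $\sum 1/\phi(\P_i^{n_i})\le 1/2$ of $\mathrm{Cl}(\F)$, leaving a nontrivial class outside — and that class corresponds to a prime with $|\mathbf G_i|>2$, where Proposition \ref{existence of character} applies. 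Verifying these inequalities and the compatibility with the kernel computations of Proposition \ref{ray class conjugate} in the two exceptional fields $\mathbb{Q}(\sqrt{-1}),\mathbb{Q}(\sqrt{-3})$ is where the real care is needed; the rest is the clean character-sum-plus-Kronecker-limit-formula skeleton above.
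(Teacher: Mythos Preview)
Your overall skeleton matches the paper's: set $F=K(g_\F(C)^n)$, assume $\mathbf{H}=\mathrm{Cl}(K_\F/F)$ is nontrivial, observe via Proposition~\ref{Galois action}(ii) (as you eventually do, after the conjugation-by-$\sigma(D)$ detour) that $g_\F(\cdot)^n$ and hence $\log|g_\F(\cdot)|$ is constant on $\mathbf{H}$-cosets, deduce $S_\F(\overline\chi,g_\F)=0$ for every character $\chi$ nontrivial on $\mathbf{H}$, and then seek such a $\chi$ with every $\P_i\mid\F_\chi$ so that Proposition~\ref{L-function relation} and Remark~\ref{Stickremark} force $S_\F(\overline\chi,g_\F)\neq0$. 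The paper works with $C_0$ and passes to general $C$ at the end via $g_\F(C)^n=(g_\F(C_0)^n)^{\sigma(C)}$, but your direct treatment of arbitrary $C$ is fine.

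The gap is in the character-existence step, which you yourself flag as the obstacle. Your proposed fix is not right: you count classes rather than characters, your inequality $|\mathrm{Cl}(K_\F/K_{\F\P_i^{-n_i}})|\ge\phi(\P_i^{n_i})$ is backwards (by~\eqref{ray class formula} one has $[K_\F:K_{\F\P_i^{-n_i}}]=\phi(\P_i^{n_i})\,\omega(\F)/\omega(\F\P_i^{-n_i})\le\phi(\P_i^{n_i})$), and the phrase ``a class corresponding to a prime with $|\mathbf G_i|>2$'' has no content. The paper instead counts \emph{characters}. The number $M_1$ of characters of $\mathrm{Cl}(\F)$ nontrivial on $\mathbf{H}$ is $[K_\F:K]-[F:K]\ge\tfrac12[K_\F:K]$. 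The number $M_2$ of nontrivial characters whose conductor misses some \emph{bad} prime $\P_i$ ($i\le\mathbf N_\F$) is at most $\sum_{i\le\mathbf N_\F}[K_{\F\P_i^{-n_i}}:K]-1$, and hypothesis~\eqref{bound-assumption} combined with~\eqref{ray class formula} (plus an inclusion--exclusion refinement when $\mathbf N_\F=2$) yields $M_2\le\tfrac12[K_\F:K]-1<M_1$. Hence some $\chi$ is nontrivial on $\mathbf{H}$ and already has $\P_i\mid\F_\chi$ for every bad $i$. The remaining \emph{good} primes (those with $|\mathbf G_i|>2$) are then adjoined to $\F_\chi$ one at a time using the construction in the proof of Proposition~\ref{existence of character}: one multiplies $\chi$ by an auxiliary $\psi''$ supported at $\P_i$ while preserving nontriviality on $\mathbf{H}$. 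In short, the pigeonhole on characters disposes of the bad primes and the Proposition~\ref{existence of character} mechanism disposes of the good ones --- the reverse of the division of labour your sketch proposes.
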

\begin{proof}
Let $F=K(g_\F(C_0)^n)$.
On the contrary suppose that $F$ is properly contained in $K_\F$, that is, $\mathrm{Cl}(K_\F/F)\neq\{1\}$.
Then we claim that there exists a character $\chi$ of $\mathrm{Cl}(\F)$ satisfying $\chi|_{\mathrm{Cl}(K_\F/F)}\neq 1$ and $\mathfrak{p}_i\, |\, \mathfrak{f}_\chi$ for $i=1,2,\ldots, \mathbf{N}_\F$.
Indeed, we deduce that
\begin{equation}\label{character1}
\begin{array}{lll}
M_1&=&\left|\left\{\textrm{characters $\chi$ of $\mathrm{Cl}(\F)~|~ \chi|_{\mathrm{Cl}(K_\F/F)}\neq 1$} \right\}\right|\vspace{0.2cm}\\
&=&\left|\left\{\textrm{characters $\chi$ of $\mathrm{Cl}(\F)$} \right\}\right|
-\left|\left\{\textrm{characters $\chi$ of $\mathrm{Cl}(\F)~|~ \chi|_{\mathrm{Cl}(K_\F/F)}=1$} \right\}\right|\vspace{0.2cm}\\
&=&[K_\F : K]-[F:K]\vspace{0.2cm}\\
&=&[K_\F : K]\left(1-\displaystyle\frac{1}{[K_\F:F]}\right)\vspace{0.2cm}\\
&\geq& \displaystyle\frac{1}{2}[K_\F : K].
\end{array}
\end{equation}
Thus if $\mathbf{N}_\F=0$,  the claim is clear.
Observe that a trivial character is not contained in the set stated in (\ref{character1}).
Now assume $\mathbf{N}_\F\geq 1$ and let 
\begin{equation*}
M_2=
\Big|\left\{\textrm{characters $\chi\neq 1$ of $\mathrm{Cl}(\F)~|~ \P_{i}\nmid\F_{\chi}$ for some $i\in\{1,2,\ldots,\mathbf{N}_\F\}$} \right\}\Big|.
\end{equation*}
First, we suppose $\mathbf{N}_\F\neq 2$.
Then we derive that
\begin{equation*}
\begin{array}{lll}
M_2&=& \left|\left\{\textrm{characters $\chi$ of $\mathrm{Cl}(\F)~|~ \P_{i}\nmid\F_{\chi}$ for some $i\in\{1,2,\ldots,\mathbf{N}_\F\}$} \right\}\right|-1 \vspace{0.2cm} \\
&=&\left|\left\{\textrm{characters $\chi$ of $\mathrm{Cl}(\F)~|~ \F_{\chi}\,|\,\F\P_{i}^{-n_{i}}$ for some $i\in\{1,2,\ldots,\mathbf{N}_\F\}$  } \right\}\right|-1\vspace{0.2cm}\\
&\leq&\displaystyle\sum_{i=1}^{\mathbf{N}_\F}\left|\left\{\textrm{characters $\chi$ of $\mathrm{Cl}(\F\P_{i}^{-n_{i}})$} \right\}\right|-1 \vspace{0.2cm}\\
&=&\displaystyle\sum_{i=1}^{\mathbf{N}_\F}[K_{\F\P_{i}^{-n_{i}}}:K]-1 \vspace{0.2cm}\\
&=&\displaystyle\left(\sum_{i=1}^{\mathbf{N}_\F}\frac{\o(\F\P_i^{-n_i})}{\phi(\P_i^{n_i})\o(\F)}\right)\left[K_{\F}:K\right]-1 \quad\textrm{(by (\ref{ray class formula}))}.
\end{array}
\end{equation*}
If $\mathbf{N}_\F=1$, then $M_2\leq ({1}/{2})[K_\F:K]-1$ since $K_\F\neq K_{\F\P_i^{-n_i}}$ for every $i$.
And, if $\mathbf{N}_\F\geq 3$, then $\o(\F)=\o(\F\P_i^{-n_i})=1$ for every $i$ because $\F$ has at least three prime ideal factors.
Hence we attain $M_2\leq ({1}/{2})[K_\F:K]-1$ again by the assumption (\ref{bound-assumption}).
Now assume $\mathbf{N}_\F=2$.
Since $\F$ has at least two prime ideal factors, we achieve that
\begin{equation*}
\begin{array}{lll}
M_2&=&\left|\left\{\textrm{characters $\chi$ of $\mathrm{Cl}(\F)~|~ \F_{\chi}\,|\,\F\P_{i}^{-n_{i}}$ for some $i\in\{1,2\}$  } \right\}\right|-1\vspace{0.2cm}\\
&=&\displaystyle\sum_{i=1}^{2}\left|\left\{\textrm{characters $\chi$ of $\mathrm{Cl}(\F\P_{i}^{-n_{i}})$} \right\}\right|
-\left|\left\{\textrm{characters $\chi$ of $\mathrm{Cl}(\F\P_{1}^{-n_{1}}\P_{2}^{-n_{2}})$} \right\}\right|-1\vspace{0.2cm}\\
&=&\displaystyle\sum_{i=1}^{2}[K_{\F\P_{i}^{-n_{i}}}:K]-[K_{\F\P_{1}^{-n_{1}}\P_{2}^{-n_{2}} }:K]-1\vspace{0.2cm}\\
&=&\displaystyle\left(\sum_{i=1}^{2}\frac{\o(\F\P_i^{-n_i})}{\phi(\P_i^{n_i})}-\frac{\o(\F\P_1^{-n_1}\P_2^{-n_2})}{ \phi(\P_1^{n_1})\phi(\P_2^{n_2})} \right)\left[K_{\F}:K\right]-1.\vspace{0.2cm}
\end{array}
\end{equation*}
Here we note that $\o(\F\P_i^{-n_i})\neq 1$ occurs only when $\F=\P_1^{n_1}\P_2^{n_2}$, and $\o(\F\P_1^{-n_1}\P_2^{-n_2})=\o_K$ in this case.
Using this fact one can check that if the assumption (\ref{bound-assumption}) holds then
\begin{equation}\label{inequality}
\left(\sum_{i=1}^{2}\frac{\o(\F\P_i^{-n_i})}{\phi(\P_i^{n_i})}-\frac{\o(\F\P_1^{-n_1}\P_2^{-n_2})}{ \phi(\P_1^{n_1})\phi(\P_2^{n_2})} \right)\leq\frac{1}{2},
\end{equation}
which yields $M_2\leq ({1}/{2})[K_\F:K]-1$.
Thus for any $\mathbf{N}_\F\geq 1$ we have $M_1>M_2$ and so the claim is proved.
Furthermore, the proof of Proposition \ref{existence of character} shows that there is a character $\psi''$ of $\mathrm{Cl}(\F)$ for which $\chi\psi''|_{\mathrm{Cl}(K_\F/F)}\neq 1$, $\mathfrak{f}_{\chi}\, |\, \mathfrak{f}_{\chi\psi''}$ and $\mathfrak{p}_i \,|\, \mathfrak{f}_{\chi\psi''}$ for all $i$.
So, by replacing $\chi$ by $\chi\psi''$, we get a character $\chi$ of $\mathrm{Cl}(\F)$ satisfying $\chi|_{\mathrm{Cl}(K_\F/F)}\neq 1$ and $\mathfrak{p}_i\, |\, \mathfrak{f}_\chi$ for all $i$.
\par
Since $\chi$ is nontrivial and $\F_\chi\neq\O_K$, we have $S_\F(\overline{\chi},g_\F)\neq 0$ by Proposition \ref{L-function relation}.
On the other hand, we induce that
\begin{eqnarray*}
S_\F(\overline{\chi},g_\F)&=&\frac{1}{n}\sum_{C\in\mathrm{Cl}(\F)}\overline{\chi}(C)\log|g_\F(C)^n|\\
&=&\frac{1}{n}\sum_{C\in\mathrm{Cl}(\F)}\overline{\chi}(C)\log\left|(g_\F(C_0)^n)^{\sigma(C)}\right|\quad\textrm{(by Proposition \ref{Galois action})}\\
&=&\frac{1}{n}\sum_{[C_1]\in\mathrm{Cl}(\F)/\mathrm{Cl}(K_\F/F)}\left(\sum_{C_2\in\mathrm{Cl}(K_\F/F)}\overline{\chi}(C_1C_2)\log\left|(g_\F(C_0)^n)^{\sigma(C_1C_2)}\right|\right)\\
&=&\frac{1}{n}\sum_{[C_1]\in\mathrm{Cl}(\F)/\mathrm{Cl}(K_\F/F)}\overline{\chi}(C_1)\log\left|(g_\F(C_0)^n)^{\sigma(C_1)}\right|\left(\sum_{C_2\in\mathrm{Cl}(K_\F/F)}\overline{\chi}(C_2)\right)\\
&=&0,
\end{eqnarray*}
because $g_\F(C_0)^n\in F$ and $\chi|_{\mathrm{Cl}(K_\F/F)}\neq 1$.
It gives a contradiction.
Therefore, $F=K_\F$ and so $K_\F=K\left(g_\F(C)^n\right)$ for any $C\in\mathrm{Cl}(\F)$ since $g_\F(C)^n=(g_\F(C_0)^n)^{\sigma(C)}$ by Proposition \ref{Galois action}.
\end{proof}

\begin{remark}
\textup{(i)} When $\mathbf{N}_\F=0$ or $1$, the assumption (\ref{bound-assumption}) is always satisfied.
\par
\textup{(ii)}
If $\mathbf{N}_\F=2$ and $\F=\P_1^{n_1}\P_2^{n_2}$, then one can show that the inequality (\ref{inequality}) holds except the case where $K\neq\mathbb{Q}(\sqrt{-1}),\mathbb{Q}(\sqrt{-3})$, 2 is ramified in $K$ and $\F=\P_{(2)}^2\P_{(5)}$.
Here, $\P_{(p)}$ stands for a prime ideal of $K$ lying over a prime number $p$.
Therefore we are able to establish Theorem \ref{main theorem} again under the above condition.
\end{remark}

\section{Ray class fields by smaller generators}
In this section we shall construct ray class invariants over imaginary quadratic fields whose minimal polynomials have relatively small coefficients.
\par

Let $K=\mathbb{Q}(\sqrt{-d})$ be an imaginary quadratic field with a square-free integer $d>0$,
$\F$ be a nontrivial proper integral ideal of $K$ and $\theta$ be as in (\ref{theta}). 
In what follows we adopt the notations of Section \ref{Generation of ray class fields by Siegel-Ramachandra invariants}.

\begin{lemma}\label{property of special value}
Let $\mathbf{r}, \mathbf{s}\in({1}/{N})\mathbb{Z}^2\setminus\mathbb{Z}^2$ for a positive integer $N\geq 2$. 
Then we achieve that
\begin{equation*}
\frac{g_{\mathbf{r}}(\theta)^{\gcd(2,N)\cdot N}}{g_{\mathbf{s}}(\theta)^{\gcd(2,N)\cdot N}}
\end{equation*}
lies in the ray class field $K_{(N)}=K_\F$ with $\F=N\O_K$.
\end{lemma}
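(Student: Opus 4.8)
The plan is to produce an explicit element of the modular function field $\mathcal{F}_N$ which, specialized at $\theta$, differs from the given quotient only by a root of unity that we can place inside $K_{(N)}$. Set $k=\gcd(2,N)\cdot N$; we may assume $\mathbf{r}\neq\mathbf{s}$ as vectors, since otherwise the quotient is $1\in K$. I would apply the modularity criterion of Proposition \ref{quad relation} to the family with $m(\mathbf{r})=k$, $m(\mathbf{s})=-k$, and all other exponents zero. The three congruences hold for purely divisibility reasons: every summand of $\sum m(\mathbf{t})(Nt_i)^2$ and of $\sum m(\mathbf{t})(Nt_1)(Nt_2)$ is an integer multiple of $k$, which is itself a multiple of $\gcd(2,N)\cdot N$ and of $N$; and $\sum m(\mathbf{t})\gcd(12,N)=(k-k)\gcd(12,N)=0$. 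Hence $h(\tau):=\zeta\,g_{\mathbf{r}}(\tau)^{k}/g_{\mathbf{s}}(\tau)^{k}$ belongs to $\mathcal{F}_N$, where $\zeta=\exp\bigl(\pi i k[r_2(1-r_1)-s_2(1-s_1)]\bigr)\in\mathbb{Q}(\zeta_{2N^2})$.

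The key point is to show $\zeta\in K_{(N)}$. Writing $r_i=p_i/N$, $s_i=q_i/N$ with $p_i,q_i\in\mathbb{Z}$, a short computation gives $\zeta=\exp\bigl((\pi i\gcd(2,N)/N)\,M\bigr)$ with $M=p_2(N-p_1)-q_2(N-q_1)\in\mathbb{Z}$; thus $\zeta=\zeta_{2N}^{M}$ when $N$ is odd and $\zeta=\zeta_N^{M}$ when $N$ is even, and since $\mathbb{Q}(\zeta_{2N})=\mathbb{Q}(\zeta_N)$ for odd $N$ we obtain $\zeta\in\mathbb{Q}(\zeta_N)$ in all cases. Finally, $\mathbb{Q}(\zeta_N)\subseteq K_{(N)}$: the constant function $\zeta_N$ is $\mathrm{SL}_2(\mathbb{Z})$-invariant with its single Fourier coefficient equal to $\zeta_N\in\mathbb{Q}(\zeta_N)$, hence lies in $\mathcal{F}_N$; it is trivially finite at $\theta$, so it lies in $\mathcal{F}_{N,\theta}$, and Proposition \ref{complex multiplication} gives $\zeta_N=\zeta_N(\theta)\in K_{(N)}$. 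Therefore $\zeta\in K_{(N)}$.

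To conclude, recall that Siegel functions have neither zeros nor poles on $\mathbb{H}$, so $h$ is defined, finite and nonzero at $\theta$; thus $h\in\mathcal{F}_{N,\theta}$ and Proposition \ref{complex multiplication} yields $h(\theta)\in K_{(N)}=K_\F$ with $\F=N\O_K$. Consequently
\[
\frac{g_{\mathbf{r}}(\theta)^{\gcd(2,N)\cdot N}}{g_{\mathbf{s}}(\theta)^{\gcd(2,N)\cdot N}}=\zeta^{-1}h(\theta)\in K_\F .
\]
I expect the only non-routine ingredient to be the location of the auxiliary root of unity $\zeta$: Proposition \ref{quad relation} only certifies $\zeta\in\mathbb{Q}(\zeta_{2N^2})$, whereas one needs the explicit exponent computation above to see that in fact $\zeta\in\mathbb{Q}(\zeta_N)\subseteq K_{(N)}$; the rest is a routine verification of the congruences together with the complex-multiplication description of $K_{(N)}$ from Proposition \ref{complex multiplication}.
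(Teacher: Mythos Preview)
Your proof is correct and follows exactly the route the paper intends: the paper's own proof is the single sentence ``It is immediate from Proposition~\ref{complex multiplication} and~\ref{quad relation},'' and you have supplied the details behind that sentence. The one point you work out explicitly---that the auxiliary root of unity $\zeta$ from Proposition~\ref{quad relation} actually lies in $\mathbb{Q}(\zeta_N)\subset K_{(N)}$ (so that the quotient itself, not just $\zeta$ times it, lands in $K_{(N)}$)---is handled in the paper only implicitly; the authors invoke the same fact later (e.g.\ in the proof of Corollary~\ref{main corollary}, writing ``Since $\zeta_N\in\mathcal{F}_N$\ldots'').
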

\begin{proof}
It is immediate from Proposition \ref{complex multiplication} and \ref{quad relation}.
\end{proof}

\begin{lemma}\label{existence of ray class}
Let $\F=N\O_K=\prod_{i=1}^r \mathfrak{p}_i^{n_i}$ for an integer $N\geq 2$ and $p$ be an odd prime dividing $N$ \textup{(}if any\textup{)}.
Assume that $K_\F\neq K_{\F\P_i^{-n_i}}$  for every $i$.
Then there is an element $\beta\in\O_K$ prime to $6N$ for which $N_{K/\mathbb{Q}}(\beta)\equiv 1\pmod{\o_{K_\F}}$ and
the ray class $[(\beta)]\in\mathrm{Cl}(\F)$ is of order $k_p$, where
\begin{equation*}
k_p=\left\{
\begin{array}{ll} 
\displaystyle\frac{1}{\o_K}\left(p-\left(\frac{d_K}{p}\right)\right)& \textrm{if $p\nmid d_K$, $\mathrm{ord}_p(N)=1$ and $N=p$}\vspace{0.3cm}\\
\displaystyle\frac{1}{2}\left(p-\left(\frac{d_K}{p}\right)\right) & \textrm{if $p\nmid d_K$, $\mathrm{ord}_p(N)=1$ and $N\neq p$}\vspace{0.3cm}\\
p & \textrm{otherwise}.
\end{array}\right.
\end{equation*}
Here $(\frac{d_K}{p})$ is the Kronecker symbol.

\end{lemma}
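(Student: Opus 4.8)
The plan is to construct $\beta$ by the Chinese Remainder Theorem, with all the content concentrated at the $p$-primary part $\mathfrak{F}_p$ of $\F=N\O_K$. Recall from the discussion preceding Theorem~\ref{main theorem} that for $\beta\in\O_K$ prime to $\F$ the order of $[(\beta)]$ in $\mathrm{Cl}(\F)$ equals the order of the image of $\beta$ in $\mathbf{G}=(\O_K/\F)^\times/\{\alpha+\F\,:\,\alpha\in\O_K^\times\}$, which by the Chinese Remainder Theorem is the product of the $\mathbf{G}_i$. I would prescribe $\beta\equiv1$ modulo every prime-power divisor of $\F$ not dividing $\mathfrak{F}_p$, $\beta\equiv1\pmod{\ell^{\mathrm{ord}_\ell(\o_{K_\F})}\O_K}$ for each prime $\ell\neq p$ dividing $6\,\o_{K_\F}$, and a residue modulo $\mathfrak{F}_p$ chosen below. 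The first family of congruences places $[(\beta)]$ in the $p$-part of $\mathbf{G}$ and makes all local contributions to $N_{K/\Q}(\beta)$ away from $p$ trivial; the second makes $\beta$ prime to $6N$ and, since $\beta\equiv1\pmod{\ell^e\O_K}$ forces $N_{K/\Q}(\beta)=\beta\overline\beta\equiv1\pmod{\ell^e}$, gives $N_{K/\Q}(\beta)\equiv1$ modulo the prime-to-$p$ part of $\o_{K_\F}$. Everything is thereby reduced to producing $x\in(\O_K/\mathfrak{F}_p)^\times$ whose image in $\mathbf{G}$ has order exactly $k_p$ and which lies in the kernel of the norm map $N\colon(\O_K/\mathfrak{F}_p)^\times\to(\mathbb{Z}/p^{\mathrm{ord}_p(N)}\mathbb{Z})^\times$, the latter forcing $N_{K/\Q}(\beta)\equiv1\pmod{p^{\mathrm{ord}_p(N)}}$.

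The local step follows the trichotomy in the definition of $k_p$, using that the residue field at a prime $\P\mid p$ is $\mathbb{F}_p$ when $p$ splits or ramifies and $\mathbb{F}_{p^2}$ when $p$ is inert. If $p\nmid d_K$, $\mathrm{ord}_p(N)=1$ and $p$ splits (so $\mathfrak{F}_p=p\O_K=\P\overline{\P}$ and $(\O_K/\mathfrak{F}_p)^\times\cong\mathbb{F}_p^\times\times\mathbb{F}_p^\times$ with $N$ the product), I would take $x=(g^a,g^{-a})$ with $g$ a primitive root modulo $p$ and $a=1$ if $N=p$, $a=2$ if $N\neq p$; the reciprocal coordinates put $x$ in $\ker N$, and computing how $\O_K^\times$ maps into this component gives order $(p-1)/\o_K$ resp.\ $\tfrac12(p-1)$ in $\mathbf{G}$. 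If $p$ is inert ($\mathfrak{F}_p=p\O_K$, $(\O_K/\mathfrak{F}_p)^\times\cong\mathbb{F}_{p^2}^\times$), I would take $x$ in the norm-one subgroup $\ker(N_{\mathbb{F}_{p^2}/\mathbb{F}_p})$ — cyclic of order $p+1$ and containing the image of $\O_K^\times$ — chosen so that its image in $\mathbf{G}$ has order $(p+1)/\o_K$ resp.\ $\tfrac12(p+1)$. In the remaining case ($p\mid d_K$ or $\mathrm{ord}_p(N)\geq2$) the kernel of $N$ inside $(\O_K/\mathfrak{F}_p)^\times$ has order a multiple of $p$, so I would take $x$ to be an element of order $p$ in it; since $p$ is odd, outside $K=\Q(\sqrt{-3})$, $p=3$ the image of $\O_K^\times$ meets the $\mathfrak{F}_p$-component in a group of order prime to $p$, so $x$ has order $p=k_p$ in $\mathbf{G}$.

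Assembling: by the Chinese Remainder Theorem choose $\beta\in\O_K$ with the prescribed residue modulo $\mathfrak{F}_p$ and $\beta\equiv1$ at the other prescribed moduli; then $\beta$ is prime to $6N$, $N_{K/\Q}(\beta)\equiv1\pmod{\o_{K_\F}}$, and the order of $[(\beta)]$ in $\mathrm{Cl}(\F)$ is the order of $\beta$ in $\mathbf{G}$, which equals $k_p$ by the componentwise analysis.

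The step I expect to require the most care is the exact-order computation in $\mathbf{G}$: one must check the image of $\beta$ has order neither larger nor smaller than $k_p$, which is delicate precisely when $\O_K^\times$ is larger than $\{\pm1\}$ ($K=\Q(\sqrt{-1}),\Q(\sqrt{-3})$) or when small powers of primes above $2$ occur in $\F$, and in some such cases $\beta$ must carry a nontrivial component at an auxiliary prime of $\F$ in addition to $\mathfrak{F}_p$. This is exactly where the hypothesis $K_\F\neq K_{\F\P_i^{-n_i}}$ enters, since via (\ref{ray class formula}) it rules out the degenerate configurations — a non-inert prime above $2$ to the first power, and $N=3$ for $K=\Q(\sqrt{-3})$ — in which the image of $\O_K^\times$ would otherwise collapse the expected order; the case $K=\Q(\sqrt{-3})$, $p=3$ remains genuinely exceptional, because $\O_K^\times$ then has $3$-torsion, and there one instead takes $\beta$ to reduce modulo $3\O_K$ to a generator of the cyclic group $(\O_K/3\O_K)^\times$. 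Finally, the $p$-part of the norm congruence relies on the standard fact that $\mu_N\subset K_\F$ together with a conductor computation showing no higher $p$-power roots of unity lie in $K_\F$, so that $\mathrm{ord}_p(\o_{K_\F})=\mathrm{ord}_p(N)$ and the congruence $N_{K/\Q}(\beta)\equiv1\pmod{p^{\mathrm{ord}_p(N)}}$ already obtained suffices.
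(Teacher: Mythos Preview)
Your strategy—CRT with the content concentrated at the $p$-part, choosing $\beta$ in the kernel of the local norm map—is exactly the paper's, and your treatment of the ``otherwise'' branch is correct (though the paper writes $\beta$ down explicitly as $1+\tfrac{2N}{p}\sqrt{-d}$ or $1+\tfrac{6N}{p}\sqrt{-d}$, which is reused in Corollary~\ref{main corollary}). But there is a concrete slip in the first branch with $N\neq p$: the choice $a=2$ can drop the order below $k_p$. Take $K=\Q(\sqrt{-6})$, $N=10$, $p=5$ (split, $g=2$): then $(g^2,g^{-2})=(4,4)=-1$ in $(\O_K/5\O_K)^\times$, and since $-1\equiv1\pmod{2\O_K}$ at the only other component of $\F$, your $\beta\equiv-1\pmod{10\O_K}$ gives $[(\beta)]=C_0$ instead of a class of order $k_p=2$. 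One checks directly that the hypothesis $K_\F\neq K_{\F\P_i^{-n_i}}$ holds here, so this case is not excluded. The source of the error is the claim that $\mathbf{G}\cong\prod_i\mathbf{G}_i$: CRT gives $(\O_K/\F)^\times\cong\prod_i(\O_K/\P_i^{n_i})^\times$, but the image of $\O_K^\times$ is a single diagonal copy, so the quotient does not split as a product of the local quotients $\mathbf{G}_i$.

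The paper avoids this by taking $\omega'$ to generate the \emph{full} norm kernel (your $a=1$), noting that the order of $[(\omega')]$ in $\mathrm{Cl}(\F)$ is $m$ or $m/2$ according as $-1+N\O_K\notin\langle\omega'+N\O_K\rangle$ or $\in$, and replacing $\omega'$ by $(\omega')^2$ only in the former case. Your proposed repair—carrying a nontrivial component at an auxiliary prime—is not what is needed; the fix is in the choice at $p$. (A related issue affects your exceptional case $K=\Q(\sqrt{-3})$, $p=3$: a generator of $(\O_K/3\O_K)^\times$ already lies in the image of $\O_K^\times$, so reducing to that residue gives $[(\beta)]=C_0$.)
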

\begin{proof}
Let 
\begin{equation*}
N'=\left\{
\begin{array}{ll}
4N & \textrm{if $3\,|\,N$}\vspace{0.2cm}\\
12N & \textrm{if $3\nmid N$}
\end{array}\right.
\end{equation*}
with prime decomposition $N'=\prod_\ell \ell^{n_\ell}$.
Then $n_p=\mathrm{ord}_p(N')=\mathrm{ord}_p(N)$ and  $\o_{K_\F}$ divides $N'$ (\cite[Chapter 9, Lemma 4.3]{Kubert}).
Hence it suffices to find $\beta\in\O_K$ for which $N_{K/\mathbb{Q}}(\beta)\equiv 1\pmod{N'}$ and 
the order of the ray class $[(\beta)]\in\mathrm{Cl}(\F)$ is $k_p$.
For simplicity, we let $m=p-\big(\frac{d_K}{p}\big)$ and define a homomorphism
\begin{equation*}
\begin{array}{rccc}
\widetilde{N_{K/\mathbb{Q},n}}:&(\O_K/n\O_K)^\times&\longrightarrow& (\mathbb{Z}/n\mathbb{Z})^\times\\
&\o+n\O_K&\longmapsto&N_{K/\mathbb{Q}}(\o)+n\mathbb{Z}
\end{array}
\end{equation*}
for each integer $n\geq 2$.
\par

\begin{itemize}
\item[\textbf{Case 1}.]  First, suppose that $p\nmid d_K$ and $\mathrm{ord}_p(N)=1$.
Note that the map 
\begin{equation*}
\widetilde{N_{K/\mathbb{Q},p}}:(\O_K/p\O_K)^\times\longrightarrow (\mathbb{Z}/p\mathbb{Z})^\times
\end{equation*}
is surjective and $\ker(\widetilde{N_{K/\mathbb{Q},p}})\cong \mathbb{Z}/m\mathbb{Z}$ (\cite[Corollary 5.2]{Koo}).
We choose $\o\in\O_K$ so that $\ker(\widetilde{N_{K/\mathbb{Q},p}})=\langle \o+p\O_K \rangle$.
Then there is $\o'\in\O_K$ for which for each prime $\ell$ dividing $N'$
\begin{equation*}
\o'\equiv \left\{
\begin{array}{ll}
\o\pmod{\ell^{n_\ell}\O_K}&\textrm{if $\ell=p$}\\
1\pmod{\ell^{n_\ell}\O_K}&\textrm{if $\ell\neq p$}
\end{array}\right.
\end{equation*}
by the Chinese remainder theorem.
We observe that $\o'$ is prime to $6N$, $\o'+N'\O_K$ is contained in $\ker(\widetilde{N_{K/\mathbb{Q},N'}})$ and it is of order $m$ in $(\O_K/N\O_K)^\times$.
If $N=p$, then the ray class $[(\o')]\in\mathrm{Cl}(\F)$ is of order $m/\o_K$ because 
\begin{equation*}
\{\varepsilon+p\O_K\in (\O_K/p\O_K)^\times~|~\varepsilon\in\O_K^\times\}\subset \ker(\widetilde{N_{K/\mathbb{Q},p}})=\langle \o'+p\O_K \rangle.
\end{equation*}
When $N\neq p$, we derive that
\begin{equation*}
\textrm{\big(the order of $[(\o')]$ in $\mathrm{Cl}(\F)$\big)}=\left\{
\begin{array}{ll}
{m}/{2} & \textrm{if $-1+N\O_K\in \langle \o'+N\O_K \rangle$}\vspace{0.2cm}\\
m & \textrm{otherwise}.
\end{array}\right. 
\end{equation*}
Therefore we can find $\beta\in\O_K$ with the desired properties.

\item[\textbf{Case 2}.] Now, assume that $\mathrm{ord}_p(N)>1$ or $p\,|\,d_K$.
Let
\begin{equation}\label{beta}
\beta=\left\{
\begin{array}{ll}
\displaystyle 1+\frac{2N}{p}\sqrt{-d}\phantom{\bigg(} &\textrm{if $p=3$}\vspace{0.3cm}\\
\displaystyle1+\frac{6N}{p}\sqrt{-d}\phantom{\bigg(} &\textrm{if $p\neq 3$}.
\end{array}\right.
\end{equation}
Then $N_{K/\mathbb{Q}}(\beta)\equiv 1\pmod{N'}$ and $\beta+N\O_K$ is of order $p$ in $(\O_K/N\O_K)^\times$ because  $p^2$ divides $Nd$.
And we claim that
\begin{equation*}
\{\varepsilon+N\O_K\in (\O_K/N\O_K)^\times~|~\varepsilon\in\O_K^\times\}\cap \langle\beta+N\O_K\rangle=\{1+N\O_K\}
\end{equation*}
since $p$ is an odd prime and $K_\F\neq K_{\F\P_i^{-n_i}}$  for every $i$.
Thus the ray class $[(\beta)]\in\mathrm{Cl}(\F)$ is of order $p$ as desired.
\end{itemize}
\end{proof}

\begin{remark}
If $p$ is an odd prime such that $p-\big(\frac{d_K}{p}\big)$ is a power of $2$, then $p$ is either a Mersenne prime or a Fermat prime.
Observe that 48 Mersenne primes and 5 Fermat primes are known as of May, 2014.
\end{remark}

\begin{theorem}\label{main theorem2}
Let $\F=\prod_{i=1}^r \mathfrak{p}_i^{n_i}$ be a nontrivial proper integral ideal of $K$ and $C'$ be an element of 
$\mathbf{G}~(\subset\mathrm{Cl}(\F))$ whose order is an odd prime $p$.
Assume that $K_\F\neq K_{\F\P_i^{-n_i}}$ and $|\mathbf{G}_i|>2$ for every $i$.
If $p>3$ or $|\mathbf{G}_i|>3$ for every $i$, then for any nonzero integer $n$ the unit
\begin{equation*}
\frac{g_\F(C')^n}{g_\F(C_0)^n}
\end{equation*}
generates $K_\F$ over $K$.
\par
Moreover, if $\F=N\O_K$ for an integer $N\geq 2$ and $C'=[(\beta')]$ for some $\beta'\in\O_K$ prime to $6N$ with $N_{K/\mathbb{Q}}(\beta')\equiv 1\pmod{\o_{K_\F}}$, then
\begin{equation*}
K_\F=K\left(\frac{g_{\left[\begin{smallmatrix}s/N\\t/N\end{smallmatrix}\right]}(\theta)^{m}}{g_{\left[\begin{smallmatrix}0\\ 1/N\end{smallmatrix}\right]}(\theta)^{m}}\right)
\end{equation*}
where $\beta'=s\theta+t$ with $s,t\in\mathbb{Z}$ and
\begin{equation*}
m=\left\{
\begin{array}{ll}
\gcd(N,3)&\textrm{if $N$ is odd},\vspace{0.2cm}\\
4\cdot\gcd(\frac{N}{2},3)&\textrm{if $N$ is even}.
\end{array}\right.
\end{equation*}
\end{theorem}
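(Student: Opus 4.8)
For the first assertion, put $v=g_\F(C')^n/g_\F(C_0)^n$. By Proposition \ref{Galois action}(iii) (and since powers of units are units) $v$ is a unit in $K_\F$, and by Proposition \ref{Galois action}(ii) its conjugates are $v^{\sigma(C)}=g_\F(C'C)^n/g_\F(C)^n$ for $C\in\mathrm{Cl}(\F)$. Let $F=K(v)$ and $H=\mathrm{Cl}(K_\F/F)\subset\mathrm{Cl}(\F)$; the goal is $H=\{1\}$. Assume not, so $[K_\F:F]\geq2$. The heart of the matter is to produce a character $\chi$ of $\mathrm{Cl}(\F)$ with $\chi|_H\neq1$, $\chi(C')\neq1$ and $\mathfrak{p}_i\mid\F_\chi$ for every $i$. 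Granting such a $\chi$, I would argue exactly as in the proof of Theorem \ref{main theorem}: partitioning $\mathrm{Cl}(\F)$ into cosets of $H$ and using $v\in F$ together with $\chi|_H\neq1$ forces $\sum_{C\in\mathrm{Cl}(\F)}\overline{\chi}(C)\log|v^{\sigma(C)}|=0$, while the substitution $C\mapsto(C')^{-1}C$ in one half of the telescoped sum gives $\sum_{C\in\mathrm{Cl}(\F)}\overline{\chi}(C)\log|v^{\sigma(C)}|=n\big(\chi(C')-1\big)S_\F(\overline{\chi},g_\F)$. As $n\neq0$ and $\chi(C')\neq1$, this yields $S_\F(\overline{\chi},g_\F)=0$, contradicting Remark \ref{Stickremark}, since $\chi$ is nontrivial and $\mathfrak{p}_i\mid\F_\chi$ for all $i$ makes $\F_\chi\neq\O_K$ and renders the Euler factor in Proposition \ref{L-function relation} equal to $1$. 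Hence $F=K_\F$, which is the first assertion.

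Constructing that character is the step I expect to be the genuine obstacle, and it is where all the hypotheses are spent. I would count characters as in the proof of Theorem \ref{main theorem}, carrying the extra constraint $\chi(C')\neq1$. Since $C'$ has prime order $p$, the characters killing $C'$ form a subgroup of index $p$ in the character group, so a comparison with the index-$[K_\F:F]$ subgroup cut out by $\chi|_H=1$ shows the number of $\chi$ with $\chi|_H\neq1$ and $\chi(C')\neq1$ is at least $\big(1-\tfrac1p\big)\big([K_\F:K]-[F:K]\big)\geq\big(1-\tfrac1p\big)\tfrac12[K_\F:K]$, which is positive because $p\geq3$. To upgrade such a $\chi$ to one with $\mathfrak{p}_i\mid\F_\chi$ for every $i$, I would run the modification from the proof of Proposition \ref{existence of character} one prime at a time: at each $\mathfrak{p}_i$ with $\mathfrak{p}_i\nmid\F_\chi$ one twists $\chi$ by a character of $\mathbf{G}_i$ chosen to make $\mathfrak{p}_i$ divide the conductor while destroying neither $\chi|_H\neq1$ nor $\chi(C')\neq1$. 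The hypothesis $|\mathbf{G}_i|>2$, strengthened to $|\mathbf{G}_i|>3$ for all $i$ in the case $p=3$ (this is precisely ``$p>3$ or $|\mathbf{G}_i|>3$ for every $i$''), supplies enough characters of $\mathbf{G}_i$ to dodge the finitely many forbidden values, and $K_\F\neq K_{\F\mathfrak{p}_i^{-n_i}}$ keeps the number of excluded characters small enough for the counting estimate to close. The bookkeeping in this step is the only genuinely delicate point.

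For the second assertion, write $\F=N\O_K$, $\beta'=s\theta+t$, and set $w=g_{\left[\begin{smallmatrix}s/N\\ t/N\end{smallmatrix}\right]}(\theta)/g_{\left[\begin{smallmatrix}0\\ 1/N\end{smallmatrix}\right]}(\theta)$, so that the element in the statement is $w^m$. Taking $\mathfrak{c}=\O_K$ for $C_0$ and $\mathfrak{c}=(\beta')$ for $C'$ and using the basis $\O_K=[\theta,1]$, one has $\F=[N\theta,N]$ with $1=0\cdot N\theta+\tfrac1N\cdot N$, and $\F(\beta')^{-1}=[(N/\beta')\theta,\ N/\beta']$ with $1=\tfrac sN(N/\beta')\theta+\tfrac tN(N/\beta')$; hence $g_\F(C_0)=g_{\left[\begin{smallmatrix}0\\ 1/N\end{smallmatrix}\right]}(\theta)^{12N}$ and $g_\F(C')=g_{\left[\begin{smallmatrix}s/N\\ t/N\end{smallmatrix}\right]}(\theta)^{12N}$, whence $g_\F(C')/g_\F(C_0)=w^{12N}$. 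By the first assertion with $n=1$, $K(w^{12N})=K_\F$, so it suffices to prove $w^m\in K_\F$: since $m\mid 12N$ this gives $K_\F=K(w^{12N})\subseteq K(w^m)\subseteq K_\F$, forcing $K(w^m)=K_\F$. For $w^m\in K_\F$ I would combine two inputs. First, Lemma \ref{property of special value} gives $w^{\gcd(2,N)\cdot N}\in K_\F$. Second, because $(\beta')$ is prime to $6N$ and $N_{K/\mathbb{Q}}(\beta')\equiv1\pmod{\o_{K_\F}}$, Lemma \ref{Nth root} applies to $g_\F(C')/g_\F(C_0)=(w^{12})^N$ and shows that its $N$-th root $w^{12}$ lies in $K_\F$. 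A short check in each parity of $N$ gives $\gcd\big(12,\ \gcd(2,N)\cdot N\big)=m$, so by Bézout $w^m=(w^{12})^a(w^{\gcd(2,N)\cdot N})^b$ for suitable $a,b\in\mathbb{Z}$, and as $w^{12}$ and $w^{\gcd(2,N)\cdot N}$ are nonzero elements of $K_\F$ this lies in $K_\F$; this finishes the argument.
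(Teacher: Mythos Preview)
Your overall architecture matches the paper's: assume $F\subsetneq K_\F$, produce a character $\chi$ of $\mathrm{Cl}(\F)$ with $\chi|_H\neq1$, $\chi(C')\neq1$ and $\mathfrak p_i\mid\F_\chi$ for all $i$, then compute $(\chi(C')-1)\,S_\F(\overline{\chi},g_\F)=0$ via the coset decomposition and contradict Remark~\ref{Stickremark}. Your Stickelberger manipulation is correct, and your treatment of the second assertion---the direct identification $g_\F(C')/g_\F(C_0)=w^{12N}$, the use of Lemma~\ref{Nth root} to place $w^{12}$ in $K_\F$, and the observation $\gcd\bigl(12,\gcd(2,N)\cdot N\bigr)=m$---is clean and agrees with the paper.

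The one place where your sketch underestimates the work is the twisting step. Proposition~\ref{existence of character} preserves a single inequality $\chi(C)\neq1$ through each twist; here two constraints must survive simultaneously. The paper's device is to replace the condition $\chi|_H\neq1$ by the more tractable $\chi(C'')\neq1$ for a \emph{fixed} $C''\in H$ of prime order $\ell$, and then at each $\mathfrak p_i\nmid\F_\chi$ split into cases according to whether $C''\in\mathbf G$ and, when it is, according to the structure of $\langle[\beta''+\mathfrak p_i^{n_i}]\rangle$ inside $\mathbf G_i$. In the worst subcase one has $\mathbf G_i=\langle[\beta''+\mathfrak p_i^{n_i}]\rangle$ cyclic of prime order with $[\beta'+\mathfrak p_i^{n_i}]$ also nontrivial there, forcing $p=\ell=|\mathbf G_i|$; avoiding the two forbidden values $\chi(C')^{-1}$ and $\chi(C'')^{-1}$ among the $p-1$ nontrivial characters of $\mathbf G_i$ then requires $p-1>2$, which is exactly where ``$p>3$ or $|\mathbf G_i|>3$ for every $i$'' is consumed. (Your attribution of a role to $K_\F\neq K_{\F\mathfrak p_i^{-n_i}}$ in this step is not quite right; the paper's twisting argument does not invoke it directly.) So your outline is correct, but the ``bookkeeping'' you flag is the substantive part of the proof and calls for the case analysis the paper supplies.
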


\begin{proof}
Let $F=K\big({g_\F(C')^n}/{g_\F(C_0)^n}\big)$.
Suppose $F\subsetneq K_\F$, namely,  $\mathrm{Cl}(K_\F/F)\neq\{1\}$.
We then claim that there exists a character $\chi$ of $\mathrm{Cl}(\F)$ such that $\chi|_{\mathrm{Cl}(K_\F/F)}\neq 1$, $\chi(C')\neq 1$ and $\mathfrak{p}_i\, |\, \mathfrak{f}_\chi$ for every $i$.
Indeed, one can achieve
\begin{equation*}
|\{ \textrm{characters $\chi\neq 1$ of $\mathrm{Cl}(\F)~|~ \chi(C')=1$}\}|=\frac{1}{p}[K_\F:K]-1.
\end{equation*}
And, it follows from (\ref{character1}) that
\begin{equation*}
|\left\{\textrm{characters $\chi$ of $\mathrm{Cl}(\F)~|~ \chi|_{\mathrm{Cl}(K_\F/F)}\neq 1$} \right\}|\geq \frac{1}{2}[K_\F:K]
>\frac{1}{p}[K_\F:K]-1.
\end{equation*}
Thus there is a character $\chi$ of $\mathrm{Cl}(\F)$ satisfying $\chi|_{\mathrm{Cl}(K_\F/F)}\neq 1$ and $\chi(C')\neq 1$.
Since $C'\in \mathbf{G}$, we may write $C'=[(\beta')]$ for some $\beta'\in\O_K$ which is prime to $\F$.
Choose $C''\in\mathrm{Cl}(K_\F/F)$ of prime order $\ell$ satisfying $\chi(C'')\neq 1$.
Here, we assume that $\F_\chi$ is not divided by $\P_i$ for some $i$.
We then consider the following two cases:
\begin{itemize}
\item[\textbf{Case 1}.]  First, consider the case $C''\not\in \mathbf{G}$.
The proof of Proposition \ref{existence of character} shows that there exists a nontrivial character $\psi$ of $\mathbf{G}_i$ such that $\psi([\beta'+{\mathfrak{p}_i^{n_i}}])\neq \chi(C')^{-1}$.
And, we can extend $\psi$ to a character $\psi'$ of $\mathbf{G}$ whose conductor is divisible only by $\mathfrak{p}_i$.
Since $C''\not\in \mathbf{G}$ and $\psi'((C'')^\ell)=1$, one can also extend $\psi'$ to a character $\psi''$ of $\mathrm{Cl}(\F)$ so as to have $\psi''(C'')=1$ by Lemma \ref{extension}.
Note that the character $\chi\psi''$ of $\mathrm{Cl}(\F)$ satisfies $\chi\psi''(C')\neq 1$, $\chi\psi''(C'')=\chi(C'')\neq 1$, $\P_i\,|\,\F_{\chi\psi''}$ and $\F_\chi\,|\,\F_{\chi\psi''}$.
And, we replace $\chi$ by $\chi\psi''$.

\item[\textbf{Case 2}.]  Next, suppose $C''\in \mathbf{G}$.
Let $\beta''\in\O_K$ such that $C''=[(\beta'')]$ in $\mathrm{Cl}(\F)$.
\begin{itemize}
\item[(i)] If $\mathbf{G}_i\neq\langle [\beta''+{\mathfrak{p}_i^{n_i}}]\rangle$, we can choose a trivial character $\psi_1$ of the proper subgroup $\langle [\beta''+{\mathfrak{p}_i^{n_i}}]\rangle$ of $\mathbf{G}_i$.
Since $\langle[\beta''+{\mathfrak{p}_i^{n_i}}]\rangle\cong\{1\}$ or $\mathbb{Z}/\ell\mathbb{Z}$ in the group $\mathbf{G}_i$, we have either $\langle[\beta'+{\mathfrak{p}_i^{n_i}}]\rangle=\langle[\beta''+{\mathfrak{p}_i^{n_i}}]\rangle\subsetneq \mathbf{G}_i$ or $\langle[\beta'+{\mathfrak{p}_i^{n_i}}]\rangle\cap\langle[\beta''+{\mathfrak{p}_i^{n_i}}]\rangle=\{1\}$.
And by using Lemma \ref{extension} we can extend $\psi_1$ to a nontrivial character $\psi$ of $\mathbf{G}_i$ such that 
$\psi(\beta'+{\mathfrak{p}_i^{n_i}})\neq\chi(C')^{-1}$ due to the fact $p\geq 3$.
\item[(ii)] Now assume that $\mathbf{G}_i=\langle [\beta''+{\mathfrak{p}_i^{n_i}}]\rangle\cong\mathbb{Z}/\ell\mathbb{Z}$.
Then $\ell>2$ because $|\mathbf{G}_i|>2$. 
If $\beta'+{\mathfrak{p}_i^{n_i}}\in\{\alpha+{\mathfrak{p}_i^{n_i}}\in (\O_K/\mathfrak{p}_i^{n_i})^\times~|~\alpha\in \O_K^\times \}$, there is a nontrivial character $\psi$ of $\mathbf{G}_i$ for which
$\psi([\beta''+{\P_i^{n_i}}])\neq \chi(C'')^{-1}$.
Oberve that $\psi([\beta'+{\P_i^{n_i}}])=1$.
\par
On the other hand, if $\beta'+{\mathfrak{p}_i^{n_i}}\not\in\{\alpha+{\mathfrak{p}_i^{n_i}}\in (\O_K/\mathfrak{p}_i^{n_i})^\times~|~\alpha\in \O_K^\times \}$, then $\langle [\beta'+{\mathfrak{p}_i^{n_i}}]\rangle=\langle [\beta''+{\mathfrak{p}_i^{n_i}}]\rangle$ and $p=\ell$.
And we see that $p,\ell>3$ by hypothesis.
Let $\psi_1$ be a character of $\mathbf{G}_i$ such that $\psi_1([\beta'+{\P_i^{n_i}}])=\zeta_p$.
Then $\psi, \psi^2,\ldots,\psi^{p-1}$ are distinct nontrivial characters of $\mathbf{G}_i$ and we derive
\begin{equation*}
\left|\{\psi_1^j~|~\psi_1^j([\beta'+{\P_i^{n_i}}])\neq \chi(C')^{-1} \}_{1\leq j\leq p-1}\right|=p-2.
\end{equation*}
Meanwhile, $\psi_1([\beta''+{\P_i^{n_i}}])=\zeta_p^s$ for some $1\leq s\leq p-1$.
If 
\begin{equation*}
\psi_1^{j_1}([\beta''+{\P_i^{n_i}}])=\psi_1^{j_2}([\beta''+{\P_i^{n_i}}])
\end{equation*}
for some $1\leq j_1,j_2\leq p-1$, then $\zeta_p^{s(j_1-j_2)}=1$ and so $j_1=j_2$.
Hence we have
\begin{equation*}
\left|\{\psi_1^j~|~\psi_1^j([\beta'+{\P_i^{n_i}}])\neq \chi(C')^{-1}, ~\psi_1^j([\beta''+{\P_i^{n_i}}])\neq \chi(C'')^{-1} \}_{1\leq j\leq p-1}\right|\geq p-3>0.
\end{equation*}
We choose a character $\psi$ of $\mathbf{G}_i$ in the above set.
\end{itemize}
Therefore, in any cases we can extend $\psi$ to a character $\psi''$ of $\mathrm{Cl}(\F)$ in a similar fashion,
 and it satisfies $\chi\psi''(C')\neq 1$, $\chi\psi''(C'')\neq 1$, $\P_i\,|\,\F_{\chi\psi''}$ and $\F_\chi\,|\,\F_{\chi\psi''}$.
Now, we replace $\chi$ by $\chi\psi''$.
\end{itemize}
By continuing this process for every $i$, we get the claim.
\par
Since $\chi$ is nontrivial and $\P_i\,|\,\F_\chi$ for every $i$, we have
$S_\F(\overline{\chi},g_\F)\neq 0$
by Proposition \ref{L-function relation} and Remark \ref{Stickremark}.
On the other hand, we deduce that
\begin{eqnarray*}
(\chi(C')-1)S_\F(\overline{\chi},g_\F)
&=&(\overline{\chi}(C'^{-1})-1)\sum_{C\in\mathrm{Cl}(\F)}\overline{\chi}(C)\log|g_\F(C)|\\
&=&\frac{1}{n}\sum_{C\in\mathrm{Cl}(\F)}\overline{\chi}(C)\log\left|\frac{g_\F(CC')^n}{g_\F(C)^n}\right|\\
&=&\frac{1}{n}\sum_{C\in\mathrm{Cl}(\F)}\overline{\chi}(C)\log\left|\left(\frac{g_\F(C')^n}{g_\F(C_0)^n}\right)^{\sigma(C)}\right| \quad\textrm{(by Proposition \ref{Galois action})}\\
&=&\frac{1}{n}\sum_{[C_1]\in\mathrm{Cl}(\F)/\mathrm{Cl}(K_\F/F)}
\left(\sum_{C_2\in\mathrm{Cl}(K_\F/F)}\overline{\chi}(C_1C_2)\log\left|\left(\frac{g_\F(C')^n}{g_\F(C_0)^n}\right)^{\sigma(C_1C_2)}\right|\right)\\
&=&\frac{1}{n}\sum_{[C_1]\in\mathrm{Cl}(\F)/\mathrm{Cl}(K_\F/F)}\overline{\chi}(C_1)\log\left|\left(\frac{g_\F(C')^n}{g_\F(C_0)^n}\right)^{\sigma(C_1)}\right|\left(\sum_{C_2\in\mathrm{Cl}(K_\F/F)}\overline{\chi}(C_2)\right)\\
&=&0,
\end{eqnarray*}
because $\overline{\chi}$ is nontrivial on $\mathrm{Cl}(K_\F/F)$.
And, it yields a contradiction since $\chi(C')-1\neq 0$.
Therefore, we conclude $F=K_\F$.
\par
If $\F=N\O_K$ for a positive integer $N\geq 2$ and $\beta'=s\theta+t$ is prime to $6N$ with $N_{K/\mathbb{Q}}(\beta')\equiv 1\pmod{\o_{K_\F}}$, then any $N$-th root of 
the value ${g_\F(C')}/{g_\F(C_0)}$ generates $K_\F$ over $K$ by Lemma \ref{Nth root}.
Write $\textrm{min}(\theta,\mathbb{Q})=X^2+B_\theta X+C_\theta$.
Then we have by definition $g_\F(C_0)=g_{\left[\begin{smallmatrix}0\\ 1/N\end{smallmatrix}\right]}(\theta)^{12N}$  and
\begin{equation*}
g_\F(C')=g_\F(C_0)^{\sigma(C')}=g_{
\scriptsize\left[\begin{matrix}   
t-B_\theta s&s\\
-C_\theta s&t
\end{matrix}\right]\left[\begin{matrix}0\\ 1/N\end{matrix}\right]}(\theta)^{12N}=g_{\left[\begin{smallmatrix}s/N\\ t/N\end{smallmatrix}\right]}(\theta)^{12N}
\end{equation*}
by Proposition \ref{principal class action}, \ref{Siegel property} and \ref{Galois action}.
Therefore, 
\begin{equation*}
K_\F=K\left(\frac{g_{\left[\begin{smallmatrix}s/N\\t/N\end{smallmatrix}\right]}(\theta)^{12}}{g_{\left[\begin{smallmatrix}0\\ 1/N\end{smallmatrix}\right]}(\theta)^{12}}\right),
\end{equation*}
and hence we get the conclusion by Lemma \ref{property of special value}.

\end{proof}
\begin{remark}
In like manner as in Lemma \ref{order condition} one can show that
$|\mathbf{G}_i|=3$ if and only if $\mathfrak{p}_i^{n_i}$ satisfies one of the following conditions:
\begin{itemize}
\item[] \textbf{Case 1}  : $K\neq \mathbb{Q}(\sqrt{-1}), \mathbb{Q}(\sqrt{-3})$ 
\begin{itemize}
\item[$\bullet$] $2$ is inert in $K$, $\P_i$ is lying over $2$ and $n_i=1$.
\item[$\bullet$] $3$ is not inert in $K$, $\P_i$ is lying over $3$ and $n_i=2$.
\item[$\bullet$] $7$ is not inert in $K$, $\P_i$ is lying over $7$ and $n_i=1$.
\end{itemize}
\item[] \textbf{Case 2}  : $K=\mathbb{Q}(\sqrt{-1})$ 
\begin{itemize}
\item[$\bullet$] $\P_i$ is lying over $13$ and $n_i=1$.
\end{itemize}
\item[] \textbf{Case 3} : $K=\mathbb{Q}(\sqrt{-3})$ 
\begin{itemize}
\item[$\bullet$] $\P_i$ is lying over $3$ and $n_i=3$.
\item[$\bullet$] $\P_i$ is lying over $19$ and $n_i=1$.
\end{itemize}
\end{itemize}

\end{remark}

\begin{remark}\label{additional case}
\begin{itemize}
\item[(i)] Suppose 
\begin{equation*}
\left(\frac{g_\F(C')}{g_\F(C_0)}\right)^{\sigma(C)}\neq \frac{g_\F(C')}{g_\F(C_0)}
\end{equation*}
for every $C\in \mathbf{G}\setminus\{1\}$.
Then we may  consider only the $\textbf{Case 1}$ in the proof of Theorem \ref{main theorem2}.
Thus one can prove that
Theorem \ref{main theorem2} is still valid for any prime $p$ with the assumptions $K_\F\neq K_{\F\P_i^{-n_i}}$ and $|\mathbf{G}_i|>2$ for every $i$. 
\item[(ii)]
Let $\F=N\O_K$ for a positive integer $N\geq 2$ and $\ell$ be an odd prime dividing $N$. 
For any odd prime $p$ dividing $k_\ell$, there exists $\beta'\in\O_K$ prime to $6N$ for which $N_{K/\mathbb{Q}}(\beta')\equiv 1\pmod{\o_{K_\F}}$ and the order of the ray class $[(\beta')]$ is $p$ by Lemma \ref{existence of ray class}.
\end{itemize}
\end{remark}

The following corollary would be an explicit example of Theorem \ref{main theorem2}.

\begin{corollary}\label{main corollary}
Let $\F=N\O_K$ for a positive integer $N\geq 2$ and $p$ be an odd prime dividing $N$ \textup{(}if any\textup{)}.
Further, we set $\beta\in\O_K$  as in (\ref{beta}) and 
$C'=[(\beta)]\in\mathrm{Cl}(\F)$.
Assume that $K_\F\neq K_{\F\P_i^{-n_i}}$, $|\mathbf{G}_i|>2$ for every $i$ and $p^2$ divides $Nd_K$.
\begin{itemize}
\item[(i)]
If $p=3$ and $|\mathbf{G}_i|>3$ for every $i$, then the special value
\begin{equation*}
\gamma=\left\{
\begin{array}{ll}
\displaystyle\frac{g_{\left[\begin{smallmatrix}2/3\\ 1/N\end{smallmatrix}\right]}(\theta)^3}{g_{\left[\begin{smallmatrix}0\\ 1/N\end{smallmatrix}\right]}(\theta)^3}&\textrm{if $d_K\equiv 0\pmod{4}$}\vspace{0.3cm}\\
\displaystyle\frac{g_{\left[\begin{smallmatrix}4/3\\ 2/3+1/N\end{smallmatrix}\right]}(\theta)^3}{g_{\left[\begin{smallmatrix}0\\ 1/N\end{smallmatrix}\right]}(\theta)^3}&\textrm{if $d_K\equiv 1\pmod{4}$}
\end{array}\right.
\end{equation*}
generates $K_\F ~(=K_{(N)})$ over $K$.
It is a unit in $K_\F$ and is a $4N$-th root of  ${g_\F(C')}/{g_\F(C_0)}$.
\item[(ii)]
If $p>3$, then the special value
\begin{equation*}
\gamma=\left\{
\begin{array}{ll}
\displaystyle\frac{g_{\left[\begin{smallmatrix}6/p\\ 1/N\end{smallmatrix}\right]}(\theta)}{g_{\left[\begin{smallmatrix}0\\ 1/N\end{smallmatrix}\right]}(\theta)}&\textrm{if $d_K\equiv 0\pmod{4}$}\vspace{0.3cm}\\
\displaystyle\frac{g_{\left[\begin{smallmatrix}12/p\\ 6/p+1/N\end{smallmatrix}\right]}(\theta)}{g_{\left[\begin{smallmatrix}0\\ 1/N\end{smallmatrix}\right]}(\theta)}&\textrm{if $d_K\equiv 1\pmod{4}$}
\end{array}\right.
\end{equation*}
generates $K_\F$ over $K$.
Further, it is a unit in $K_\F$ and is a $12N$-th root of ${g_\F(C')}/{g_\F(C_0)}$.
\end{itemize}
\end{corollary}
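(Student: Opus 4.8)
The plan is to read Corollary \ref{main corollary} as the explicit specialization of Theorem \ref{main theorem2} to the element $\beta$ of (\ref{beta}), keeping careful track of which small power of the Siegel quotient already lies in $K_\F$.

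First I would collect the input facts about $\beta$. Since $p$ is odd and $p^{2}\mid Nd_K$ — which, $d_K$ being a fundamental discriminant, is equivalent to ``$\mathrm{ord}_p(N)>1$ or $p\mid d_K$'' — we are exactly in \textbf{Case 2} of the proof of Lemma \ref{existence of ray class}: the $\beta$ of (\ref{beta}) is prime to $6N$, satisfies $N_{K/\mathbb{Q}}(\beta)\equiv 1\pmod{\o_{K_\F}}$, and $C'=[(\beta)]\in\mathbf{G}$ has order $p$ in $\mathrm{Cl}(\F)$. Under the standing hypotheses $K_\F\neq K_{\F\P_i^{-n_i}}$ and $|\mathbf{G}_i|>2$ for every $i$, together with ``$|\mathbf{G}_i|>3$ for every $i$'' in case (i) or ``$p>3$'' in case (ii), the hypotheses of Theorem \ref{main theorem2} hold, so $g_\F(C')/g_\F(C_0)$ is a unit of $K_\F$ generating $K_\F$ over $K$. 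Writing $\beta=s\theta+t$ through (\ref{theta}) — for $d_K\equiv 0\pmod{4}$ one has $\theta=\sqrt{-d}$, hence $(s,t)=(2N/3,1)$ in (i) and $(6N/p,1)$ in (ii); for $d_K\equiv 1\pmod{4}$ one has $\sqrt{-d}=2\theta+1$, hence $(s,t)=(4N/3,\,1+2N/3)$ in (i) and $(12N/p,\,1+6N/p)$ in (ii) — Propositions \ref{principal class action}, \ref{Siegel property} and \ref{Galois action} give, as in the proof of Theorem \ref{main theorem2}, $g_\F(C_0)=g_{[0,1/N]}(\theta)^{12N}$ and $g_\F(C')=g_{[s/N,t/N]}(\theta)^{12N}$, and in each of the four cases the index $[s/N,t/N]$ is precisely the one displayed in $\gamma$. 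Put $h=g_{[s/N,t/N]}(\theta)/g_{[0,1/N]}(\theta)$, so that $\gamma=h^{3}$ in (i), $\gamma=h$ in (ii), and $h^{12N}=g_\F(C')/g_\F(C_0)$ in both cases.

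The heart of the argument — and the step I expect to be the main obstacle — is to show that the small power $\gamma$, rather than only $h^{\gcd(2,N)N}$ as supplied by Lemma \ref{property of special value}, already lies in $K_\F$. For this I would feed $g_{[s/N,t/N]}(\tau)^{k}g_{[0,1/N]}(\tau)^{-k}$ into the modularity criterion Proposition \ref{quad relation} with $k=3$ in case (i) and $k=p$ in case (ii): the three quadratic congruences are verified by a direct computation from $(s,t)$ using the divisibilities $3\mid N$, resp.\ $p\mid N$, and the Kubert--Lang prefactor $\zeta\in\mathbb{Q}(\zeta_{2N^{2}})$ turns out — again by direct computation — to be a power of $\zeta_N$ (times a power of $\zeta_3$ when $3\mid N$ in case (i)), hence to lie in $\mathbb{Q}(\zeta_N)\subseteq K_\F$. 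Since Siegel functions are holomorphic and non-vanishing on $\mathbb{H}$, $h^{k}$ is finite at $\theta$, so Proposition \ref{complex multiplication} gives $h^{k}\in K_{(N)}=K_\F$. In case (i) this is already $\gamma=h^{3}\in K_\F$. In case (ii) I would then invoke Lemma \ref{Nth root}: as $\beta$ is prime to $6N$ with $N_{K/\mathbb{Q}}(\beta)\equiv 1\pmod{\o_{K_\F}}$, the $N$-th root $h^{12}$ of $g_\F(C')/g_\F(C_0)=h^{12N}$ lies in $K_\F$; and since $p>3$ is prime, $\gcd(p,12)=1$, so choosing integers $a,b$ with $ap+12b=1$ yields $h=(h^{p})^{a}(h^{12})^{b}\in K_\F$.

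It remains to assemble the conclusions, which is routine. In both parts $\gamma\in K_\F$ and $\gamma^{4N}=g_\F(C')/g_\F(C_0)$ in (i), resp.\ $\gamma^{12N}=g_\F(C')/g_\F(C_0)$ in (ii), so $K_\F=K(g_\F(C')/g_\F(C_0))\subseteq K(\gamma)\subseteq K_\F$, i.e.\ $\gamma$ generates $K_\F$ over $K$. Since $g_\F(C')/g_\F(C_0)$ is a unit of $K_\F$ and $\gamma\in K_\F$ is a root of the monic polynomial $X^{4N}-g_\F(C')/g_\F(C_0)$ over $\O_{K_\F}$ (resp.\ $X^{12N}-g_\F(C')/g_\F(C_0)$), both $\gamma$ and $\gamma^{-1}$ are algebraic integers, whence $\gamma\in\O_{K_\F}^{\times}$; and $\gamma$ is a $4N$-th, resp.\ $12N$-th, root of $g_\F(C')/g_\F(C_0)$ by construction. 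The genuine work sits in the third paragraph — the quadratic congruence bookkeeping and the identification of $\zeta$ — while the one conceptual input beyond Theorem \ref{main theorem2} is the coprimality $\gcd(p,12)=1$ in case (ii), which is exactly what lets the exponent drop all the way to $1$.
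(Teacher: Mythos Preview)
Your proposal is correct and follows essentially the same route as the paper: verify via Lemma \ref{existence of ray class} that $\beta$ satisfies the hypotheses of Theorem \ref{main theorem2}, write $\beta=s\theta+t$ explicitly, check the congruences of Proposition \ref{quad relation} so that the relevant power of the Siegel quotient lies in $\mathcal{F}_N$ (and hence its value at $\theta$ in $K_\F$), and in case (ii) combine $h^p\in K_\F$ with $h^{12}\in K_\F$ (the latter coming from Lemma \ref{Nth root}) via $\gcd(p,12)=1$. The only cosmetic difference is that the paper states in (ii) that $\gamma^p$ already \emph{generates} $K_\F$ and that $\gamma^{12}\in K_\F$, whereas you first show both powers lie in $K_\F$ and defer the generation statement to the end; the content is the same.
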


\begin{proof}
Note that $\beta$ is prime to $6N$, $N_{K/\mathbb{Q}}(\beta)\equiv 1\pmod{\o_{K_\F}}$ and the ray class $[(\beta)]\in\mathrm{Cl}(\F)$ is of order $p$ by Lemma \ref{existence of ray class}.
Hence if $p>3$ or $|\mathbf{G}_i|>3$ for every $i$, then we achieve by Theorem \ref{main theorem2}
\begin{equation*}
K_\F=K\left(\frac{g_\F(C')}{g_\F(C_0)}\right).
\end{equation*}
\begin{itemize}
\item[(i)]
If $p=3$, then we can write
\begin{equation*}
\beta=\left\{
\begin{array}{ll}
\displaystyle\frac{2N}{3}\theta+1\phantom{\bigg(} &\textrm{if $d_K\equiv 0\pmod{4}$}\vspace{0.2cm}\\
\displaystyle\frac{4N}{3}\theta+\frac{2N}{3}+1\phantom{\bigg(} &\textrm{if $d_K\equiv 1\pmod{4}$}.
\end{array}\right.
\end{equation*}
Here we observe that $g_\F(C_0)=g_{\left[\begin{smallmatrix}0\\ 1/N\end{smallmatrix}\right]}(\theta)^{12N}$ by definition and
\begin{equation*}
g_\F(C')=g_\F(C_0)^{\sigma(C')}=\left\{
\begin{array}{ll}
g_{\left[\begin{smallmatrix}2/3\\ 1/N\end{smallmatrix}\right]}(\theta)^{12N}&\textrm{if $d_K\equiv 0\pmod{4}$}\vspace{0.2cm}\\
g_{\left[\begin{smallmatrix}4/3\\2/3+1/N\end{smallmatrix}\right]}(\theta)^{12N} &\textrm{if $d_K\equiv 1\pmod{4}$}
\end{array}\right.
\end{equation*}
by Proposition \ref{principal class action}, \ref{Siegel property} and \ref{Galois action}.
Since $\zeta_N\in\mathcal{F}_N$, we see that the functions
\begin{equation*}
\frac{g_{\left[\begin{smallmatrix}2/3\\ 1/N\end{smallmatrix}\right]}(\tau)^3}{g_{\left[\begin{smallmatrix}0\\ 1/N\end{smallmatrix}\right]}(\tau)^3}\quad\textrm{and}\quad
\frac{g_{\left[\begin{smallmatrix}4/3\\ 2/3+1/N\end{smallmatrix}\right]}(\tau)^3}{g_{\left[\begin{smallmatrix}0\\ 1/N\end{smallmatrix}\right]}(\tau)^3}
\end{equation*}
belong to $\mathcal{F}_N$ by Proposition \ref{quad relation}.
Thus its special value at $\theta$ lies in $K_\F=K_{(N)}$ by Proposition \ref{complex multiplication},
and so it generates $K_\F$ over $K$.
\item[(ii)]
If $p>3$, then we may write
\begin{equation*}
\beta=\left\{
\begin{array}{ll}
\displaystyle\frac{6N}{p}\theta+1\phantom{\bigg(} &\textrm{if $d_K\equiv 0\pmod{4}$}\vspace{0.2cm}\\
\displaystyle\frac{12N}{p}\theta+\frac{6N}{p}+1\phantom{\bigg(} &\textrm{if $d_K\equiv 1\pmod{4}$}.
\end{array}\right.
\end{equation*}
And, in a similar way one can readily show that the special value $\gamma^p$
generates $K_\F$ over $K$.
On the other hand, $\gamma^{12}$ is an element of $K_\F$ by Theorem \ref{main theorem2}.
Therefore, we get
\begin{equation*}
K_\F=K(\gamma)
\end{equation*}
owing to the fact $\gcd(p,12)=1$.
\end{itemize}
\end{proof}

\begin{example}\label{first example}
Let $K=\mathbb{Q}(\sqrt{-7})$ and $\theta=({-1+\sqrt{-7}})/{2}$.  
Then the class number $h_K$ is 1.
\begin{itemize}

\item[(i)]
Let $\F=9\O_K$.
Then 3 is inert in $K$ and so $|\mathbf{G}_i|>3$ for every $i$.
Hence the special value
\begin{equation*}
\gamma=\frac{g_{\left[\begin{smallmatrix}4/3\\ 7/9\end{smallmatrix}\right]}(\theta)^3}{g_{\left[\begin{smallmatrix}0\\ 1/9\end{smallmatrix}\right]}(\theta)^3}
\end{equation*}
generates $K_{(9)}$ over $K$ by Corollary \ref{main corollary}.
\par
Here we note that $\mathrm{Gal}(K_{(9)}/K)\cong W_{9,\theta}/\{\pm I_2\}$ by Proposition \ref{shimura reciprocity law}, and hence we obtain
\begin{equation*}
\begin{array}{rl}
W_{9,\theta}/\{\pm I_2\}=&\bigg\{ 
\footnotesize\left[\begin{matrix}
1&0\\
0&1
\end{matrix}\right],
\left[
\begin{matrix}
2&0\\
0&2
\end{matrix}\right],
\left[
\begin{matrix}
4&0\\
0&4
\end{matrix}\right],
\left[
\begin{matrix}
8&7\\
1&0
\end{matrix}\right],
\left[
\begin{matrix}
0&7\\
1&1
\end{matrix}\right],
\left[
\begin{matrix}
1&7\\
1&2
\end{matrix}\right],
\left[
\begin{matrix}
2&7\\
1&3
\end{matrix}\right],
\left[
\begin{matrix}
3&7\\
1&4
\end{matrix}\right],\vspace{0.2cm}\\
&
\footnotesize\left[\begin{matrix}
4&7\\
1&5
\end{matrix}\right],
\left[
\begin{matrix}
5&7\\
1&6
\end{matrix}\right],
\left[
\begin{matrix}
6&7\\
1&7
\end{matrix}\right],
\left[
\begin{matrix}
7&7\\
1&8
\end{matrix}\right],
\left[
\begin{matrix}
7&5\\
2&0
\end{matrix}\right],
\left[
\begin{matrix}
8&5\\
2&1
\end{matrix}\right],
\left[
\begin{matrix}
0&5\\
2&2
\end{matrix}\right],
\left[
\begin{matrix}
1&5\\
2&3
\end{matrix}\right],\vspace{0.2cm}\\
&
\footnotesize\left[\begin{matrix}
2&5\\
2&4
\end{matrix}\right],
\left[
\begin{matrix}
3&5\\
2&5
\end{matrix}\right],
\left[
\begin{matrix}
4&5\\
2&6
\end{matrix}\right],
\left[
\begin{matrix}
5&5\\
2&7
\end{matrix}\right],
\left[
\begin{matrix}
6&5\\
2&8
\end{matrix}\right],
\left[
\begin{matrix}
7&3\\
3&1
\end{matrix}\right],
\left[
\begin{matrix}
8&3\\
3&2
\end{matrix}\right],
\left[
\begin{matrix}
1&3\\
3&4
\end{matrix}\right],\vspace{0.2cm}\\
&
\footnotesize\left[\begin{matrix}
2&3\\
3&5
\end{matrix}\right],
\left[
\begin{matrix}
4&3\\
3&7
\end{matrix}\right],
\left[
\begin{matrix}
5&3\\
3&8
\end{matrix}\right],
\left[
\begin{matrix}
5&1\\
4&0
\end{matrix}\right],
\left[
\begin{matrix}
6&1\\
4&1
\end{matrix}\right],
\left[
\begin{matrix}
7&1\\
4&2
\end{matrix}\right],
\left[
\begin{matrix}
8&1\\
4&3
\end{matrix}\right],
\left[
\begin{matrix}
0&1\\
4&4
\end{matrix}\right],\vspace{0.2cm}\\
&
\footnotesize\left[\begin{matrix}
1&1\\
4&5
\end{matrix}\right],
\left[
\begin{matrix}
2&1\\
4&6
\end{matrix}\right],
\left[
\begin{matrix}
3&1\\
4&7
\end{matrix}\right],
\left[
\begin{matrix}
4&1\\
4&8
\end{matrix}\right]\bigg\}.
\end{array}
\end{equation*}

And, in general, if $\alpha \in \mathrm{GL}_2(\mathbb{Z}/N\mathbb{Z})/\{\pm I_2\}$ for a positive integer $N\geq 2$, then 
one can find $\alpha'\in \mathrm{SL}_2(\mathbb{Z})$ satisfying
\begin{equation*}
\alpha\equiv\left[
\begin{matrix}
1&0\\
0&\det(\alpha)
\end{matrix}\right]
\cdot\alpha'\pmod{N}
\end{equation*}
by (\ref{decomposition}).
If a function ${g_{\mathbf{r}}(\tau)^m}/{g_{\mathbf{s}}(\tau)^m}$ lies in $\mathcal{F}_N$ for some $\mathbf{r},\mathbf{s}\in\mathbb{Q}^2\setminus \mathbb{Z}^2$ and $m\in\mathbb{Z}_{>0}$, we attain
\begin{equation*}
\left(\frac{g_{\mathbf{r}}(\tau)^m}{g_{\mathbf{s}}(\tau)^m} \right)^\alpha
=\frac{a(\mathbf{r})}{a(\mathbf{s})}\cdot
\frac{g_{ {^t}\alpha'\left[
\begin{smallmatrix}
1&0\\
0&\det(\alpha)
\end{smallmatrix}\right]\mathbf{r}}(\tau)^m}{g_{ {^t}\alpha'\left[
\begin{smallmatrix}
1&0\\
0&\det(\alpha)
\end{smallmatrix}\right]\mathbf{s}}(\tau)^m} 
\end{equation*}
by (\ref{diagonal action}) and Proposition \ref{Siegel property}.
Here
\begin{equation*}
a(\mathbf{r})=\left\{
\begin{array}{ll}
-1&\textrm{if $m N r_2(r_1-1)$ is odd and $\det(\alpha)$ is even}\vspace{0.2cm}\\
1&\textrm{otherwise}
\end{array}\right.
\end{equation*}
for $\mathbf{r}=\left[
\begin{matrix}
r_1\\
r_2
\end{matrix}\right]\in\mathbb{Q}^2\setminus \mathbb{Z}^2$.
In our case, the function ${g_{\left[\begin{smallmatrix}4/3\\ 7/9\end{smallmatrix}\right]}(\tau)^3}/{g_{\left[\begin{smallmatrix}0\\ 1/9\end{smallmatrix}\right]}(\tau)^3}$ belongs to $\mathcal{F}_9$ and so
 we can find all conjugates of $\gamma$ over $K$ by using Proposition \ref{shimura reciprocity law}.
Therefore, we derive the minimal polynomial of $\gamma$ over $\mathbb{Q}$ as
\begin{equation*}
{\footnotesize
\begin{array}{ccl}
\mathrm{min}(\gamma,\mathbb{Q})&=&\displaystyle\prod_{\tau\in\mathrm{Gal}(K_{(9)}/K)}(X-\gamma^\tau) (X-\overline{\gamma^\tau})\vspace{0.1cm}\\
&=&X^{72}+90X^{71}
+1152X^{70}
-22371X^{69}
+458820X^{68}
-29836953X^{67}\vspace{0.1cm}\\
&&
+491027613X^{66}
-1938660903X^{65}
-20725828920X^{64}
+218606201947X^{63}\vspace{0.1cm}\\
&&
-87981391440X^{62}
-9726726330846X^{61}
+74685511048146X^{60}
\vspace{0.1cm}\\
&&
-296777453271966X^{59}
+741369035579850X^{58}
-1250575046567529X^{57}
\vspace{0.1cm}\\
&&
+1668303706335570X^{56}
-3404755297594260X^{55}
+12286071601634287X^{54}
\vspace{0.1cm}\\
&&
-32591232085278402X^{53}
+35114715622084023X^{52}
+37809379416794814X^{51}
\vspace{0.1cm}\\
&&
-111424993786127475X^{50}
-44163687277340892X^{49}
+282536182740148884X^{48}
\vspace{0.1cm}\\
&&
-43713385246904949X^{47}
-422588747471994153X^{46}
+222731731243593448X^{45}\vspace{0.1cm}\\
&&
+334105708870044999X^{44}
-414268957496144781X^{43}
+13834474218095754X^{42}\vspace{0.1cm}\\
&&
+634423686065669232X^{41}
-404320599974193246X^{40}
-761298152585541393X^{39}
\vspace{0.1cm}\\
&&
+489778367476257828X^{38}
+416185685059783914X^{37}
-442068360347754785X^{36}
\vspace{0.1cm}\\
&&
+416185685059783914X^{35}
+489778367476257828X^{34}
-761298152585541393X^{33}
\vspace{0.1cm}\\
&&
-404320599974193246X^{32}
+634423686065669232X^{31}
+13834474218095754X^{30}
\vspace{0.1cm}\\
&&
-414268957496144781X^{29}
+334105708870044999X^{28}
+222731731243593448X^{27}
\vspace{0.1cm}\\
&&
-422588747471994153X^{26}
-43713385246904949X^{25} 
+282536182740148884X^{24}
\vspace{0.1cm}\\
&&
-44163687277340892X^{23}
-111424993786127475X^{22}
+37809379416794814X^{21}
\vspace{0.1cm}\\
&&
+35114715622084023X^{20} 
-32591232085278402X^{19}
+12286071601634287X^{18}
\vspace{0.1cm}\\
&& 
-3404755297594260X^{17}
+1668303706335570X^{16} 
-1250575046567529X^{15}
\vspace{0.1cm}\\
&&
+741369035579850X^{14}
-296777453271966X^{13}
+74685511048146X^{12}
\vspace{0.1cm}\\
&&
-9726726330846X^{11}
-87981391440X^{10}
+218606201947X^9
-20725828920X^8
\vspace{0.1cm}\\
&&
-1938660903X^7
+491027613X^6
-29836953X^5
+458820X^4
-22371X^3\vspace{0.1cm}\\
&&
+1152X^2
+90X+1,
\end{array}}
\end{equation*}
which claims that $\gamma$ is a unit as desired.
\par
On the other hand, the Siegel-Ramachandra invariant
\begin{equation*}
g_\F(C_0)=g_{\left[\begin{smallmatrix}0\\ 1/9\end{smallmatrix}\right]}(\theta)^{108}
\end{equation*}
also generates $K_{(9)}$ over $K$ by Theorem \ref{main theorem}.
Observe that it is a real algebraic integer, and so its minimal polynomial over $K$ has integer coefficients (\cite[Theorem 3.5 and Remark 3.6]{Jung}). 
One can then compute the minimal polynomial of $g_{\left[\begin{smallmatrix}0\\ 1/9\end{smallmatrix}\right]}(\theta)^{108}$ over $K$ as follows:
\begin{equation*}
{\footnotesize
\begin{array}{ccl}
\mathrm{min}\left(g_{\left[\begin{smallmatrix}0\\ 1/9\end{smallmatrix}\right]}(\theta)^{108},K\right)&\approx& 
X^{36}-5.8014\times 10^{16}X^{35}+1.2510\times 10^{33}X^{34}-1.2073\times 10^{49}X^{33}\\
&&+5.2876\times 10^{64}X^{32}-1.3770\times 10^{80}X^{31}+4.5041\times 10^{95}X^{30}\vspace{0.15cm}\\
&&+7.1821\times 10^{109}X^{29}+3.5929\times 10^{125}X^{28}+6.0405\times 10^{140}X^{27}\vspace{0.15cm}\\
&&-2.6727\times 10^{153}X^{26}+4.0906\times 10^{166}X^{25}+1.5461\times 10^{178}X^{24}\vspace{0.15cm}\\
&&+2.5470\times 10^{189}X^{23}-8.8165\times 10^{197}X^{22}+1.2086\times 10^{206}X^{21}\vspace{0.15cm}\\
&&-6.5232\times 10^{213}X^{20}+1.1931\times 10^{221}X^{19}+1.1532\times 10^{226}X^{18}\vspace{0.15cm}\\
&&+1.8902\times 10^{231}X^{17}+5.0656\times 10^{233}X^{16}+4.0609\times 10^{234}X^{15}\vspace{0.15cm}\\
&&+1.3087\times 10^{235}X^{14}+1.8279\times 10^{235}X^{13}+1.0208\times 10^{235}X^{12}\vspace{0.15cm}\\
&&+1.3732\times 10^{234}X^{11}-5.1693\times 10^{229}X^{10}+1.5848\times 10^{225}X^{9}\vspace{0.15cm}\\
&&+1.2122\times 10^{218}X^{8}-1.7829\times 10^{211}X^{7}+1.2402\times 10^{204}X^{6}\vspace{0.15cm}\\
&&+5.8968\times 10^{184}X^{5}+7.7183\times 10^{164}X^{4}+1.3109\times 10^{144}X^{3}\vspace{0.15cm}\\
&&-1.2605\times 10^{110}X^{2}+1.1125\times 10^{76}X+5.8150\times 10^{25}.
\end{array}}
\end{equation*}
But, we notice here that the coefficients of $\mathrm{min}(\gamma,\mathbb{Q})$ are much smaller than those of $\mathrm{min}\big(g_{\left[\begin{smallmatrix}0\\ 1/9\end{smallmatrix}\right]}(\theta)^{108},K\big)$.

\item[(ii)]
Let $\F=5\O_K$. 
Then 5 is inert in $K$ and so $|\mathbf{G}_i|>3$ for every $i$.
By Lemma \ref{existence of ray class} we get an element $\beta\in\O_K$ prime to $30$ for which $N_{K/\mathbb{Q}}(\beta)\equiv 1\pmod{\o_{K_\F}}$ and the ray class $[(\beta)]\in\mathrm{Cl}(\F)$ is of order $3$.
Indeed, $\beta=6\sqrt{-5}+7$ satisfies these conditions.
Since $\beta=12\theta+13$, the special value
\begin{equation*}
\gamma=\frac{g_{\left[\begin{smallmatrix}12/5\\ 13/5\end{smallmatrix}\right]}(\theta)}{g_{\left[\begin{smallmatrix}0\\ 1/5\end{smallmatrix}\right]}(\theta)}
\end{equation*}
generates $K_{(5)}$ over $K$ by Theorem \ref{main theorem2}.
\par
Since the function ${g_{\left[\begin{smallmatrix}12/5\\ 13/5\end{smallmatrix}\right]}(\tau)}/{g_{\left[\begin{smallmatrix}0\\ 1/5\end{smallmatrix}\right]}(\tau)}$ belongs to $\mathcal{F}_{25}$ by Proposition \ref{complex multiplication}, 
in order to estimate the minimal polynomial of $\gamma$ over $\mathbb{Q}$ we need to describe the action of $\mathrm{Gal}(K_{(25)}/K)$.
Since
\begin{equation*}
[K_{(25)}:K]=300 \quad\textrm{and}\quad [K_{(5)}:K]=12,
\end{equation*}
we have
\begin{equation*}
\prod_{\tau\in\mathrm{Gal}(K_{(25)}/K)}(X-\gamma^\tau)=\mathrm{min}(\gamma,K)^{25}
\end{equation*}
and hence we can find all conjugates of $\gamma$ over $K$ in a similar way as in (i).
And, the minimal polynomial of $\gamma$ over $\mathbb{Q}$ is
\begin{equation*}
{\footnotesize
\begin{array}{ccl}
\mathrm{min}(\gamma,\mathbb{Q})&=& 
X^{24}
-3X^{23}
+3X^{22}
-3X^{21}
+11X^{20}
-3X^{19}
+24X^{18}
-24X^{17}
+4X^{16}-18X^{15}\vspace{0.1cm}\\
&&
+53X^{14}
-39X^{13}
-11X^{12}
-39X^{11}
+53X^{10}
-18X^{9}
+4X^{8}
-24X^{7}
+24X^{6}
\vspace{0.1cm}\\
&&-3X^{5}+11X^{4}-3X^{3}+3X^{2}-3X+1.
\end{array}}
\end{equation*}

On the other hand, the Siegel-Ramachandra invariant
\begin{equation*}
g_\F(C_0)=g_{\left[\begin{smallmatrix}0\\ 1/5\end{smallmatrix}\right]}(\theta)^{60}
\end{equation*}
also generates $K_{(5)}$ over $K$ by Theorem \ref{main theorem} and its minimal polynomial over $K$ is
\begin{equation*}
{\footnotesize
\begin{array}{ccl}
\mathrm{min}\left(g_{\left[\begin{smallmatrix}0\\ 1/5\end{smallmatrix}\right]}(\theta)^{60},K\right)&=& 
X^{12}
-531770250 X^{11}
+52496782397690625 X^{10}
\\
&&
+12347712418332056278906250 X^{9}\vspace{0.15cm}\\
&&
+517064715767117085870064453125000 X^{8}\vspace{0.15cm}\\
&&
+5105793070560695709489861859357910156250 X^{7}\vspace{0.15cm}\\
&&
+30043009324891990472511274397078094482421875X^{6}\vspace{0.15cm}\\
&&
+356967020673816044809943223760162353515625000X^{5}\vspace{0.15cm}\\
&&
+5338772150500577473141088454029560089111328125X^{4}\vspace{0.15cm}\\
&&
+263440400470778826352188828480243682861328125X^{3}\vspace{0.15cm}\\
&&
-4471591562072879160572290420532226562500 X^{2}\vspace{0.15cm}\\
&&
+62983472112150751054286956787109375X
+931322574615478515625.
\end{array}}
\end{equation*}

\end{itemize}
\end{example}

\begin{example}\label{even example}
Let $K=\mathbb{Q}(\sqrt{-5})$ and $\theta=\sqrt{-5}$. 
Then the class number $h_K$ is 2.
\begin{itemize}

\item[(i)]
Let $\F=4\O_K$.
Then $2$ is ramified in $K$ and hence $|\mathbf{G}_i|>2$ for every $i$.
Since $\mathbf{G}\cong W_{4,\theta}/\{\pm I_2\}$ and
\begin{equation*}
W_{4,\theta}/\{\pm I_2\}=\left\{ 
\footnotesize\left[\begin{matrix}
1&0\\
0&1
\end{matrix}\right],
\left[
\begin{matrix}
1&2\\
2&1
\end{matrix}\right],
\left[
\begin{matrix}
0&3\\
1&0
\end{matrix}\right],
\left[
\begin{matrix}
2&3\\
1&2
\end{matrix}\right]\right\},
\end{equation*}
one can check by using Proposition \ref{principal class action}, \ref{Siegel property} that
the ray class $C'=[(3\sqrt{-5}+2)]$ in $\mathrm{Cl}(\F)$ is of order $2$ and satisfies
\begin{equation*}
\left(\frac{g_\F(C')}{g_\F(C_0)}\right)^{\sigma(C)}\neq \frac{g_\F(C')}{g_\F(C_0)}
\end{equation*}
for every $C\in \mathbf{G}\setminus\{1\}$.
Thus we see from Remark \ref{additional case} that
\begin{equation*}
K_{(4)}=K\left(\frac{g_\F(C')}{g_\F(C_0)}\right).
\end{equation*}
Furthermore, since $N_{K/\mathbb{Q}}(3\sqrt{-5}+2)\equiv 1\pmod{48}$,
the special value
\begin{equation*}
\gamma=\frac{g_{\left[\begin{smallmatrix}3/4\\2/4\end{smallmatrix}\right]}(\theta)^{4}}{g_{\left[\begin{smallmatrix}0\\ 1/4\end{smallmatrix}\right]}(\theta)^{4}}
\end{equation*}
also generates $K_{(4)}$ over $K$.
\par
Now that the function ${g_{\left[\begin{smallmatrix}3/4\\ 2/4\end{smallmatrix}\right]}(\tau)^4}/{g_{\left[\begin{smallmatrix}0\\ 1/4\end{smallmatrix}\right]}(\tau)^4}$ lies in $\mathcal{F}_{8}$ by Proposition \ref{complex multiplication}, in order to estimate the minimal polynomial of $\gamma$ over $\mathbb{Q}$ we should know the action of $\mathrm{Gal}(K_{(8)}/K)$.
The form class group $C(d_K)$ of discriminant $d_K=-20$ consists of two reduced quadratic forms
\begin{equation*}
Q_1=[1,0,5]\quad\textrm{and}\quad Q_2=[2,2,3].
\end{equation*}
Thus we have
\begin{equation*}
\theta_{Q_1}=\sqrt{-5},~\beta_{Q_1}=
\left[\begin{matrix}
1&0\\
0&1
\end{matrix}\right]\quad\textrm{and}\quad
\theta_{Q_2}=\frac{-1+\sqrt{-5}}{2},~\beta_{Q_2}=
\left[\begin{matrix}
-1&-3\\
1&0
\end{matrix}\right].
\end{equation*}
Then $W_{8,\theta}/\{\pm I_2\}$ and $C(d_K)$ determine the group $\mathrm{Gal}(K_{(8)}/K)$ by Proposition \ref{shimura reciprocity law}.
And, it follows from Proposition \ref{order of ray class field} that
\begin{equation*}
[K_{(8)}:K]=32 \quad\textrm{and}\quad [K_{(4)}:K]=8,
\end{equation*}
and so we attain
\begin{equation*}
\prod_{\tau\in\mathrm{Gal}(K_{(8)}/K)}(X-\gamma^\tau)=\mathrm{min}(\gamma,K)^{4}.
\end{equation*}
Therefore, the minimal polynomial of $\gamma$ over $\mathbb{Q}$ is
{\footnotesize\begin{equation*}
\mathrm{min}(\gamma,\mathbb{Q})= X^{16}
+3024X^{14}
+128700X^{12}
+53296X^{10}
-124026X^{8}
+53296X^{6}
+128700X^{4}
+3024X^{2}+1.
\end{equation*}}
On the other hand, we deduce
\begin{equation*}
{\footnotesize
\begin{array}{ccl}
\mathrm{min}\left(g_{\left[\begin{smallmatrix}0\\ 1/4\end{smallmatrix}\right]}(\theta)^{48},K\right)&=& 
X^{8}
-1597237832768X^{7}
-15846881298723072X^{6}
\\
&&
-26992839895872106496X^{5}
+655492492138238044037120X^{4}\vspace{0.15cm}
\\
&&
-169817799503383057556832256X^{3}
-20680171763956163581837312X^{2}\vspace{0.15cm}
\\
&&
-2550974942361763927031808X
+16777216.
\end{array}}
\end{equation*}

\item[(ii)]
Let $\F=5\O_K$.
Then $5$ is ramified in $K$ and hence $|\mathbf{G}_i|>2$ for every $i$.
Since $5$ divides $d_K=-20$, the special value
\begin{equation*}
\gamma=\frac{g_{\left[\begin{smallmatrix}6/5\\ 1/5\end{smallmatrix}\right]}(\theta)}{g_{\left[\begin{smallmatrix}0\\ 1/5\end{smallmatrix}\right]}(\theta)}
\end{equation*}
generates $K_{(5)}$ over $K$ by Corollary \ref{main corollary}.
\par
It then follows from Proposition \ref{complex multiplication}, \ref{order of ray class field} that  ${g_{\left[\begin{smallmatrix}6/5\\ 1/5\end{smallmatrix}\right]}(\tau)}/{g_{\left[\begin{smallmatrix}0\\ 1/5\end{smallmatrix}\right]}(\tau)}\in\mathcal{F}_{25}$ and 
\begin{equation*}
[K_{(25)}:K]=500 \quad\textrm{and}\quad [K_{(5)}:K]=20.
\end{equation*}
And, we induce
\begin{equation*}
\prod_{\tau\in\mathrm{Gal}(K_{(25)}/K)}(X-\gamma^\tau)=\mathrm{min}(\gamma,K)^{25}.
\end{equation*}
Observe that
\begin{equation*}
\theta_{Q_1}=\sqrt{-5},~\beta_{Q_1}=
\left[\begin{matrix}
1&0\\
0&1
\end{matrix}\right]\quad\textrm{and}\quad
\theta_{Q_2}=\frac{-1+\sqrt{-5}}{2},~\beta_{Q_2}=
\left[\begin{matrix}
2&1\\
0&1
\end{matrix}\right]
\end{equation*}
in this case.
Therefore, we obtain the minimal polynomial of $\gamma$ over $\mathbb{Q}$ as follows:
\begin{equation*}
{\footnotesize
\begin{array}{ccl}
\mathrm{min}(\gamma,\mathbb{Q})&=& 
X^{40}
+10X^{39}
+50X^{38}
+170X^{37}
+420X^{36}
+732X^{35}
+965X^{34}
+1380X^{33}\vspace{0.1cm}\\
&&
+2545X^{32}
+4460X^{31}
+6798X^{30}
+7880X^{29}
+1605X^{28}
-11800X^{27}
-11035X^{26}\vspace{0.1cm}\\
&&
+15554X^{25}
+31975X^{24}
+3050X^{23}
-29125X^{22}
-20050X^{21}
-2145X^{20}
-20050X^{19}\vspace{0.1cm}\\
&&
-29125X^{18}
+3050X^{17}
+31975X^{16}
+15554X^{15}
-11035X^{14}
-11800X^{13}
+1605X^{12}\vspace{0.1cm}\\
&&
+7880X^{11}
+6798X^{10}
+4460X^{9}
+2545X^{8}
+1380X^{7}
+965X^{6}
+732X^{5}
+420X^{4}\vspace{0.1cm}\\
&&
+170X^{3}
+50X^{2}
10X+1.
\end{array}}
\end{equation*}

\end{itemize}

\end{example}

\section{Application to quadratic Diophantine equations}

Let $n$ be a square-free positive integer, $K=\mathbb{Q}(\sqrt{-n})$ and $\theta$ be as in (\ref{theta}).
We assume $-n\equiv 2,3\pmod{4}$ so that $d_K\equiv 0\pmod{4}$ and $\O_K=\mathbb{Z}[\sqrt{-n}]$.
By means of ray class invariants over $K$, Cho \cite{Cho} provided a criterion whether a given odd prime $p$ can be written in the form $p=x^2+ny^2$ for some $x,y\in\mathbb{Z}$ with additional conditions $x\equiv 1 ~(\bmod{~N}),~ y\equiv 0~(\bmod{~N})$ for each positive integer $N$.

\begin{proposition}\label{Diophantine}
For a positive integer $N$, we let $f_{N}(X)\in\mathbb{Z}[X]$ be the minimal polynomial of a real algebraic integer which generates $K_{(N)}$ over $K$.
If an odd prime $p$ divides neither $nN$ nor the discriminant of $f_{N}(X)$, then
\begin{equation*}
\left(
\begin{array}{c}
\textrm{$p=x^2+ny^2$ with $x,y\in\mathbb{Z}$}\\
x\equiv 1 ~(\bmod{~N}),~ y\equiv 0~(\bmod{~N})
\end{array}\right)\Longleftrightarrow
\left(
\begin{array}{c}
\textrm{$\left(\frac{-n}{p}\right)=1$ and $f_{N}(X)\equiv 0~(\bmod{~p})$}\\
\textrm{has an integer solution}
\end{array}\right)
\end{equation*}
where $(\frac{-n}{p})$ is the Kronecker symbol.

\end{proposition}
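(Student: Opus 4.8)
The plan is to convert the Diophantine statement into a complete-splitting condition in the ray class field $K_{(N)}=K_{N\O_K}$ (Proposition \ref{complex multiplication}) and then to read off that splitting from the reduction of $f_{N}$ modulo $p$.

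The first ingredient is the class-field-theoretic reformulation: for an odd prime $p$ with $p\nmid nN$, the prime $p$ has the shape $x^2+ny^2$ with $x\equiv 1$ and $y\equiv 0\pmod N$ if and only if $p$ splits completely in $K_{(N)}/\Q$. For the forward direction I would put $\alpha=x+y\sqrt{-n}\in\O_K=\mathbb{Z}[\sqrt{-n}]$; then $\mathcal{N}((\alpha))=N_{K/\Q}(\alpha)=p$, so $(\alpha)$ is a degree-one prime over $p$ and $p$ splits in $K$ (it is unramified since $p\nmid d_K=-4n$), while $\alpha\equiv 1\pmod{N\O_K}$ forces $[(\alpha)]=1$ in $\mathrm{Cl}(N\O_K)$, whence $(\alpha)$ splits completely in $K_{(N)}/K$ via the Artin isomorphism (\ref{artin map}); combining these, $p$ splits completely in $K_{(N)}/\Q$. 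For the converse I would pick a prime $\mathfrak{p}$ of $K$ over $p$; complete splitting forces $[\mathfrak{p}]=1$ in $\mathrm{Cl}(N\O_K)$, so $\mathfrak{p}=(\alpha)$ with $\alpha=x+y\sqrt{-n}\equiv 1\pmod{N\O_K}$, and then $x^2+ny^2=N_{K/\Q}(\alpha)=\mathcal{N}(\mathfrak{p})=p$ with $x\equiv 1$ and $y\equiv 0\pmod N$.

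Next I would analyze the real generator. Let $\eta$ be a real algebraic integer with $\mathrm{min}(\eta,K)=f_N$ and $K(\eta)=K_{(N)}$. Since $\eta\in\mathbb{R}$ but $K\not\subset\mathbb{R}$ we have $K\cap\Q(\eta)=\Q$, hence $[K_{(N)}:\Q(\eta)]=[K:\Q]=2$ and $\deg_{\Q}\eta=[K_{(N)}:K]=\deg f_N$, so that $f_N=\mathrm{min}(\eta,\Q)\in\mathbb{Z}[X]$. As $N\O_K$ is stable under $\mathrm{Gal}(K/\Q)$, the field $K_{(N)}$ is Galois over $\Q$; write $G=\mathrm{Gal}(K_{(N)}/\Q)$ and $G'=\mathrm{Gal}(K_{(N)}/K)\triangleleft G$ (of index two), and observe that complex conjugation $c\in G$ lies outside $G'$ and generates $\mathrm{Gal}(K_{(N)}/\Q(\eta))=\langle c\rangle$. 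The assumption $p\nmid nN\cdot\mathrm{disc}(f_N)$ ensures that $p$ is unramified in $K_{(N)}$ and that Dedekind's theorem on prime factorization applies to $\Q(\eta)=\Q[X]/(f_N)$. Hence $(\frac{-n}{p})=1\iff p$ splits in $K\iff\mathrm{Frob}_p\subseteq G'$, and ``$f_N(X)\equiv 0\pmod p$ has an integer solution'' $\iff\Q(\eta)$ has a prime of residue degree one above $p$ $\iff$ the conjugacy class $\mathrm{Frob}_p$ in $G$ meets $\langle c\rangle=\{1,c\}$. If both conditions hold, then $\mathrm{Frob}_p\subseteq G'$ contains $1$ or $c$; since $G'$ is normal and $c\notin G'$, it cannot contain $c$, so $\mathrm{Frob}_p=\{1\}$, i.e.\ $p$ splits completely in $K_{(N)}$; conversely, if $p$ splits completely then $\mathrm{Frob}_p=\{1\}$ and both conditions are clear. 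Combining this with the first reformulation yields the proposition.

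I expect the routine pieces to be Dedekind's theorem together with the Artin reciprocity dictionary, and the delicate point to be the final ``promotion'' step: a \emph{single} root of $f_N$ modulo $p$, once $p$ is also known to split in $K$, already forces $p$ to split completely in $K_{(N)}$ --- this is exactly what makes the weak hypothesis ``has an integer solution'' sufficient. It rests on the two structural facts that $\mathrm{Gal}(K_{(N)}/\Q(\eta))=\langle c\rangle$ (using that $\eta$ is real and $K$ is imaginary, so $c$ acts nontrivially on $K$) and that $G'=\mathrm{Gal}(K_{(N)}/K)$ is normal in $G$; I would make sure to establish both before invoking the Frobenius orbit description of the primes of $\Q(\eta)$.
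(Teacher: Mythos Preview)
Your argument is correct. The paper does not actually prove this proposition: its entire proof is the citation ``\cite[Theorem 1]{Cho}''. You have supplied a self-contained proof where the paper gives none, and your route---rewriting the Diophantine condition as complete splitting of $p$ in $K_{(N)}/\Q$ via the Artin map, then using the real generator $\eta$ so that $\mathrm{Gal}(K_{(N)}/\Q(\eta))=\langle c\rangle$ with $c\notin\mathrm{Gal}(K_{(N)}/K)$, and finally reading off the splitting from the conjugacy class $\mathrm{Frob}_p$ via Dedekind's theorem---is exactly the standard mechanism underlying Cho's result. The ``promotion'' step you flagged is indeed the crux, and your justification (that $\mathrm{Frob}_p\subseteq G'$ together with $\mathrm{Frob}_p\cap\{1,c\}\neq\emptyset$ forces $\mathrm{Frob}_p=\{1\}$ because $G'$ is normal and $c\notin G'$) is sound. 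One small point worth making explicit in a write-up: the hypothesis $p\nmid nN$ with $p$ odd gives unramifiedness of $p$ in $K_{(N)}/\Q$ directly (via the conductor of $K_{(N)}/K$ and $d_K=-4n$), independent of the discriminant hypothesis on $f_N$; the latter is used only to invoke Dedekind's factorization theorem in $\Q(\eta)$.
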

\begin{proof}
\cite[Theorem 1]{Cho}.
\end{proof}

\begin{lemma}\label{real generator}
Let $N\geq 2$ be an integer.
\begin{itemize}
\item[\textup{(i)}] For $s\in \mathbb{Z}\setminus N\mathbb{Z}$,
\begin{equation*}
\frac{g_{\left[\begin{smallmatrix}0\\ s/N\end{smallmatrix}\right]}(\theta)}{g_{\left[\begin{smallmatrix}0\\ 1/N\end{smallmatrix}\right]}(\theta)}
\end{equation*}
is a real number.
\item[\textup{(ii)}]
For $t\in \mathbb{Z}$,
\begin{equation*}
ie^{ \frac{ t }{2N}\pi i} \frac{g_{\left[\begin{smallmatrix}1/2\\ t/N\end{smallmatrix}\right]}(\theta)}{g_{\left[\begin{smallmatrix}0\\ 1/N\end{smallmatrix}\right]}(\theta)}
\end{equation*}
is a real number.
\end{itemize}
\end{lemma}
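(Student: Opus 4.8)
The plan is to reduce both assertions to the behaviour of the Siegel functions $g_{\mathbf{r}}$ under complex conjugation at the purely imaginary point $\theta=\sqrt{-n}$, for which $\overline{\theta}=-\theta$. Note that $q_\theta=e^{2\pi i\theta}=e^{-2\pi\sqrt{n}}$ is a positive real number, so that for $\mathbf{r}=\left[\begin{smallmatrix}r_1\\r_2\end{smallmatrix}\right]\in\mathbb{Q}^2\setminus\mathbb{Z}^2$ and $z=r_1\theta+r_2$ one has $q_z=q_\theta^{r_1}e^{2\pi ir_2}$ and $\overline{q_z}=q_\theta^{r_1}e^{-2\pi ir_2}$; the latter is precisely the value of the variable $q_z$ attached to $\mathbf{r}'=\left[\begin{smallmatrix}r_1\\-r_2\end{smallmatrix}\right]$. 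First I would conjugate the defining infinite product of $g_{\mathbf{r}}(\theta)$ factor by factor: $q_\theta^{\frac12\mathbf{B}_2(r_1)}$ is a positive real and hence fixed, $\overline{e^{\pi ir_2(r_1-1)}}=e^{\pi i(-r_2)(r_1-1)}$, and each $1-q_\theta^nq_z^{\pm1}$ goes to $1-q_\theta^n(\overline{q_z})^{\pm1}$. Collecting the pieces gives the key identity
\[
\overline{g_{\left[\begin{smallmatrix}r_1\\r_2\end{smallmatrix}\right]}(\theta)}=g_{\left[\begin{smallmatrix}r_1\\-r_2\end{smallmatrix}\right]}(\theta)\qquad\bigl(\mathbf{r}\in\mathbb{Q}^2\setminus\mathbb{Z}^2\bigr);
\]
all the quotients in the statement are defined since $g_{\mathbf{r}}$ has no zeros on $\mathbb{H}$.

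For (i) I would apply this with $\mathbf{r}=\left[\begin{smallmatrix}0\\s/N\end{smallmatrix}\right]$ and then invoke the elementary relation $g_{-\mathbf{r}}(\tau)=-g_{\mathbf{r}}(\tau)$ (immediate from the product, cf.\ \cite[p.27]{Kubert}) to get
\[
\overline{g_{\left[\begin{smallmatrix}0\\s/N\end{smallmatrix}\right]}(\theta)}=g_{\left[\begin{smallmatrix}0\\-s/N\end{smallmatrix}\right]}(\theta)=-g_{\left[\begin{smallmatrix}0\\s/N\end{smallmatrix}\right]}(\theta).
\]
Hence $g_{\left[\begin{smallmatrix}0\\s/N\end{smallmatrix}\right]}(\theta)$ is purely imaginary for every $s\in\mathbb{Z}\setminus N\mathbb{Z}$, in particular for $s=1$, and the ratio of two purely imaginary numbers is real.

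For (ii) I would first record the translation by $\left[\begin{smallmatrix}-1\\0\end{smallmatrix}\right]$, which the product formula yields in the form $g_{\left[\begin{smallmatrix}r_1-1\\r_2\end{smallmatrix}\right]}(\tau)=-e^{\pi ir_2}g_{\left[\begin{smallmatrix}r_1\\r_2\end{smallmatrix}\right]}(\tau)$. Applying this with $\left[\begin{smallmatrix}r_1\\r_2\end{smallmatrix}\right]=\left[\begin{smallmatrix}1/2\\t/N\end{smallmatrix}\right]$, then $g_{-\mathbf{v}}=-g_{\mathbf{v}}$ with $\mathbf{v}=\left[\begin{smallmatrix}-1/2\\t/N\end{smallmatrix}\right]$, and then the key identity, one arrives at
\[
\overline{g_{\left[\begin{smallmatrix}1/2\\t/N\end{smallmatrix}\right]}(\theta)}=g_{\left[\begin{smallmatrix}1/2\\-t/N\end{smallmatrix}\right]}(\theta)=e^{\pi it/N}g_{\left[\begin{smallmatrix}1/2\\t/N\end{smallmatrix}\right]}(\theta).
\]
Writing $w=ie^{\frac{t}{2N}\pi i}g_{\left[\begin{smallmatrix}1/2\\t/N\end{smallmatrix}\right]}(\theta)/g_{\left[\begin{smallmatrix}0\\1/N\end{smallmatrix}\right]}(\theta)$ and conjugating, the factor $i$ gives $-1$, $e^{\frac{t}{2N}\pi i}$ becomes $e^{-\frac{t}{2N}\pi i}$, the numerator picks up $e^{\pi it/N}$ by the last display, and the denominator picks up $-1$ by (i) with $s=1$; these combine into $e^{-\frac{t}{2N}\pi i}e^{\pi it/N}=e^{\frac{t}{2N}\pi i}$, so $\overline{w}=w$ and $w$ is real.

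The argument is entirely elementary; the only delicate point is tracking the roots of unity — the sign in $g_{-\mathbf{r}}=-g_{\mathbf{r}}$, the phase $-e^{\pi ir_2}$ from the translation by $\left[\begin{smallmatrix}-1\\0\end{smallmatrix}\right]$, and the normalizing factor $ie^{\frac{t}{2N}\pi i}$ — which must cancel to $+1$. To be safe I would derive each of these directly from the infinite product defining $g_{\mathbf{r}}$ rather than quote a transformation table, since an off-by-a-sign there would be the only way the proof could fail.
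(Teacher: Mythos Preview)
Your argument is correct. Both your proof and the paper's rest on the same underlying fact---that $q_\theta$ is a positive real number, so complex conjugation of $g_{\mathbf{r}}(\theta)$ amounts to the substitution $r_2\mapsto -r_2$---and both deduce the stronger intermediate statements that $g_{\left[\begin{smallmatrix}0\\ s/N\end{smallmatrix}\right]}(\theta)$ is purely imaginary and that $e^{\frac{t}{2N}\pi i}g_{\left[\begin{smallmatrix}1/2\\ t/N\end{smallmatrix}\right]}(\theta)$ is real. The packaging differs: the paper expands the infinite products explicitly, pairing the factors $(1-q_\theta^n q_z)(1-q_\theta^n q_z^{-1})$ into manifestly real expressions $1-q_\theta^n(\zeta_N^{s}+\zeta_N^{-s})+q_\theta^{2n}$ and reading off the leftover phase directly; you instead encode the same cancellation via the transformation identities $g_{-\mathbf{r}}=-g_{\mathbf{r}}$ and $g_{\left[\begin{smallmatrix}r_1-1\\ r_2\end{smallmatrix}\right]}=-e^{\pi i r_2}g_{\left[\begin{smallmatrix}r_1\\ r_2\end{smallmatrix}\right]}$ and then check $\overline{w}=w$. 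Your route is a bit more modular and reusable (the key identity $\overline{g_{\left[\begin{smallmatrix}r_1\\ r_2\end{smallmatrix}\right]}(\theta)}=g_{\left[\begin{smallmatrix}r_1\\ -r_2\end{smallmatrix}\right]}(\theta)$ would serve for any $r_1$), while the paper's direct expansion avoids having to verify the exact root-of-unity constants in the transformation laws.
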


\begin{proof}
We obtain by definition
\begin{equation*}
\begin{array}{ccl}
e^{ \frac{t }{2N}\pi i} g_{\left[\begin{smallmatrix}1/2\\ t/N\end{smallmatrix}\right]}(\theta)&=&
-q_{\theta}^{\frac{1}{2}\mathbf{B}_2(\frac{1}{2})}(1-q_\theta^{\frac{1}{2}}\zeta_N^t)\displaystyle \prod_{n=1}^\infty(1-q_\theta^{n+\frac{1}{2}}\zeta_N^t)
(1-q_\theta^{n-\frac{1}{2}}\zeta_N^{-t})\\
&=&-q_{\theta}^{\frac{1}{2}\mathbf{B}_2(\frac{1}{2})}\displaystyle \prod_{n=0}^\infty(1-q_\theta^{n+\frac{1}{2}}\zeta_N^t)
\prod_{n=1}^\infty(1-q_\theta^{n-\frac{1}{2}}\zeta_N^{-t})\\
&=&-q_{\theta}^{\frac{1}{2}\mathbf{B}_2(\frac{1}{2})}\displaystyle \prod_{n=1}^\infty(1-q_\theta^{n-\frac{1}{2}}\zeta_N^t)
(1-q_\theta^{n-\frac{1}{2}}\zeta_N^{-t})\\
&=&-q_{\theta}^{\frac{1}{2}\mathbf{B}_2(\frac{1}{2})}\displaystyle \prod_{n=1}^\infty
\left\{1-q_\theta^{n-\frac{1}{2}}(\zeta_N^t+\zeta_N^{-t})+q_\theta^{2n-1}\right\}.
\end{array}
\end{equation*}
And, we deduce
\begin{equation*}
\begin{array}{ccl}
-i g_{\left[\begin{smallmatrix}0\\ s/N\end{smallmatrix}\right]}(\theta)
&=&iq_{\theta}^{\frac{1}{12}}e^{-\frac{s}{N}\pi i }(1-\zeta_N^s)\displaystyle \prod_{n=1}^\infty(1-q_\theta^{n}\zeta_N^s)
(1-q_\theta^{n}\zeta_N^{-s})\\
&=&iq_{\theta}^{\frac{1}{12}}(\zeta_{2N}^{-s}-\zeta_{2N}^s)\displaystyle \prod_{n=1}^\infty\left\{1-q_\theta^{n}(\zeta_N^s+\zeta_N^{-s})+q_\theta^{2n}\right\}.
\end{array}
\end{equation*}
Since $q_\theta$, $\zeta_N^t+\zeta_N^{-t}$ and $i(\zeta_{2N}^{-s}-\zeta_{2N}^s)$ are real numbers, we get the lemma.

\end{proof}

\begin{theorem}\label{real ray class invariant}
Let $N$ be a positive integer and $\F=N\O_K$ with prime ideal factorization
$\F=\prod_{i=1}^r \mathfrak{p}_i^{n_i}$.
Assume that $K_\F\neq K_{\F\P_i^{-n_i}}$ and $|\mathbf{G}_i|>2$ for every $i$.
\begin{itemize}
\item[\textup{(i)}]
Let $s$ be an integer prime to $N$ such that the order of $[s]$ in $(\mathbb{Z}/N\mathbb{Z})^\times/\{\pm 1\}$ is an odd prime $p$ \textup{(}if any\textup{)}.
If $p>3$ or $|\mathbf{G}_i|>3$ for every $i$, then the special value
\begin{equation*}
\frac{g_{\left[\begin{smallmatrix}0\\ s/N\end{smallmatrix}\right]}(\theta)^m}{g_{\left[\begin{smallmatrix}0\\ 1/N\end{smallmatrix}\right]}(\theta)^m}
\end{equation*}
generates $K_{(N)}$ over $K$ as a real algebraic integer, where $m$ is an integer dividing $N$ for which $m(s^2-1)\equiv 0\pmod{\mathrm{gcd}(2,N)\cdot N}$ and $m\equiv N\pmod{2}$.

\item[\textup{(ii)}]
When $N$ is even, we set $C'=\big[(({N}/{2})\theta+t)\big]\in\mathrm{Cl}(\F)$ with $t\in\mathbb{Z}$ such that $t^2\equiv 1\pmod{N}$.
We further assume that 
\begin{equation*}
\left(\frac{g_\F(C')}{g_\F(C_0)}\right)^{\sigma(C)}\neq \frac{g_\F(C')}{g_\F(C_0)}
\end{equation*}
for every $C\in \mathbf{G}\setminus\{1\}$.
If $4$ divides $Nn$, then the special value
\begin{equation*}
e^{ \frac{ 2t }{N}\pi i} \frac{g_{\left[\begin{smallmatrix}1/2\\ t/N\end{smallmatrix}\right]}(\theta)^4}{g_{\left[\begin{smallmatrix}0\\ 1/N\end{smallmatrix}\right]}(\theta)^4}
\end{equation*}
is a real algebraic integer and generates $K_{(N)}$ over $K$.
In particular, if $4$ divides $N$ then the special value
\begin{equation*}
e^{ \frac{ t }{N}\pi i} \frac{g_{\left[\begin{smallmatrix}1/2\\ t/N\end{smallmatrix}\right]}(\theta)^2}{g_{\left[\begin{smallmatrix}0\\ 1/N\end{smallmatrix}\right]}(\theta)^2}
\end{equation*}
also generates $K_{(N)}$ over $K$ as a real algebraic integer.

\end{itemize}

\end{theorem}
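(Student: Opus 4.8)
Both statements can be obtained by the same three moves. First, realise the displayed quantity as the value at $\theta$ of a function lying in $\mathcal{F}_N$, so that it lies in $K_{(N)}=K_\F$ by Proposition \ref{complex multiplication}. Second, identify a suitable integral power of it with a quotient $g_\F(C')/g_\F(C_0)$ of Siegel-Ramachandra invariants for an explicit ray class $C'$. Third, check that $C'$ has prime order in $\mathbf{G}$ and invoke Theorem \ref{main theorem2} (resp.\ Remark \ref{additional case}) to see that this quotient already generates $K_\F$. The displayed quantity then generates $K_\F$ since it lies in $K_\F$ and has a generator among its powers; realness is read off from Lemma \ref{real generator}; and it is an algebraic integer because the power used in the second move is a unit by Proposition \ref{Galois action}(iii).

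For (i), set $C'=[(s)]\in\mathbf{G}$. Applying Proposition \ref{quad relation} to $g_{\left[\begin{smallmatrix}0\\ s/N\end{smallmatrix}\right]}(\tau)^{m}\,g_{\left[\begin{smallmatrix}0\\ 1/N\end{smallmatrix}\right]}(\tau)^{-m}$, the only nonvacuous congruence is $m(s^{2}-1)\equiv 0\pmod{\gcd(2,N)\cdot N}$, which is hypothesised, and the accompanying root of unity $\zeta_{2N}^{\,m(s-1)}$ lies in $\mathbb{Q}(\zeta_{N})\subset\mathcal{F}_N$ (using $m\equiv N\pmod 2$ when $N$ is even, and $\mathbb{Q}(\zeta_{2N})=\mathbb{Q}(\zeta_N)$ when $N$ is odd); hence the quotient lies in $\mathcal{F}_N$ and its value at $\theta$ lies in $K_{(N)}$. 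Since $m\mid N$, the $(12N/m)$-th power of this value is $g_{\left[\begin{smallmatrix}0\\ s/N\end{smallmatrix}\right]}(\theta)^{12N}\,g_{\left[\begin{smallmatrix}0\\ 1/N\end{smallmatrix}\right]}(\theta)^{-12N}$; by Propositions \ref{principal class action}, \ref{Siegel property}(ii) and \ref{Galois action}(ii) it equals $g_\F(C')/g_\F(C_0)$, since $\sigma([(s)])$ acts on $g_{\left[\begin{smallmatrix}0\\ 1/N\end{smallmatrix}\right]}(\tau)^{12N}$ through $\left[\begin{smallmatrix}s&0\\0&s\end{smallmatrix}\right]$, which carries $\left[\begin{smallmatrix}0\\ 1/N\end{smallmatrix}\right]$ to $\left[\begin{smallmatrix}0\\ s/N\end{smallmatrix}\right]$. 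Next, for $N\ge 2$ no unit of $\O_K$ other than $\pm1$ is congruent to a rational integer modulo $N\O_K$, so the order of $C'$ in $\mathbf{G}$ is the order of $[s]$ in $(\mathbb{Z}/N\mathbb{Z})^{\times}/\{\pm1\}$, namely the odd prime $p$; as the remaining hypotheses of Theorem \ref{main theorem2} ($K_\F\neq K_{\F\P_i^{-n_i}}$ and $|\mathbf{G}_i|>2$ for all $i$, and $p>3$ or $|\mathbf{G}_i|>3$ for all $i$) are ours, it follows that $g_\F(C')/g_\F(C_0)$ generates $K_\F$, hence so does the displayed value. It is real by Lemma \ref{real generator}(i), and being an element of $K_\F$ a nonzero power of which is a unit, it is itself a unit.

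For (ii), here $d_K\equiv 0\pmod 4$, so $\theta=\sqrt{-n}$, $B_\theta=0$, $C_\theta=n$; put $\beta'=(N/2)\theta+t$ and $C'=[(\beta')]$. One checks first that $\beta'$ is prime to $N$: its norm is $t^{2}+nN^{2}/4$, and $4\mid Nn$ together with $t$ odd (forced by $t^{2}\equiv 1\pmod N$ with $N$ even) and $t^{2}\equiv 1\pmod N$ makes this coprime to each prime dividing $N$; the same inputs give $(\beta')^{2}\equiv 1\pmod{N\O_K}$, so $C'$ has order dividing $2$ in $\mathbf{G}$, while the fixed-point-free assumption excludes $C'=C_0$, leaving $C'$ of order exactly $2$. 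As in (i), Propositions \ref{principal class action}, \ref{Siegel property}(ii) and \ref{Galois action}(ii) yield $g_\F(C')=g_{\left[\begin{smallmatrix}1/2\\ t/N\end{smallmatrix}\right]}(\theta)^{12N}$, and since $e^{\frac{2t}{N}\pi i}=\zeta_N^{\,t}$ the $3N$-th power of the displayed value is $g_\F(C')/g_\F(C_0)$; Proposition \ref{quad relation} places $g_{\left[\begin{smallmatrix}1/2\\ t/N\end{smallmatrix}\right]}(\tau)^{4}\,g_{\left[\begin{smallmatrix}0\\ 1/N\end{smallmatrix}\right]}(\tau)^{-4}$, up to a $\mathbb{Q}(\zeta_N)$-constant, in $\mathcal{F}_N$ (the operative conditions being $N$ even and $t^{2}\equiv 1\pmod N$), so the value lies in $K_{(N)}$. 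Remark \ref{additional case}(i) applies, because the fixed-point-free hypothesis holds and $C'$ has prime order, so Theorem \ref{main theorem2} is valid for $p=2$ and $g_\F(C')/g_\F(C_0)$ generates $K_\F$, hence so does the displayed value; realness comes by taking the fourth power of the real number in Lemma \ref{real generator}(ii), and integrality as in (i). For the last assertion, when $4\mid N$ the quotient $g_{\left[\begin{smallmatrix}1/2\\ t/N\end{smallmatrix}\right]}(\tau)^{2}\,g_{\left[\begin{smallmatrix}0\\ 1/N\end{smallmatrix}\right]}(\tau)^{-2}$ satisfies Proposition \ref{quad relation}, so $e^{\frac{t}{N}\pi i}\,g_{\left[\begin{smallmatrix}1/2\\ t/N\end{smallmatrix}\right]}(\theta)^{2}\,g_{\left[\begin{smallmatrix}0\\ 1/N\end{smallmatrix}\right]}(\theta)^{-2}$ differs from an element of $K_{(N)}$ only by $\zeta_N\in K_{(N)}$, its square is the value of the main part (so it generates $K_{(N)}$), it is real by squaring Lemma \ref{real generator}(ii), and its $6N$-th power is a unit.

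The main obstacle is the bookkeeping around $C'$. In (i) one must ensure that the order of $[(s)]$ is not collapsed on passing from $(\mathbb{Z}/N\mathbb{Z})^{\times}/\{\pm1\}$ to $\mathbf{G}$, i.e.\ that the extra roots of unity in $\mathbb{Q}(\sqrt{-1})$ and $\mathbb{Q}(\sqrt{-3})$ do no harm. In (ii) the conditions $4\mid Nn$, $t^{2}\equiv 1\pmod N$ and $N$ even are precisely what is needed at once to make $\beta'$ invertible modulo $N$ and $(\beta')^{2}$ trivial in $\mathbf{G}$; carrying out these congruences, together with the companion verifications that the Siegel function quotients meet Proposition \ref{quad relation} with the stated exponents $m$, $4$ and $2$, is the fiddly core of the proof.
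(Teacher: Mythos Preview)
Your proof is correct and follows essentially the same three-step approach as the paper: place the relevant Siegel-function quotient in $\mathcal{F}_N$ via Proposition \ref{quad relation}, identify a suitable power with $g_\F(C')/g_\F(C_0)$, and invoke Theorem \ref{main theorem2} (or Remark \ref{additional case}) together with Lemma \ref{real generator}. You supply several verifications the paper leaves implicit---that the order of $[(s)]$ in $\mathbf{G}$ really is $p$ (handling the possible extra units when $K=\mathbb{Q}(\sqrt{-1})$), that $\beta'=(N/2)\theta+t$ is coprime to $N$, and that $C'\neq C_0$ in part (ii)---but the architecture is identical.
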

\begin{proof}
\begin{itemize}
\item[\textup{(i)}]
Let $C'=\big[(s)\big]\in\mathrm{Cl}(\F)$.
Then the order of $C'$ in $\mathrm{Cl}(\F)$ is $p$, and so we see from Theorem \ref{main theorem2} that
\begin{equation*}
K_{(N)}=K\left(\frac{g_\F(C')}{g_\F(C_0)}\right).
\end{equation*}
And, the function 
\begin{equation*}
\frac{g_{\left[\begin{smallmatrix}0\\s/N\end{smallmatrix}\right]}(\tau)^{m}}{g_{\left[\begin{smallmatrix}0\\ 1/N\end{smallmatrix}\right]}(\tau)^{m}}
\end{equation*}
lies in $\mathcal{F}_N$, from which we attain
\begin{equation*}
\gamma=\frac{g_{\left[\begin{smallmatrix}0\\s/N\end{smallmatrix}\right]}(\theta)^{m}}{g_{\left[\begin{smallmatrix}0\\ 1/N\end{smallmatrix}\right]}(\theta)^{m}}\in K_{(N)}
\end{equation*}
by Proposition \ref{complex multiplication} and \ref{quad relation}.
Since $\gamma^{12N/m}={g_\F(C')}/{g_\F(C_0)}$, the special value $\gamma$ also generates $K_{(N)}$ over $K$ as a real algebraic integer by Lemma \ref{real generator}.
Note that $m=N$ always satisfies the condition $m(s^2-1)\equiv 0\pmod{\mathrm{gcd}(2,N)\cdot N}$.
Indeed, if $N$ is even, then $s^2-1$ must be even because $s$ is prime to $N$.

\item[\textup{(ii)}]
Now that 
\begin{equation*}
\left(\frac{N}{2}\theta+t\right)^2\equiv -\frac{Nn}{4}N+ tN\theta+t^2\equiv 1 \pmod{N\O_K},
\end{equation*}
the ray class $C'$ is of order $2$ in $\mathrm{Cl}(\F)$.
Then it follows from Remark \ref{additional case} that
\begin{equation*}
K_{(N)}=K\left(\frac{g_\F(C')}{g_\F(C_0)}\right).
\end{equation*}
And, the function 
\begin{equation*}
e^{ \frac{ 2t }{N}\pi i}\frac{g_{\left[\begin{smallmatrix}1/2\\t/N\end{smallmatrix}\right]}(\tau)^{4}}{g_{\left[\begin{smallmatrix}0\\ 1/N\end{smallmatrix}\right]}(\tau)^{4}}
\end{equation*}
lies in $\mathcal{F}_N$, which yields
\begin{equation*}
\gamma=e^{ \frac{2t}{N} \pi i}\frac{g_{\left[\begin{smallmatrix}1/2\\t/N\end{smallmatrix}\right]}(\theta)^{4}}{g_{\left[\begin{smallmatrix}0\\ 1/N\end{smallmatrix}\right]}(\theta)^{4}}\in K_{(N)}
\end{equation*}
by Proposition \ref{complex multiplication} and \ref{quad relation}.
Since $\gamma^{3N}={g_\F(C')}/{g_\F(C_0)}$, the special value $\gamma$ also generates $K_{(N)}$ over $K$ and is a real algebraic integer by Lemma \ref{real generator}.
\par
In particular, if $4$ divides $N$ then we have
\begin{equation*}
e^{ \frac{ t }{N}\pi i}\frac{g_{\left[\begin{smallmatrix}1/2\\t/N\end{smallmatrix}\right]}(\tau)^{2}}{g_{\left[\begin{smallmatrix}0\\ 1/N\end{smallmatrix}\right]}(\tau)^{2}}\in\mathcal{F}_N
\end{equation*}
by Proposition \ref{quad relation}.
And, in like manner, we get the conclusion.
\end{itemize}
\end{proof}

\begin{remark}
Recently, Jung et al. \cite{J-K-S} proved that the if $N\equiv d_K\equiv 0\pmod{4}$ and $|d_K|\geq 4N^{\frac{4}{3}}$, then the ray class $C'=\left[(({N}/{2})\theta+({N}/{2})+1)\right]\in\mathrm{Cl}(\F)$ satisfies
\begin{equation*}
\left(\frac{g_\F(C')}{g_\F(C_0)}\right)^{\sigma(C)}\neq \frac{g_\F(C')}{g_\F(C_0)}
\end{equation*}
for every $C\in \mathbf{G}\setminus\{1\}$.
Thus if $K_\F\neq K_{\F\P_i^{-n_i}}$ and $|\mathbf{G}_i|>2$ for every $i$, the special value
\begin{equation*}
e^{ (\frac{1 }{2}+\frac{1 }{N})\pi i} \frac{g_{\left[\begin{smallmatrix}1/2\\ 1/2+ 1/N\end{smallmatrix}\right]}(\theta)^2}{g_{\left[\begin{smallmatrix}0\\ 1/N\end{smallmatrix}\right]}(\theta)^2}
\end{equation*}
generates $K_{(N)}$ over $K$ as a real algebraic integer by Theorem \ref{real ray class invariant}.
\end{remark}

\begin{corollary}\label{real ray class invariant2}
With the notations and assumptions as in Theorem \ref{real ray class invariant}, 
let $p$ be an odd prime satisfying $p^2\,|\,N$ \textup{(}if any\textup{)}.
If $p>3$ or $|\mathbf{G}_i|>3$ for every $i$, then the special value
\begin{equation*}
\frac{g_{\left[\begin{smallmatrix}0\\ 1/p+1/N\end{smallmatrix}\right]}(\theta)^m}{g_{\left[\begin{smallmatrix}0\\ 1/N\end{smallmatrix}\right]}(\theta)^m}
\end{equation*}
generates $K_{(N)}$ over $K$ as a real algebraic integer, where
\begin{equation*}
m=\left\{
\begin{array}{ll}
p&\textrm{if $N$ is odd}\\
2p&\textrm{if $N$ is even}.
\end{array}\right.
\end{equation*}

\end{corollary}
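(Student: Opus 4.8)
The plan is to obtain Corollary \ref{real ray class invariant2} directly from Theorem \ref{real ray class invariant}(i) by producing an explicit integer $s$, prime to $N$, whose class $[s]$ in $(\mathbb{Z}/N\mathbb{Z})^\times/\{\pm 1\}$ has order exactly $p$, and noting that the function attached to $s$ in that theorem is the one displayed in the corollary. First I would set $s=1+N/p$. Writing $N=p^{a}M$ with $p\nmid M$ and $a\geq 2$, we have $N/p=p^{a-1}M$, so $s\equiv 1\pmod{p}$ and $s\equiv 1\pmod{M}$, whence $\gcd(s,N)=1$. For the order of $[s]$, the binomial theorem gives $s^{j}\equiv 1+j(N/p)\pmod{N}$ for $1\leq j\leq p$, since every term $\binom{j}{\ell}(N/p)^{\ell}$ with $\ell\geq 2$ is a multiple of $(N/p)^{2}=p^{2a-2}M^{2}$, which is divisible by $N=p^{a}M$ because $a\geq 2$. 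Hence $s^{p}\equiv 1+N\equiv 1\pmod{N}$ while $s^{j}\not\equiv 1\pmod{N}$ for $1\leq j<p$, so $s$ has order $p$ in $(\mathbb{Z}/N\mathbb{Z})^\times$. Since $p$ is odd, the order of $[s]$ in the quotient by $\{\pm 1\}$ divides $p$ and can only drop to $1$ if $s\equiv -1\pmod{N}$; but then $N\mid N/p+2$, which is impossible as $0<N/p+2<N$ (we have $N\geq p^{2}\geq 9$). So $[s]$ has order $p$ in $(\mathbb{Z}/N\mathbb{Z})^\times/\{\pm 1\}$, as required.

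Next I would check that the integer $m$ in the statement satisfies the two hypotheses on the exponent in Theorem \ref{real ray class invariant}(i). From $p^{2}\mid N$ we get $p\mid N$, and when $N$ is even also $2p\mid N$ (as $\gcd(2,p)=1$); thus $m\mid N$ in both cases, and $m\equiv N\pmod{2}$ holds by the definition of $m$. For the congruence $m(s^{2}-1)\equiv 0\pmod{\gcd(2,N)\cdot N}$, write $s^{2}-1=(s-1)(s+1)=(N/p)\bigl(N/p+2\bigr)=2(N/p)+(N/p)^{2}$; the summand $m(N/p)^{2}$ is a multiple of $mN$, hence of $\gcd(2,N)\cdot N$ since $\gcd(2,N)\mid m$, while $2m(N/p)$ equals $2N$ when $N$ is odd ($m=p$) and $4N$ when $N$ is even ($m=2p$), each $\equiv 0\pmod{\gcd(2,N)\cdot N}$. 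Therefore $m(s^{2}-1)\equiv 0\pmod{\gcd(2,N)\cdot N}$.

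Finally, applying Theorem \ref{real ray class invariant}(i) with this $s$ and $m$ — its remaining assumptions $K_{\F}\neq K_{\F\P_{i}^{-n_{i}}}$, $|\mathbf{G}_{i}|>2$ for every $i$, and $p>3$ or $|\mathbf{G}_{i}|>3$ for every $i$, being exactly those carried over from the statement — shows that $g_{\left[\begin{smallmatrix}0\\ s/N\end{smallmatrix}\right]}(\theta)^{m}/g_{\left[\begin{smallmatrix}0\\ 1/N\end{smallmatrix}\right]}(\theta)^{m}$ generates $K_{(N)}$ over $K$ as a real algebraic integer. Since $s/N=(1+N/p)/N=1/p+1/N$, the vectors $\left[\begin{smallmatrix}0\\ s/N\end{smallmatrix}\right]$ and $\left[\begin{smallmatrix}0\\ 1/p+1/N\end{smallmatrix}\right]$ coincide in $\frac{1}{N}\mathbb{Z}^{2}\setminus\mathbb{Z}^{2}$, which is the asserted generator. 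I do not expect a genuine obstacle here: the only delicate points are confirming that $[s]$ keeps order $p$ after passing to the quotient by $\{\pm 1\}$ and checking the two elementary conditions on $m$ in both parities of $N$.
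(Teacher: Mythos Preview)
Your proof is correct and follows essentially the same approach as the paper: both set $s=1+N/p$, verify that $[s]$ has order $p$ via $s^{j}\equiv 1+j(N/p)\pmod{N}$, check the divisibility condition on $m(s^{2}-1)$, and then invoke Theorem~\ref{real ray class invariant}(i). Your version is simply more detailed, in particular making explicit the checks that $\gcd(s,N)=1$, that the order does not drop in the quotient by $\{\pm1\}$, and that $m\mid N$ with $m\equiv N\pmod{2}$.
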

\begin{proof}
Let $s=1+N/p$.
For a positive integer $i$, we have
\begin{equation*}
s^i\equiv 1+\frac{N}{p}i\pmod{N}
\end{equation*}
and hence the ray class $\big[(s)\big]\in\mathrm{Cl}(\F)$ is of order $p$ and $m(s^2-1)\equiv 0\pmod{\mathrm{gcd}(2,N)\cdot N}$.
Therefore, the corollary follows from Theorem \ref{real ray class invariant} \textup{(i)}.
\end{proof}

Now, we are ready to apply the ray class invariants in Theorem \ref{real ray class invariant} to the quadratic Diophantine equations described in Proposition \ref{Diophantine}.

\begin{example}\label{last example}
Let $K=\mathbb{Q}(\sqrt{-1})$, $\theta=\sqrt{-1}$ and $\F=9\O_K$.
Then $3$ is inert in $K$ and hence $|\mathbf{G}_i|>3$ for every $i$.
Since the ray class $C'=\big[(4)\big]$ in $\mathrm{Cl}(\F)$ is of order $3$, the special value
\begin{equation*}
\gamma=\frac{g_{\left[\begin{smallmatrix}0\\4/9\end{smallmatrix}\right]}(\theta)^{3}}{g_{\left[\begin{smallmatrix}0\\ 1/9\end{smallmatrix}\right]}(\theta)^{3}}
\end{equation*}
generates $K_{(9)}$ over $K$ as a real algebraic integer by Corollary \ref{real ray class invariant2}.
Here we observe that $\mathrm{Gal}(K_{(9)}/K)\cong W_{9,\theta}/\ker(\varphi_{9,\theta})$ by Proposition \ref{shimura reciprocity law}, and we have
\begin{equation*}
\begin{array}{rl}
W_{9,\theta}/\ker(\varphi_{9,\theta})=&\bigg\{ 
\footnotesize\left[\begin{matrix}
1&0\\
0&1
\end{matrix}\right],
\left[
\begin{matrix}
0&7\\
2&0
\end{matrix}\right],
\left[
\begin{matrix}
0&5\\
4&0
\end{matrix}\right],
\left[
\begin{matrix}
1&8\\
1&1
\end{matrix}\right],
\left[
\begin{matrix}
1&7\\
2&1
\end{matrix}\right],
\left[
\begin{matrix}
1&6\\
3&1
\end{matrix}\right],
\left[
\begin{matrix}
1&5\\
4&1
\end{matrix}\right],
\left[
\begin{matrix}
1&4\\
5&1
\end{matrix}\right],\vspace{0.2cm}\\
&
\footnotesize\left[\begin{matrix}
1&3\\
6&1
\end{matrix}\right],
\left[
\begin{matrix}
1&2\\
7&1
\end{matrix}\right],
\left[
\begin{matrix}
2&7\\
2&2
\end{matrix}\right],
\left[
\begin{matrix}
2&6\\
3&2
\end{matrix}\right],
\left[
\begin{matrix}
2&5\\
4&2
\end{matrix}\right],
\left[
\begin{matrix}
2&4\\
5&2
\end{matrix}\right],
\left[
\begin{matrix}
2&3\\
6&2
\end{matrix}\right],
\left[
\begin{matrix}
3&5\\
4&3
\end{matrix}\right],\vspace{0.2cm}\\
&
\footnotesize\left[\begin{matrix}
3&4\\
5&3
\end{matrix}\right],
\left[
\begin{matrix}
4&5\\
4&4
\end{matrix}\right]\bigg\},
\end{array}
\end{equation*}
where $\varphi_{9,\theta}$ is the homomorphism stated in Proposition \ref{ray class conjugate} .
Hence we obtain the minimal polynomial $f_9(X)$ of $\gamma$ over $K$ as
\begin{equation*}
{\footnotesize
\begin{array}{ccl}
f_9(X)&=&X^{18} -36X^{17}+234 X^{16}+1086 X^{15} + 2547 X^{14} + 12294 X^{13} +32415 X^{12}+41976 X^{11}\vspace{0.1cm}\\
&&+ 45459 X^{10}+55748 X^{9}+51480 X^{8}+ 22914 X^{7} -1092X^{6}-5310X^{5}-1719 X^{4}+6X^3\vspace{0.1cm}\\
&&+99X^2+18X+1
\end{array}}
\end{equation*}
and so we achieve $\mathrm{disc}\big(f_9(X)\big)=2^{54}\cdot 3^{135}\cdot 127^6\cdot 827^2$.
On the other hand, an odd prime $p$ satisfies $(\frac{-1}{p})=1$ if and only if $p\equiv1\pmod{4}$.
Therefore, if  $p\neq 2,3,127,827$, we see by Proposition \ref{Diophantine} that a prime $p$ can be expressed as $p=x^2+y^2$ for some $x,y\in\mathbb{Z}$ with conditions $x\equiv 1\pmod{9}$, $y\equiv 0\pmod{9}$ if and only if $p\equiv1\pmod{4}$ and $f_9(X)\equiv 0\pmod{p}$ has an integer solution.

\end{example}

\begin{example}\label{last example2}
Let $K=\mathbb{Q}(\sqrt{-5})$, $\theta=\sqrt{-5}$ and $\F=4\O_K$. 
Then $2$ is ramified in $K$ and so $|\mathbf{G}_i|>2$ for every $i$.
In a similar way as in Example \ref{even example} one can show that the ray class $C'=\big[(2\sqrt{-5}+1)\big]\in\mathrm{Cl}(\F)$ satisfies
\begin{equation*}
\left(\frac{g_\F(C')}{g_\F(C_0)}\right)^{\sigma(C)}\neq \frac{g_\F(C')}{g_\F(C_0)}
\end{equation*}
for every $C\in \mathbf{G}\setminus\{1\}$.
Thus the special value
\begin{equation*}
\gamma=e^{ \frac{ \pi i }{4}}\frac{g_{\left[\begin{smallmatrix}1/2\\1/4\end{smallmatrix}\right]}(\theta)^{2}}{g_{\left[\begin{smallmatrix}0\\ 1/4\end{smallmatrix}\right]}(\theta)^{2}}
\end{equation*}
generates $K_{(4)}$ over $K$ as a real algebraic integer by Theorem \ref{real ray class invariant}.
And, we get the minimal polynomial $f_4(X)$ of $\gamma$ over $K$ as follows:
\begin{equation*}
f_4(X)=X^{8}+16X^{7}-12X^{6}+16X^{5}+38X^{4}-16X^{3}-12X^{2}-16X+1.
\end{equation*}
On the other hand, the discriminant of $f_4(X)$ is $2^{68}\cdot 5^4$ and we derive that an odd prime $p$ satisfies $(\frac{-5}{p})=1$ if and only if
$p\equiv1,3,7,9\pmod{20}$.
Therefore, if $p\neq 2,5$ we conclude that a prime $p$ can be written in the form $p=x^2+5y^2$ for some $x,y\in\mathbb{Z}$ with conditions $x\equiv 1 ~(\bmod{~4}),~ y\equiv 0~(\bmod{~4})$ if and only if $p\equiv1,3,7,9\pmod{20}$ and $X^{8}+16X^{7}-12X^{6}+16X^{5}+38X^{4}-16X^{3}-12X^{2}-16X+1 \equiv0\pmod{p}$ has an integer solution.

\end{example}

\bibliographystyle{amsplain}

\address{
Ja Kyung Koo\\
Department of Mathematical Sciences \\
KAIST \\
Daejeon 305-701 \\
Republic of Korea} {jkkoo@math.kaist.ac.kr}
\address{
Dong Sung Yoon\\
National Institute for Mathematical Sciences \\
Daejeon 305-811 \\
Republic of Korea} {dsyoon@nims.re.kr}

\end{document}